\newtheoremstyle{plainsl}%
	{\topsep}
	{\topsep}
	{\slshape} 
	{}
	{\normalfont\bfseries}
	{.}
	{ }
	{}
\theoremstyle{plainsl}
\newtheorem{Theorem}{Theorem}
\newtheorem{Lemma}[Theorem]{Lemma}
\newtheorem{Cor}[Theorem]{Corollary}
\newtheorem*{Example}{Example}
\newtheorem*{Remark}{Remark}
\def\sqr#1#2{{\vbox{\hrule height.#2pt
    \hbox{\vrule width.#2pt height#1pt \kern#1pt
        \vrule width.#2pt}\hrule height.#2pt}}}
\def\eqed{\sqr{7}{7}}
\def\qedd{%
    \ifmmode\eqno\eqed
    \else\nobreak\ \hfill\eqed\medbreak\fi}
\numberwithin{equation}{section}
\newcommand\comp[1]{{\mkern2mu\overline{\mkern-2mu#1}}}
\renewcommand{\epsilon}{\varepsilon}
\DeclareMathOperator\Tr{Tr}
\DeclareMathOperator{\diag}{diag}
\newcommand{\xbox}{\boxtimes}
\newcommand{\arxiv}[1]{\href{https://arxiv.org/abs/#1}{\texttt{arXiv:#1}}}
\title{Spectral approaches for \texorpdfstring{$d$-improper}{d-improper} chromatic number}
\author{Krystal Guo \and Ross J. Kang \and Gabri\"elle Zwaneveld}
\date{11 November 2024}
\begin{document}

\maketitle

\begingroup
    \renewcommand\thefootnote{}
     \footnotetext[0]{Korteweg-de Vries Institute for Mathematics, University of Amsterdam, Amsterdam, The Netherlands \texttt{\{k.guo,r.kang,g.c.zwaneveld\}@uva.nl}.  RJK was partially supported by the Gravitation Programme NETWORKS (024.002.003) of NWO.  }
\endgroup

\begin{abstract}

In this paper, we explore algebraic approaches to $d$-improper and $t$-clustered colourings, where the colouring constraints are relaxed to allow some monochromatic edges. 
Bilu [J.~Comb.~Theory Ser.~B, 96(4):608-613, 2006] proved a generalization of the Hoffman bound for $d$-improper colourings. We strengthen this theorem by characterizing the equality case. In particular, if the Hoffman bound is tight for a graph $G$, then the $d$-improper Hoffman bound is tight for the strong product $G \boxtimes K_{d+1}$. Moreover, we prove d-improper analogous for the inertia bound by Cvetkovíc and the multi-eigenvalue lower bounds of Elphick and Wocjan.

We conjecture an equality between the chromatic number of a graph $G$ and the \linebreak $d$-improper chromatic number of its strong product with a complete graph, $G \boxtimes K_{d+1}$, and prove the conjecture in special graph classes, including perfect graphs and graphs with chromatic number at most 4. Other supporting evidence for the conjecture includes a fractional analogue, a clustered analogue, and various spectral relaxations of the equality. 

  \noindent\textit{Keywords: graph colouring, eigenvalue bounds, improper colouring, clustered colouring}

  \noindent\textit{MSC 2020: Primary 05C15; Secondary 05C50, 05C76} 
\end{abstract}

\section{Introduction}
Improper and clustered colourings are both natural relaxations of proper colouring in that they allow monochromatic edges but only in some controlled fashion.
The first, called \textsl{improper (or defective) colouring}, permits a prescribed maximum degree in the monochromatic subgraphs. The second, called \textit{clustered colouring}, permits a prescribed maximum component size in the monochromatic subgraphs.

The idea of ``tolerating'' a controlled amount of error in proper colourings arises naturally in certain optimization settings. It occurs in the applied contexts of ad hoc radio communication networks~\cite{HKS09} and circuit design~\cite{Ait18phd}. Such notions have also been proven invaluable for the design of better distributed algorithms~\cite{BaEl13book}.
Within graph theory, such colourings have been studied going back to a paper of Lov\'asz in 1966~\cite{Lov66}. They have been considered in structural contexts~\cite{EKKOS15,HeWo18} and probabilistic/extremal contexts~\cite{ADOV03,KaMc10,GISW24}. See~\cite{Woo18survey} for a comprehensive recent survey.

The contribution of this paper is to leverage the use of algebraic methods here. 
We are partially motivated by recent interest in the corresponding notions for independent set of these colourings for various classes of symmetric graphs. This is a setting where the development of new algebraic methods may be helpful. A famous example is related to the so-called Sensitivity Conjecture: Huang's resolution of this conjecture showed an upper bound, in the Boolean hypercube, on the size of vertex subsets inducing some prescribed maximum degree. We refer to a preprint by Potechin and Tsang~\cite{PoTs24+} for an overview of some recent efforts to extend this result to other symmetric graphs. Another example is the treatment by Esperet and Wood~\cite{EsWo23+} of such colourings for certain strong products, motivated in part by product structure theory and several other interesting connections to other parts of mathematics, see~\cite[Subs.~1.2--1.7]{EsWo23+}.
These recent results compel a systematic exploration of algebraic methods for improper and clustered colourings.

Throughout the paper, we use $\chi^d(G)$ to denote the {\em $d$-improper chromatic number} of $G$, the least number of parts in a partition of the vertex set of $G$ so that each part induces a graph of maximum degree at most $d$. Note that $\chi^0(G)$ coincides with the usual chromatic number $\chi(G)$ of $G$.

Before discussing the contributions of this paper, we highlight a known generalization by Bilu~\cite{BILU2006608} of the well-known chromatic number bound of Hoffman~\cite{Hof70} (the classic result is the $d=0$ specialization below).
We write $\lambda_1 \geq \lambda_2 \geq \ldots \geq \lambda_n$ for the eigenvalues of the adjacency matrix of $G$. 
\begin{Theorem}[Bilu~\cite{BILU2006608}] \label{generalizedHoffmanbound}
  For every integer $d\geq 0$ and every graph $G$, we have \[\chi^d(G) \geq \frac{\lambda_1- \lambda_n}{d-\lambda_n} = 1- \frac{\lambda_1-d}{\lambda_n-d}.\]
\end{Theorem}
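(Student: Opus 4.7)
The plan is to adapt Hoffman's original proof of the chromatic number bound, replacing the fact that a proper colour class is independent (so the corresponding principal submatrix of $A$ is zero) by the weaker fact that a $d$-improper colour class induces a graph of maximum degree at most $d$, which bounds its spectral radius by $d$. Throughout, let $A$ be the adjacency matrix of $G$, and assume $G$ has at least one edge (the bound is trivial otherwise), so $\lambda_n < 0$.

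The key object is the positive semidefinite matrix $B := A - \lambda_n I$, whose eigenvalues $\lambda_i - \lambda_n$ are nonnegative. Let $v$ be a positive unit Perron eigenvector of $A$ for $\lambda_1$, so that
\[
v^T B v = \lambda_1 - \lambda_n.
\]
Given any $d$-improper colouring with colour classes $V_1,\dots,V_k$, decompose $v = \sum_{i=1}^k v_i$ where $v_i$ is the restriction of $v$ to $V_i$ (and zero elsewhere). Since $G[V_i]$ has maximum degree at most $d$, the principal submatrix $A[V_i]$ has spectral radius at most $d$, and hence
\[
v_i^T A v_i \;=\; v_i^T A[V_i]\, v_i \;\leq\; d\,\|v_i\|^2,
\]
which gives $v_i^T B v_i \leq (d-\lambda_n)\|v_i\|^2$.

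To control the cross terms $v_i^T B v_j$, I would apply the Cauchy--Schwarz inequality for the PSD form $B$, namely $v_i^T B v_j \leq \sqrt{(v_i^T B v_i)(v_j^T B v_j)}$, followed by the ordinary Cauchy--Schwarz over the $k$ colour classes:
\[
v^T B v \;=\; \sum_{i,j} v_i^T B v_j \;\leq\; \Bigl(\sum_{i=1}^k \sqrt{v_i^T B v_i}\Bigr)^{\!2} \;\leq\; k\sum_{i=1}^k v_i^T B v_i.
\]
Combining with the per-class bound and using $\sum_i \|v_i\|^2 = \|v\|^2 = 1$ yields
\[
\lambda_1-\lambda_n \;=\; v^T B v \;\leq\; k(d-\lambda_n),
\]
which rearranges to the claimed inequality $\chi^d(G) \geq (\lambda_1-\lambda_n)/(d-\lambda_n)$.

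The only genuine subtlety is the step that handles the off-diagonal contributions $v_i^T B v_j$: the diagonal estimate follows immediately from the improper-colouring hypothesis via the max-degree bound on the spectral radius of $A[V_i]$, but the cross terms need the PSD Cauchy--Schwarz together with the standard Cauchy--Schwarz to be converted into a multiple of $\sum_i v_i^T B v_i$. With $B = A - \lambda_n I$ chosen so as to be PSD, and with $v$ chosen to be positive, both inequalities apply cleanly and give the desired bound.
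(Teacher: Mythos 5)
Your argument is correct, but it takes a genuinely different route from the paper. The paper follows Haemers-style eigenvalue interlacing: it forms the weighted quotient-type matrix $C$ with entries $x_i^TAx_j/\lVert x_i\rVert^2$ built from the Perron vector restricted to the colour classes, observes that $\lambda_1$ is an eigenvalue of $C$ with eigenvector $\mathbb{1}$, bounds $\Tr(C)\le dm$ using the same spectral-radius estimate on each block that you use, and then applies Cauchy interlacing to get $dm \ge \lambda_1+(m-1)\lambda_n$. You instead work directly with the positive semidefinite matrix $B=A-\lambda_n I$ and control the cross terms $v_i^TBv_j$ by the Cauchy--Schwarz inequality for the semi-inner product induced by $B$, followed by ordinary Cauchy--Schwarz over the $k$ classes; this is the classical quadratic-form proof of the Hoffman bound adapted to the $d$-improper setting, and every step checks out (note that positivity of $v$ is not actually needed anywhere in your argument, only that $v$ is a unit $\lambda_1$-eigenvector). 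Your proof is more elementary and self-contained, avoiding interlacing entirely. What the paper's choice buys is the machinery for the rest of its Section 3.1: tightness of interlacing and the explicit form of $C$ are exactly what drive the equality characterization (multiplicity of $\lambda_n$, weight-regularity, equitable partitions in the regular case). Extracting the same equality conditions from your proof is possible -- equality forces each class to satisfy $v_i^TBv_i=(d-\lambda_n)\lVert v_i\rVert^2$ and forces the vectors $Bv_i$ to be pairwise parallel with all $\lVert v_i\rVert$ equal -- but it is less direct than reading them off from tight interlacing.
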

\noindent
In this vein of research, one of our contributions is to generalize other spectral bounds to $d$-improper colourings in Section \ref{Section: spectral bounds}. 

To grasp the Bilu bound better, we define a \textsl{$d$-improper Hoffman colouring} to be a vertex partition of $G$ into parts inducing maximum degree $d$ having $1- \frac{\lambda_1-d}{\lambda_n-d}$ parts, so that $G$ exactly attains the bound. For $d=0$, such colourings have already been long known to be rather special; for any fixed integer $m\ge 0$, there are only finitely many strongly regular graphs that admit a ($0$-improper) Hoffman colouring with $m$ colours~\cite{Hae79thesis}. Since the collection of subsets inducing subgraphs of maximum degree $d>0$ is richer than the collection of independent sets, it is intuitive to expect that a characterization of $d$-improper Hoffman colourings is more challenging for any $d>0$ than for the classic case $d=0$.

To this end, we consider the strong product\footnote{
The \textsl{strong product} $G \boxtimes H$ is the graph on vertex set $V(G)\times V(H)$ where $(u,u')(v,v')$ is an edge of $G \boxtimes H$ if (i) $u=v$ and $u'$ is adjacent to $v'$ in $H$, (ii) $u'=v'$ and $u$ is adjacent to $v$ in $G$, or (iii) $u$ is adjacent to $v$ in $G$ and $u'$ is adjacent to $v'$ in $H$.
} of $G$ with the complete graph $K_{d+1}$ on $d+1$ vertices, denoted $G \boxtimes K_{d+1}$. We show in Lemma \ref{Lem: construct Hoffman colourable} that $G \boxtimes K_{d+1}$ admits a $d$-improper Hoffman colouring when $G$ admits a proper Hoffman colouring. Incidentally, we also find that for such graphs $G$ we have $\chi(G) = \chi^d(G \boxtimes K_{d+1})$. In Section \ref{Section: Conjecture}, we dwell on the tantalizing prospect that moreover equality does hold for all $G$ as stated in the next conjecture:

\begin{restatable}{Con}{conjecturedimproper}\label{conjecture d improper} For every graph $G$ and every integer $d \geq 0$, we have
\begin{equation}\label{equation conjecture} \chi(G) = \chi^d(G \boxtimes K_{d+1}).\end{equation}
\end{restatable}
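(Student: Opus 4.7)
The direction $\chi^d(G \boxtimes K_{d+1}) \leq \chi(G)$ is immediate: lift any proper $\chi(G)$-colouring $c$ of $G$ via $\tilde c(v,i) := c(v)$, so every colour class of $\tilde c$ is a vertex-disjoint union of cliques $K_{d+1}$ and each vertex has exactly $d$ same-colour neighbours. The content of the conjecture is therefore the reverse inequality, and this is what I would focus on.

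Suppose we have a $d$-improper colouring $c\colon V(G)\times[d+1]\to[k]$. For each $v\in V(G)$ and colour $i$, write $n_i(v)=|\{j:c(v,j)=i\}|$, so $\sum_i n_i(v)=d+1$. Counting same-colour neighbours of any $(u,j)$ with $c(u,j)=i$ shows that for every edge $uv\in E(G)$ and every colour $i$,
\[
  n_i(u)+n_i(v)\leq d+1.
\]
Normalising by $d+1$, the vectors $x_v=(n_i(v)/(d+1))_i\in\mathbb{R}^k$ form a feasible point of the fractional chromatic LP of $G$ with $k$ colours, so $\chi_f(G)\leq k$. This already establishes the fractional analogue of the conjecture, but it is not strong enough in general, since $\chi$ can exceed $\chi_f$.

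To upgrade to the integer equality, the plan is to exploit extra combinatorial content. The key additional fact is that for every clique $Q$ of $G$, the subgraph $Q\times[d+1]$ of $G\boxtimes K_{d+1}$ is $K_{|Q|(d+1)}$, and in any $d$-improper colouring of a complete graph each colour class has size at most $d+1$. Hence the restriction of $c$ to $Q\times[d+1]$ uses at least $|Q|$ colours, yielding $k\geq\omega(G)$. This immediately settles the conjecture for perfect graphs, where $\chi=\omega$. Combined with the fractional bound it handles $\chi(G)\leq 2$ (since $\chi_f\leq 2$ detects bipartiteness) and should, using the structural theory of $3$- and $4$-chromatic graphs together with the additional constraint $n_i(u)+n_i(v)\leq d+1$ for adjacent pairs, close the cases $\chi(G)\in\{3,4\}$ too.

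The main obstacle is the general case. When $\chi(G)$ greatly exceeds both $\omega(G)$ and $\chi_f(G)$, as occurs for Kneser graphs and typical random graphs, neither the clique bound nor the LP relaxation is sharp enough to force $k\geq\chi(G)$. Closing the gap appears to require either a constructive extraction of a proper $k$-colouring of $G$ that uses the integrality of the profiles $(n_i(v))_i\in\frac{1}{d+1}\mathbb{Z}^k$ rather than just their LP feasibility, or an obstruction-theoretic argument showing that no $d$-improper $k$-colouring of $G\boxtimes K_{d+1}$ can exist when $k<\chi(G)$. Identifying such a mechanism is, I expect, the principal difficulty, and presumably the reason the statement is offered as a conjecture rather than a theorem.
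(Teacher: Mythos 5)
This statement is offered in the paper as a conjecture, and the paper does not prove it in general either; what it provides is the easy inequality plus a collection of special cases and relaxations. Your easy direction (lifting a proper colouring) and your clique argument ($\omega(G\boxtimes K_{d+1})=(d+1)\,\omega(G)$, so $k\ge\omega(G)$, settling perfect graphs) are correct and coincide with the paper's. You also correctly identify where the real difficulty lies. However, two of your intermediate claims have genuine gaps. First, the fractional step: from $n_i(u)+n_i(v)\le d+1$ on edges you conclude that the normalised profiles are ``a feasible point of the fractional chromatic LP,'' but the fractional chromatic number requires each colour class to lie in the stable set polytope (a convex combination of independent sets), not merely to satisfy the edge constraints $x_{u,i}+x_{v,i}\le 1$; the latter polytope is strictly larger (e.g.\ the all-$\tfrac12$ vector on a clique satisfies all edge constraints), so $\chi_f(G)\le k$ does not follow from what you verified. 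The paper gets the fractional analogue by a different route: each colour class of a $d$-improper colouring has maximum degree at most $d$, hence is properly $(d+1)$-colourable, giving $\chi_{d+1}(G)\le (d+1)k$ and therefore $\chi_f(G)\le \chi_{d+1}(G)/(d+1)\le k$ (and, with more work, the full equality $\chi_f^d(G\boxtimes K_{d+1})=\chi_f(G)$).

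Second, your treatment of $\chi(G)\in\{3,4\}$ is an assertion, not an argument: there is no off-the-shelf ``structural theory of $3$- and $4$-chromatic graphs'' that combines with the edge inequality to close these cases. The paper's actual proof for $\chi(G)\le 4$ is a minimal-counterexample analysis of the colour sets $c\{v\}=\{c(v,i): i\}$: a pigeonhole count shows $|c\{v\}|=3$ forces $\deg_G(v)\le 2$, vertices with $|c\{v\}|=1$ can be deleted, and in the remaining case $|c\{v\}|=2$ for all $v$ with adjacent vertices receiving distinct $2$-sets, from which a proper $3$-colouring of $G$ is read off by a fixed map on $2$-subsets of $\{1,2,3\}$. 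This is a concrete mechanism of exactly the kind you say is missing, and you would need to supply it (or something equivalent) to claim those cases. You also omit the paper's other supporting evidence, notably the clustered analogue $\chi(G)=\chi^{\underline{d+1}}(G\boxtimes K_{d+1})$, proved via an auxiliary incidence graph and a cycle-elimination/recolouring argument, which in particular settles the conjecture for $d=1$.
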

\noindent
The difficulty of this conjecture is to prove $\chi^d(G \boxtimes K_{d+1}) \geq \chi(G)$. To see the converse inequality $\chi(G) \ge \chi^d(G \boxtimes K_{d+1})$, consider any proper colouring $c : V(G) \to \mathbb{N}$ of $G$ and construct a $d$-improper colouring of $G \boxtimes K_{d+1}$ by using $c(v)$ on $(v,i)$, for each $i$. This is illustrated in Figure \ref{Fig: Lifted Colourings}. Since not every $d$-improper colouring of $G \boxtimes K_{d+1}$ comes from a proper colouring of $G$, it is far from obvious how a similar argument could be used for the converse. 

\begin{figure}
    \centering
    \begin{subfigure}[t]{0.3 \linewidth}
        \centering
        \raisebox{27 pt}{\includegraphics[width=0.6\linewidth,page= 2]{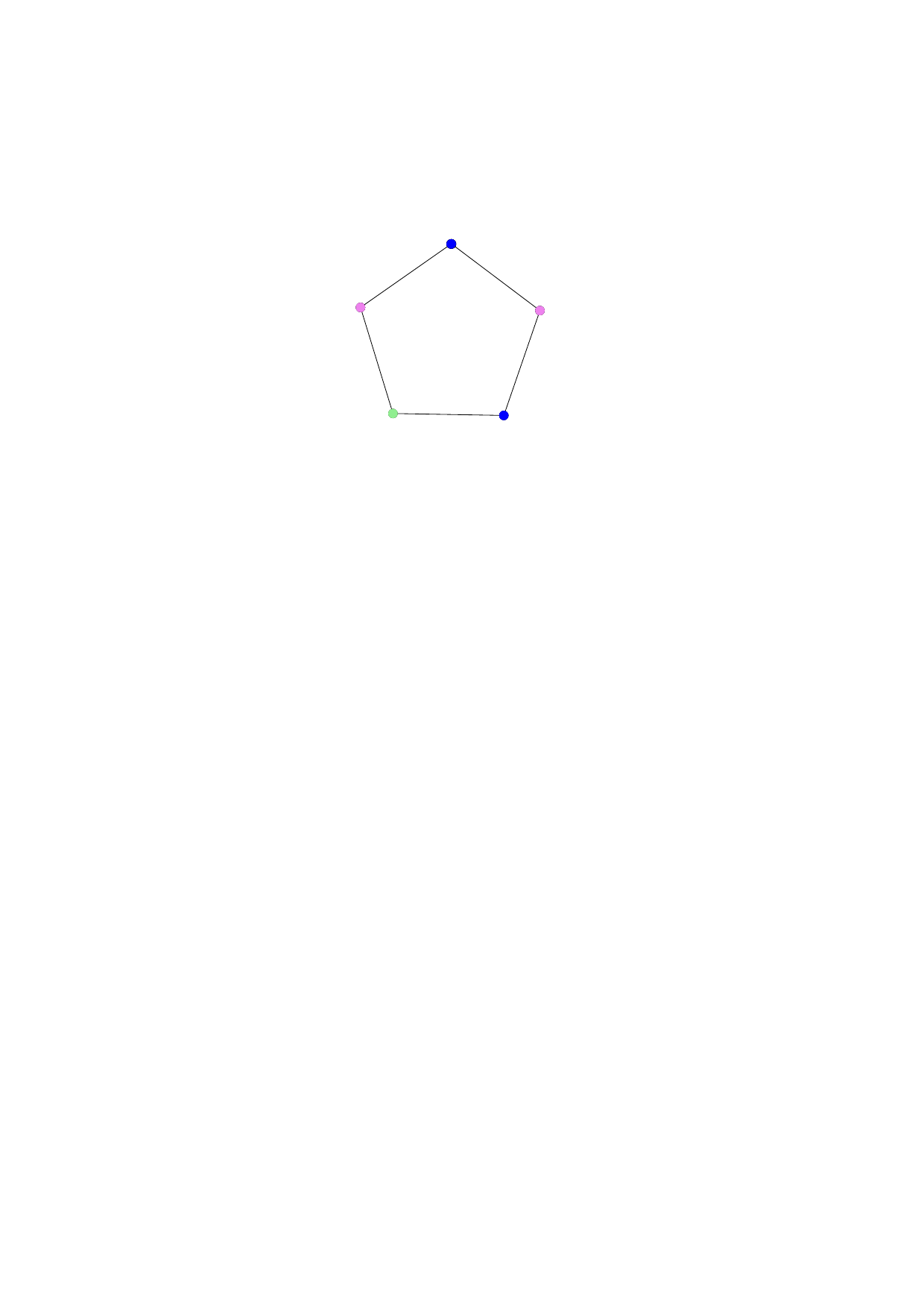}}
         \caption{Colouring of $C_5$}\label{C_5 colouring}
    \end{subfigure}\hfill
    \begin{subfigure}[t]{0.3 \linewidth}
        \centering
        \includegraphics[width=0.95\linewidth,page= 4]{LiftedColorings.pdf}
         \caption{The lifted $2$-improper colouring of $C_5 \boxtimes K_3$ from the colouring of $C_5$ in (a).}
    \end{subfigure}\hfill
    \begin{subfigure}[t]{0.3 \linewidth}
        \centering
        \includegraphics[width=0.95\linewidth,page= 6]{LiftedColorings.pdf}
         \caption{A $2$-improper colouring of $C_5 \boxtimes K_3$ that is not lifted from any colouring of $C_5$}
    \end{subfigure}
    \caption{Colourings of $G$ can be lifted to $d$-improper colourings of $G \boxtimes K_t$, but not all $d$-improper colourings of $G \boxtimes K_{d+1}$ are lifted from some colouring of $G$. In the black and white version, the colours are represented by different shades of gray.}\label{Fig: Lifted Colourings}
\end{figure}

A main part of our contribution is work supporting this conjecture. In particular, we have shown the conjecture to be true in the following special cases:
\begin{itemize}
\item $G$ has chromatic number at most $4$, which includes any planar $G$ (Lemma \ref{Lem: Pjotr max degree 4});
\item $G$ is a perfect graph; and 
\item $G$ attains equality in some of the  spectral bounds that we generalize in Section~\ref{Section: spectral bounds}.
\end{itemize}
Moreover, we have shown two weaker forms of Conjecture~\ref{conjecture d improper}:
\begin{itemize}
\item a fractional analogue: $\chi_f(G) = \chi_f^d(G \boxtimes K_{d+1})$, where $\chi_f^d$ is a natural fractional relaxation of the $d$-improper chromatic number (Lemma \ref{Cor frac makkelijk}); and 
\item a clustered analogue: $\chi(G) = \chi^{\underline{d+1}}(G \boxtimes K_{d+1})$, where $\chi^{\underline{t}}(G)$ denotes the least number of parts in a partition of the vertex set of $G$ so that each part induces components of size at most $t$ (Corollary \ref{cluster strong}).
\end{itemize}

We now discuss the organization of the paper. We give initial observations about $d$-improper colourings and strong products in Section \ref{Section: Preliminary results}. Section~\ref{Section: spectral bounds} 
 contains our generalizations of several well-known eigenvalue-based chromatic number lower bounds to the $d$-improper setting. 
In addition to an extended version of Bilu's Theorem that characterizes $d$-improper Hoffman colourings, we make detailed explorations for $d$-improper analogues of Cvetkovi\'{c}'s inertia bound~\cite{cvetkovic1972chromatic}, eigenvalue bounds discovered by Elphick and Wocjan \cite{wocjan2013new, elphick2015unified}.  
The development of Conjecture \ref{conjecture d improper} can be found in Section \ref{Section: Conjecture}. In particular, Section \ref{Section: Conjecture} contains the proof of the clustered version (Corollary \ref{cluster strong}).

In the remainder of this section, we will rigorously establish our definitions and preliminary remarks. 

\subsection{Definitions}\label{Section:Def}
A \textsl{(vertex-)colouring} of $G$ is a function $f: V(G) \rightarrow \mathbb{Z}^+$ where the image elements are referred to as \textsl{colours}. 
A colouring is called \textsl{proper} if any two adjacent vertices are assigned distinct colours.
The \textsl{chromatic number} of a graph $G$, denoted $\chi(G)$, is the least number of colours in a proper colouring of $G$. 

As mentioned in the introduction, our focus is the notion of a colouring that is \textsl{$d$-improper} (also \textsl{$d$-defective}). A colouring is $d$-improper if each vertex $v$ has at most $d$ neighbours of the same colour as $v$, or, equivalently, every monochromatic subgraph has maximum degree at most $d$. Notice that a $0$-improper colouring is a proper colouring. The \textsl{$d$-improper chromatic number}  $\chi^d(G)$ of a graph $G$ is the least number of colours in a $d$-improper colouring of $G$. It follows immediately that, if $s \geq t$, then
\[\chi^s(G) \leq \chi^t(G) \leq \chi(G)\] 
for all graphs $G$.

Another notion that we study is clustered colouring. A colouring is \textsl{$t$-clustered} if every monochromatic component contains at most $t$ vertices. The \textsl{$t$-clustered chromatic number} of a graph $G$, denoted $\chi^{\underline{t}}(G)$, is the least number of colours in a $t$-clustered colouring of $G$. A $1$-clustered colouring is a proper colouring. It follows immediately that, if $s \geq t$, then \[\chi^{\underline{s}}(G) \leq \chi^{\underline{t}}(G) \leq \chi(G)\] for all graphs $G$. Since every monochromatic component of $t$ vertices has maximum degree $t-1$, every $t$-clustered colouring is a $t-1$-improper colouring, and so, for all $G$, 
\[\chi^{\underline{t}}(G) \geq \chi^{t-1}(G).\]

To illustrate these concepts, Figure 2 contains examples of a proper, a $2$-improper and a $3$-clustered colouring of the Petersen graph.

\begin{figure}[ht]
    \centering
    \begin{subfigure}[t]{0.3 \linewidth}
        \centering
  {\includegraphics[width=0.95\linewidth,page= 1]{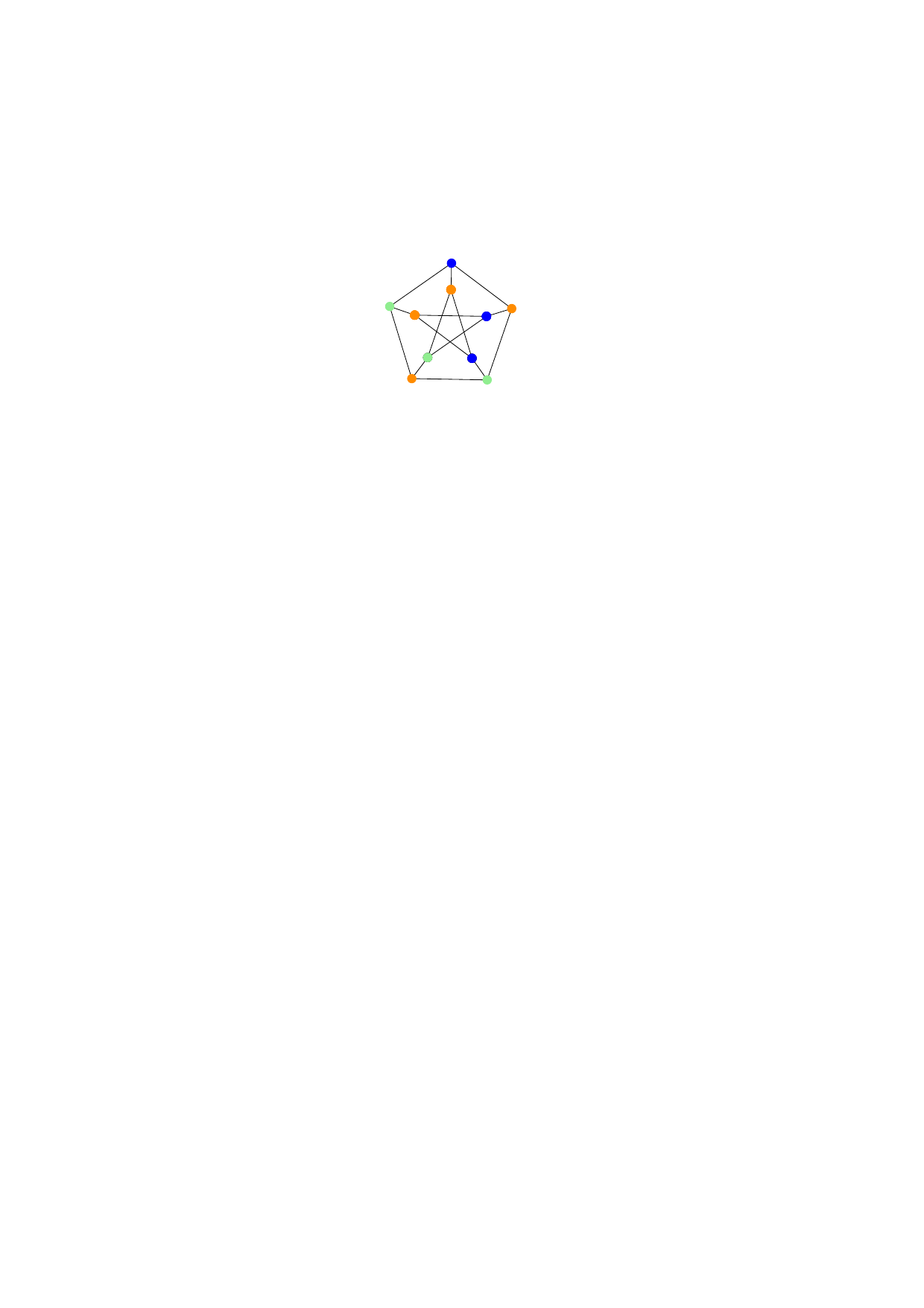}}
         \caption{A proper colouring of the Petersen graph.}\label{Petersen colouring}
    \end{subfigure}\hfill
    \begin{subfigure}[t]{0.3 \linewidth}
        \centering
        \includegraphics[width=0.95\linewidth,page= 2]{Petersengraph.pdf}
         \caption{A $2$-improper colouring of the Petersen graph.}
    \end{subfigure}\hfill
    \begin{subfigure}[t]{0.3 \linewidth}
        \centering
        \includegraphics[width=0.95\linewidth,page= 3]{Petersengraph.pdf}
         \caption{A $3$-clustered colouring of the Petersen graph.}
    \end{subfigure}
    \caption{Different colourings of the Petersen graph. In the black and white version, the colours are represented by different shades of gray.\label{Fig: colourings Petersen graph}}
\end{figure}

 In contrast to proper colourings, $d$-improper and $t$-clustered colourings are not easily described by the existence of graph homomorphisms from $G$ to a target graph.  Thus, some of the standard techniques for determining the chromatic number are not easily adaptable here.

A \textsl{clique} is a set of pairwise adjacent vertices and $\omega(G)$ denotes the maximum size of a clique in $G$. Since at most $d+1$ vertices in a clique can have the same colour in a $d$-improper colouring, we have that \begin{equation}\label{first inequality d-improper}
  \chi^{\underline{d+1}}(G) \geq \chi^d(G) \geq \frac{\omega(G)}{d+1}.
\end{equation}
Writing $\Delta(G)$ for the maximum degree of a graph, a notable upper bound on the $d$-improper chromatic number by Lov\'{a}sz \cite{Lov66} is the following:
\[\chi^d(G) \leq \left\lceil \frac{\Delta(G)+1}{d+1}\right \rceil.\]
We will give more bounds on the $d$-improper chromatic number in Section \ref{Section: spectral bounds}.

\medskip
As mentioned in the introduction, we are particularly interested in the $d$-improper and $(d+1)$-clustered chromatic numbers for the strong product $\chi(G \boxtimes K_{d+1})$ of the graph $G$ with $K_{d+1}$.
Recall that a \textsl{$b$-fold colouring} of a graph $G$ is a function $f: V(G) \rightarrow \binom{C}{b}$, i.e.~every vertex is assigned a subset of size $b$ of the colour set $C\subseteq \mathbb{Z}^+$. Such a colouring is proper if sets assigned to two adjacent vertices are disjoint. The \textsl{$b$-fold chromatic number} of a graph $G$, denoted $\chi_b(G)$, is the minimum number of colours in a proper $b$-fold colouring of $G$. Since $\chi(G \boxtimes K_b) = \chi_b(G)$, it is natural, when discussing such products, to also consider the $d$-improper and $t$-clustered analogues of the $b$-fold chromatic number.  

A $b$-fold colouring is \textsl{$d$-improper} if the graph induced by the vertices having any colour $x$ in their set has maximum degree at most $d$. Similarly, a $b$-fold colouring is \textsl{$t$-clustered} if the graph induced by the vertices having any colour $x$ in their set has component size at most $t$. The \textsl{$b$-fold $d$-improper chromatic number} and \textsl{$b$-fold $t$-clustered chromatic number} will be denoted by $\chi_b^d(G)$ and  $\chi_b^{\underline{t}}(G)$, respectively.

\begin{Remark}
In general, the equalities $\chi_b^d(G) = \chi^d(G \boxtimes K_b)$ and $\chi_b^{\underline{t}}(G)=\chi^d(G \boxtimes K_b)$ do not hold for all graphs $G$. For $v \in V(G)$, while the colours of $(v, i)$ may be the same for different $i$ in $G \boxtimes K_b$, the colours in the set assigned to $v$ should all be distinct. It follows, for example, that $\chi^1(C_4 \boxtimes K_2)=2$, but $\chi_2^1(C_4)>2$.
\end{Remark}

Recall that the \textsl{fractional chromatic number} of a graph $G$, denoted by $\chi_f(G)$, is defined by \[\chi_f(G) = \inf\left\{ \frac{\chi_b(G)}{b} : b \in \mathbb{N}\right\}.\] 
Analogously, the \textsl{$d$-improper fractional chromatic number} $\chi_f^d(G)$ and the \textsl{$t$-clustered fractional chromatic number} are defined by
\[\chi^d_{f}(G) := \inf\left\{ \frac{\chi^d_b(G)}{b} : b \in \mathbb{N}\right\} \quad \text{ and } \quad \chi^{\underline{t}}_{f}(G) := \inf\left\{ \frac{\chi^{\underline{t}}_b(G)}{b} : b \in \mathbb{N}\right\}.\]

\subsubsection*{Note on nomenclature}
In the  literature, authors use various terms and notation for  improper and clustered colourings.
For convenience, we have included a table with our chosen notation.

\begin{table}[ht]
 \centering
 \begin{tabular}{c|l}
  $\chi(G)$ &  chromatic number of a graph $G$ \\
  $\chi_b(G)$ &  $b$-fold chromatic number of a graph $G$\\
  $\chi_f(G)$ &  fractional chromatic number of a graph $G$ (only with an $f$)\\
  $\chi^d(G)$ &  $d$-improper chromatic number of a graph $G$\\
  $\chi^{\underline{t}}(G)$ &  $t$-clustered chromatic number of a graph $G$\\
 \end{tabular}
 \caption{Nomenclature of different chromatic numbers, as used in this paper.}
 \label{tab: notation}
\end{table}
\noindent
Other colouring parameters are analogously defined. For example, $\chi_f^d(G)$ denotes the fractional $d$-improper chromatic number of a graph $G$. 

\section{Preliminary observations}\label{Section: Preliminary results}

In this section, we will relate the $d$-improper chromatic number and the $d+1$-clustered chromatic number of the graph $G \boxtimes K_{d+1}$ to the chromatic number and the fractional chromatic number of $G$.
While many of the observations here are straightforward and/or known, they are useful and provide important context. We start by bounding $\chi^d(G \boxtimes K_{d+1})$ in terms of various chromatic numbers of $G$.

\begin{Lemma}[Lem.~14 in \cite{EsWo23+}]\label{colour ineq} For every graph $G$ and every integer $d \geq 0$, we have
 \[ \chi(G) \geq \chi^{\underline{d+1}}(G \boxtimes K_{d+1}) \geq \chi^d(G \boxtimes K_{d+1}) \geq \frac{\chi(G \boxtimes K_{d+1})}{d+1} = \frac{\chi_{d+1}(G)}{d+1} \geq \chi_f(G).\]
\end{Lemma}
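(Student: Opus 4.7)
The plan is to verify the five relations one by one, since each step is short and elementary. Along the way the only named tool I would use is the standard identity $\chi(G \boxtimes K_b) = \chi_b(G)$ (stated in the preceding subsection) and the fact that a graph of maximum degree at most $d$ has chromatic number at most $d+1$ (greedy colouring).

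For the first inequality $\chi(G) \geq \chi^{\underline{d+1}}(G \boxtimes K_{d+1})$, I would explicitly construct the ``lifted'' colouring already indicated in Figure~\ref{Fig: Lifted Colourings}: starting from a proper colouring $c$ of $G$ with $\chi(G)$ colours, assign colour $c(v)$ to every vertex $(v,i)$ of $G \boxtimes K_{d+1}$. Two vertices $(v,i),(w,j)$ of the same colour satisfy $c(v)=c(w)$, hence $v$ and $w$ are non-adjacent in $G$, so by the definition of the strong product they are adjacent in $G \boxtimes K_{d+1}$ only when $v=w$. Each monochromatic component is therefore a copy of $K_{d+1}$ of size exactly $d+1$, proving the colouring is $(d+1)$-clustered.

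The second inequality $\chi^{\underline{d+1}}(G \boxtimes K_{d+1}) \geq \chi^d(G \boxtimes K_{d+1})$ is an instance of the general containment $\chi^{\underline{t}} \geq \chi^{t-1}$ already noted in Section \ref{Section:Def}: a monochromatic component on at most $d+1$ vertices has maximum degree at most $d$. For the third inequality $\chi^d(G \boxtimes K_{d+1}) \geq \chi(G \boxtimes K_{d+1})/(d+1)$, I would take an optimal $d$-improper colouring of $H := G \boxtimes K_{d+1}$ and refine each colour class: since each such class induces a subgraph of maximum degree at most $d$, it can be properly partitioned into $d+1$ independent sets by greedy colouring. Concatenating these refinements produces a proper colouring of $H$ with at most $(d+1)\chi^d(H)$ colours, yielding the stated bound.

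The displayed equality $\chi(G \boxtimes K_{d+1}) = \chi_{d+1}(G)$ is the standard identity recalled just before the lemma and needs no further argument. Finally $\chi_{d+1}(G)/(d+1) \geq \chi_f(G)$ follows at once from the definition $\chi_f(G) = \inf_b \chi_b(G)/b$, specializing to $b=d+1$. I expect no real obstacle: the only place where one has to be a bit careful is the third step, where one should note the distinction between the maximum-degree and chromatic-number senses in which ``$d+1$'' appears, so that the greedy refinement genuinely produces a proper (not merely improper) colouring of $H$.
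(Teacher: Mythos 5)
Your proof is correct and follows essentially the same route as the paper's: lift a proper colouring of $G$ to a $(d+1)$-clustered colouring of the product, use that components of size at most $d+1$ have maximum degree at most $d$, refine each $d$-improper colour class into $d+1$ independent sets, and finish with the identity $\chi(G\boxtimes K_b)=\chi_b(G)$ and the definition of $\chi_f$. You merely spell out the details (e.g.\ that the monochromatic components of the lifted colouring are copies of $K_{d+1}$) that the paper leaves implicit.
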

\begin{proof}
 Every colouring of $G$ extends to a $(d+1)$-clustered colouring of $G \boxtimes K_{d+1}$ by giving every vertex $(v, i)$ the colour of $v$. Hence, $\chi(G) \geq \chi^{\underline{d+1}}(G \boxtimes K_{d+1})$. Secondly, $\chi^{\underline{d+1}}(G \boxtimes K_{d+1}) \geq \chi^{d}(G \boxtimes K_{d+1})$ since components of size $d+1$ have maximum degree $d$. For the third inequality, we observe that every colour class in a $d$-improper colouring can be properly coloured with at most $d+1$ colours. The last inequality follows from the definition of the fractional chromatic number.
\end{proof}

Using the same arguments, we can prove similar inequalities for the $b$-fold analogues of the chromatic numbers.

\begin{Lemma}\label{b fold ineq} For integers $b, d, k \geq 1$, we have
 \[ \frac{\chi_{b}(G)}{b} \geq \frac{\chi_{b}^{\underline{d+1}}(G \boxtimes K_{d+1})}{b} \geq \frac{\chi_{b}^d(G \boxtimes K_{d+1})}{b} \geq \frac{\chi_{ b}(G \boxtimes K_{d+1})}{b(d+1)} = \frac{\chi_{ b(d+1)}(G)}{b(d+1)} \geq \chi_f(G)\]
 for every graph $G$. \qedd
\end{Lemma}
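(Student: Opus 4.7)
The plan is to establish each of the five comparisons in turn, following the template of \Cref{colour ineq} but adapted to the $b$-fold setting. Throughout, I will freely use the standard identity $\chi_m(H) = \chi(H \boxtimes K_m)$ and the associativity $G \boxtimes K_{d+1} \boxtimes K_b = G \boxtimes K_{b(d+1)}$, together with $K_{d+1} \boxtimes K_b = K_{b(d+1)}$.

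For the first inequality, I take an optimal proper $b$-fold colouring $f$ of $G$ with $\chi_b(G)$ colours and \emph{lift} it to $G \boxtimes K_{d+1}$ by assigning the set $f(v)$ to every vertex $(v,i)$. For each colour $c$, the set of vertices whose assigned set contains $c$ is a disjoint union of copies of $K_{d+1}$ (one per vertex of $G$ with $c$ in its set), so the lifted colouring is $b$-fold $(d+1)$-clustered. The second inequality is immediate from the fact that a component of size at most $d+1$ has maximum degree at most $d$.

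For the third inequality, which after clearing denominators reads $(d+1)\,\chi_b^d(G \boxtimes K_{d+1}) \geq \chi_b(G \boxtimes K_{d+1})$, I take a $b$-fold $d$-improper colouring of $G \boxtimes K_{d+1}$ with $k$ colours $c_1, \dots, c_k$. For each $i$, the subgraph induced by the vertices whose set contains $c_i$ has maximum degree at most $d$, hence admits a proper colouring into $d+1$ sub-colours $c_i^1, \dots, c_i^{d+1}$. Replacing $c_i$ in each vertex's colour set by the appropriate sub-colour yields a $b$-fold colouring with $k(d+1)$ colours; the point to check is properness, which holds because any two adjacent vertices sharing the colour $c_i$ originally receive distinct sub-colours from the auxiliary $(d+1)$-colouring, and any pair not originally sharing a colour certainly does not share one afterwards.

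The equality $\chi_b(G \boxtimes K_{d+1}) = \chi_{b(d+1)}(G)$ then follows from
\[
\chi_b(G \boxtimes K_{d+1}) = \chi\bigl((G \boxtimes K_{d+1}) \boxtimes K_b\bigr) = \chi(G \boxtimes K_{b(d+1)}) = \chi_{b(d+1)}(G),
\]
and the last inequality $\chi_{b(d+1)}(G)/(b(d+1)) \geq \chi_f(G)$ is direct from the definition of $\chi_f$ as an infimum over all fold-counts. I do not foresee any genuine obstacle: the content is a routine $b$-fold analogue of \Cref{colour ineq}, and the only delicate step is the properness verification in the third inequality, which I have sketched above.
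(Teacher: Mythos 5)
Your proposal is correct and takes essentially the same approach as the paper, which in fact gives no explicit proof of this lemma and simply notes that it follows ``using the same arguments'' as \Cref{colour ineq}; your write-up is precisely that routine $b$-fold adaptation, with the lifting, the $(d+1)$-colour refinement of each colour class, and the identity $\chi_b(H)=\chi(H\boxtimes K_b)$ all handled correctly.
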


As a consequence, we can now establish the fractional version of Conjecture \ref{conjecture d improper} in the following theorem.

\begin{Theorem}\label{Cor frac makkelijk} For every graph $G$ and all positive integers $d$, we have
 \[\chi_f^d(G \boxtimes K_{d+1}) = \chi_f(G).\]
\end{Theorem}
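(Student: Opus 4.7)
The plan is to derive both inequalities directly from Lemma \ref{b fold ineq}, which already packages all the work needed.

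For the upper bound $\chi_f^d(G \boxtimes K_{d+1}) \le \chi_f(G)$, the leftmost inequality of Lemma \ref{b fold ineq} gives
\[
\frac{\chi_b(G)}{b} \geq \frac{\chi_b^d(G \boxtimes K_{d+1})}{b}
\]
for every positive integer $b$. Taking the infimum over $b$ on both sides and using the definition of $\chi_f$ on the left and $\chi_f^d$ on the right yields $\chi_f(G) \geq \chi_f^d(G \boxtimes K_{d+1})$. The conceptual content here is the lifting construction already used in the proof of Lemma \ref{colour ineq}: a proper $b$-fold colouring of $G$ lifts to a $b$-fold $(d+1)$-clustered, hence $d$-improper, colouring of $G \boxtimes K_{d+1}$ by assigning $(v,i)$ the same colour set as $v$, since each colour class becomes a disjoint union of copies of $K_{d+1}$.

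For the lower bound $\chi_f^d(G \boxtimes K_{d+1}) \ge \chi_f(G)$, the rightmost chain of inequalities in Lemma \ref{b fold ineq} gives, for every $b \geq 1$,
\[
\frac{\chi_b^d(G \boxtimes K_{d+1})}{b} \geq \chi_f(G).
\]
Taking the infimum over $b$ on the left gives $\chi_f^d(G \boxtimes K_{d+1}) \geq \chi_f(G)$.

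Combining the two inequalities completes the proof. There is no real obstacle here since Lemma \ref{b fold ineq} has already done all the counting; the theorem is essentially a corollary obtained by taking infima on both ends of the chain.
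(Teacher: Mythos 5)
Your proof is correct and follows essentially the same route as the paper: both derive the two inequalities by taking the infimum over $b$ across the chain in Lemma \ref{b fold ineq}, so the theorem falls out as a direct corollary. The extra commentary on the lifting construction is accurate but not needed beyond what that lemma already provides.
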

\begin{proof} Using \Cref{b fold ineq}, we obtain
\begin{align*}
 \chi_f(G) &= \inf \left\{ \frac{\chi_{b}(G)}{b} : b \in \mathbb{N} \right\}\\ & \geq \inf \left\{ \frac{\chi_b^d(G \boxtimes K_{d+1})}{b} : b \in \mathbb{N} \right\} = \chi_f^d(G \boxtimes K_{d+1})\\
 &\geq \inf \left\{ \chi_f(G) : b \in \mathbb{N} \right\} = \chi_f(G). \qedhere
\end{align*}
\end{proof}

By a completely analogous proof, we obtain that the fractional $t$-clustered chromatic number for $G \boxtimes K_{t}$ and the chromatic number of $G$ are equal.

\begin{Theorem}\label{fractional clustered} For every graph $G$ and all positive integers $t$, we have
 \[\chi_f(G) = \chi^{\underline{t}}_f(G\boxtimes K_t). \qedd\]
\end{Theorem}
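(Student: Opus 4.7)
The plan is to mimic the proof of \Cref{Cor frac makkelijk} verbatim, but reading off the clustered term from the chain in \Cref{b fold ineq} instead of the improper one. Setting $d+1=t$ in \Cref{b fold ineq}, the chain already contains the clustered quantity:
\[\frac{\chi_{b}(G)}{b} \;\geq\; \frac{\chi_{b}^{\underline{t}}(G \boxtimes K_{t})}{b} \;\geq\; \frac{\chi_{b}^{t-1}(G \boxtimes K_{t})}{b} \;\geq\; \frac{\chi_{bt}(G)}{bt} \;\geq\; \chi_f(G).\]
So the $\chi_b^{\underline{t}}(G \boxtimes K_t)/b$ is sandwiched between $\chi_b(G)/b$ (from above) and $\chi_f(G)$ (from below) for every $b \in \mathbb{N}$.

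Taking the infimum over $b \in \mathbb{N}$ on each side yields
\[\chi_f(G) \;=\; \inf_b \frac{\chi_b(G)}{b} \;\geq\; \inf_b \frac{\chi_b^{\underline{t}}(G \boxtimes K_t)}{b} \;=\; \chi_f^{\underline{t}}(G \boxtimes K_t) \;\geq\; \chi_f(G),\]
where the middle equality is just the definition of $\chi_f^{\underline{t}}$. Sandwiching forces equality throughout, which proves the theorem.

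There is essentially no obstacle: the only thing to check is that the clustered term actually appears in the chain from \Cref{b fold ineq} (which it does, by the way the lemma is stated), and that the lower bound in that chain is independent of $b$ so that the infimum on the right collapses to $\chi_f(G)$. In other words, all the work has already been done in establishing the $b$-fold inequalities, and passing to the infimum is immediate.
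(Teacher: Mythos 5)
Your proof is correct and takes essentially the same route as the paper: the paper states that this theorem follows ``by a completely analogous proof'' to \Cref{Cor frac makkelijk}, which is exactly what you do by reading the clustered term $\chi_b^{\underline{t}}(G\boxtimes K_t)/b$ out of the chain in \Cref{b fold ineq} (with $d+1=t$) and taking the infimum over $b$ on both sides.
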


We will show in  \Cref{Section: proof clustered} the non-fractional counterpart of \Cref{fractional clustered}. Next, we prove that for strong products $G \boxtimes K_{d+1}$, its $b$-fold $d$-improper chromatic number equals the $d$-improper chromatic number of its strong product with $K_b$.

\begin{Lemma}\label{b fold improper erin} For every integer $d\geq 0$ and graph $G$, we have
\[\chi^d_b(G \boxtimes K_{d+1}) = \chi^d(G \boxtimes K_{d+1} \boxtimes K_b).\]
\end{Lemma}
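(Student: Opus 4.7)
The plan is to prove the two inequalities separately by constructing an explicit translation between $b$-fold $d$-improper colourings of $G\boxtimes K_{d+1}$ and $d$-improper colourings of $G\boxtimes K_{d+1}\boxtimes K_b$ that uses the same palette in each direction. The key structural fact driving the argument is that, because $K_{d+1}$ and $K_b$ are complete, two distinct vertices $(v,i,l)$ and $(v',i',l')$ of $G\boxtimes K_{d+1}\boxtimes K_b$ are adjacent iff $v=v'$ or $vv'\in E(G)$; the same condition governs adjacency of $(v,i)$ and $(v',i')$ in $G\boxtimes K_{d+1}$.

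For the direction $\chi^d(G\boxtimes K_{d+1}\boxtimes K_b)\leq \chi_b^d(G\boxtimes K_{d+1})$, take a $b$-fold $d$-improper colouring $g:V(G\boxtimes K_{d+1})\to\binom{C}{b}$, choose for each $(v,i)$ a bijection $\sigma_{v,i}:[b]\to g(v,i)$, and set $f(v,i,l)=\sigma_{v,i}(l)$. A direct count using the adjacency description above shows that, for each colour $x$, the degree of $(v,i,l)$ in the colour-$x$ monochromatic subgraph of $G\boxtimes K_{d+1}\boxtimes K_b$ equals the degree of $(v,i)$ in the colour-$x$ monochromatic subgraph of $G\boxtimes K_{d+1}$, since $\sigma_{v',i'}$ is a bijection and so each $(v',i')\neq (v,i)$ with $x\in g(v',i')$ contributes exactly one neighbour.

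For the converse $\chi_b^d(G\boxtimes K_{d+1})\leq\chi^d(G\boxtimes K_{d+1}\boxtimes K_b)$, start from a $d$-improper colouring $f$. For each $v\in V(G)$, the set $C_v:=\{v\}\times[d+1]\times[b]$ is a clique of size $(d+1)b$, so every colour $x$ has multiplicity $m_x^{(v)}\leq d+1$ on $C_v$, and $\sum_x m_x^{(v)}=(d+1)b$. I then rearrange the colours inside each $C_v$ so that each of the $d+1$ fibers $\{v\}\times\{i\}\times[b]$ receives $b$ pairwise distinct colours. This is a bipartite degree-sequence problem (colours on one side with demands $m_x^{(v)}\leq d+1$, the $d+1$ fibers on the other each with demand $b$), which admits a solution by a standard round-robin/Gale--Ryser construction since the max colour-multiplicity $d+1$ equals the number of fibers. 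After rearrangement, defining $g(v,i)$ to be the resulting $b$-set on fiber $\{v\}\times\{i\}\times[b]$ yields a candidate $b$-fold colouring.

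The main obstacle, and the crux of the argument, is to show that this internal rearrangement inside each $C_v$ does not break $d$-improperness. For this I would prove the following invariance: for any choice of $d$-improper colouring, the degree of a vertex of colour $x$ in the colour-$x$ monochromatic subgraph depends only on the multiplicities $\{m_x^{(v)}:v\in V(G)\}$, and equals $(m_x^{(v)}-1)+\sum_{v'\sim_G v}m_x^{(v')}$. This follows because edges between $C_v$ and $C_{v'}$ (with $v\sim_G v'$) are complete bipartite and edges inside $C_v$ are complete, so the precise placement of colours inside $C_v$ is irrelevant. The same formula yields the degree of $(v,i)$ in the colour-$x$ subgraph of $G\boxtimes K_{d+1}$ under $g$, so $g$ is $d$-improper with the same number of colours as $f$, which closes the inequality and therefore gives equality.
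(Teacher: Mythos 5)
Your proof is correct and follows essentially the same route as the paper's: lift a $b$-fold colouring by ordering each colour set, and conversely use the fact that $\{v\}\times[d+1]\times[b]$ is a clique (so each colour has multiplicity at most $d+1$ there) to repartition the $b(d+1)$ colours into $d+1$ rainbow $b$-sets. You are in fact more careful than the paper, which leaves implicit both the round-robin construction of the rainbow sets and the verification (via your degree formula $(m_x^{(v)}-1)+\sum_{v'\sim_G v}m_x^{(v')}$) that the rearrangement preserves $d$-improperness.
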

\begin{proof}
 Every $b$-fold $d$-improper colouring of $G \boxtimes K_{d+1}$ is also a $d$-improper colouring of $G \boxtimes K_{d+1} \boxtimes K_b$ by giving every vertex $(v, i)$ the $i$-th colour of $v$ (in some fixed ordering of the colour sets).

 Let $c$ be a $d$-improper colouring of $\chi^d(G \boxtimes K_{d+1} \boxtimes K_b)$. Since $(v, i_1 , i_2) \sim (v, j_1,j_2)$ for every $i_1 , i_2,j_1$ and $j_2$, every colour is at most in $d+1$ of these vertices in $G \boxtimes K_{d+1} \boxtimes K_b$. Thus, we can put the $b(d+1)$ colours of $v$ into $d+1$ sets of  $b$ different colours. 
\end{proof}

This holds analogously for the $b$-fold $t$-clustered chromatic number of $G \boxtimes K_t$.
\begin{Lemma}\label{b fold cluster erin}
 For every integer $t \geq 0$ and graph $G$, we have
\[\chi^{\underline{t}}_b(G \boxtimes K_{t}) = \chi^{\underline{t}}(G \boxtimes K_{t} \boxtimes K_b). \qedd \] 
\end{Lemma}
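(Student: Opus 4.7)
The plan is to mirror the proof of \Cref{b fold improper erin} with $K_{d+1}$ replaced by $K_t$ and the improper condition replaced by the clustered one, proving each inequality separately.

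For $\chi^{\underline{t}}_b(G \boxtimes K_t) \geq \chi^{\underline{t}}(G \boxtimes K_t \boxtimes K_b)$, I would lift any $b$-fold $t$-clustered colouring $C$ of $G \boxtimes K_t$ to a colouring $c$ of $G \boxtimes K_t \boxtimes K_b$ by assigning $(v,i,j)$ the $j$-th colour of $C(v,i)$ under a fixed ordering. Since the $b$ colours in each $C(v,i)$ are pairwise distinct, a given colour $c^*$ is taken by at most one $(v,i,j)$ for each fixed $(v,i)$. Hence the projection $(v,i,j)\mapsto(v,i)$ restricted to the $c^*$-coloured vertices of the 3-way product is injective, and by the strong product adjacency rules it is also edge-preserving into the $c^*$-coloured vertices of $G \boxtimes K_t$. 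Thus every monochromatic component of $c$ injects into a monochromatic component of $C$, which has size at most $t$.

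For $\chi^{\underline{t}}_b(G \boxtimes K_t) \leq \chi^{\underline{t}}(G \boxtimes K_t \boxtimes K_b)$, fix a $t$-clustered colouring $c$ of $G \boxtimes K_t \boxtimes K_b$. For each $v \in V(G)$, the $tb$ vertices $\{v\}\times[t]\times[b]$ form a clique in the 3-way product, so every colour appears at most $t$ times on this fiber (any same-coloured vertices of a clique share a monochromatic component, which has size at most $t$). A round-robin distribution then partitions this multiset of $tb$ colour instances into $t$ groups of $b$ pairwise distinct colours, and I set $C(v,i)$ to be the $i$-th group.

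To verify $C$ is a $b$-fold $t$-clustered colouring, distinctness within each $C(v,i)$ is built in. Let $m_v$ denote the multiplicity of a fixed colour $c^*$ on the $v$-fiber in $c$; by construction, $c^*$ lies in exactly $m_v$ of the sets $C(v,1),\ldots,C(v,t)$. Consequently the monochromatic subgraph of colour $c^*$ in $G \boxtimes K_t \boxtimes K_b$ and the corresponding one in $G \boxtimes K_t$ are isomorphic: both are the blow-up of $G[\{v : m_v \ge 1\}]$ obtained by replacing each such $v$ by a clique of size $m_v$ and each edge $vv'$ by the complete bipartite join between the two cliques. In particular their component sizes agree, so the clustered property of $c$ transfers to $C$. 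The only mildly nontrivial step is the round-robin partition lemma, but it is a short exercise and poses no real obstacle.
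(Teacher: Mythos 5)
Your proposal is correct and follows essentially the same route as the paper, which proves the improper analogue (\Cref{b fold improper erin}) by exactly this lift in one direction and the clique-multiplicity/round-robin regrouping in the other, and then declares the clustered case ``analogous''. Your verification that the clustered property actually transfers back to the $b$-fold colouring (via the isomorphism of the two monochromatic subgraphs as blow-ups with sizes $m_v$) is more detailed than anything the paper writes down, but it is the same argument.
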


Lastly, we prove that the $t$-clustered chromatic number of graphs defined by the lexicographical product\footnote{
The \textsl{lexicographical product} of $G$ with $H$, denoted $G[H]$, has $G \times H$ as vertex set and $(v, i) \sim (w,j)$ if either $v \sim w$ or $v=w$ and $i \sim j$. In particular, $E(G \boxtimes H) \subseteq E(G[H])$ and there is an equality if and only if $H$ is a complete graph.
} $G[H]$ is only dependent on $G$ and the number of vertices of $H$.

\begin{Lemma}\label{lemma lexi prod} Let $H$ be a graph on $k$ vertices. Then for every graph $G$ and positive integer $t \geq k$, we have
\[\chi^{\underline{t}}(G[H]) = \chi^{\underline{t}}(G \boxtimes K_k) .\]
\end{Lemma}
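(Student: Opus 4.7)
The plan is to prove the two inequalities separately. For the easy direction $\chi^{\underline{t}}(G[H]) \leq \chi^{\underline{t}}(G \boxtimes K_k)$, I would simply observe that $G \boxtimes K_k$ is nothing but $G[K_k]$, and since $H$ is a spanning subgraph of $K_k$, the graph $G[H]$ is a spanning subgraph of $G[K_k] = G \boxtimes K_k$. Removing edges can only split monochromatic components into smaller ones, so any $t$-clustered colouring of $G\boxtimes K_k$ is automatically a $t$-clustered colouring of $G[H]$. This direction does not even use the assumption $t \geq k$.

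For the harder direction $\chi^{\underline{t}}(G \boxtimes K_k) \leq \chi^{\underline{t}}(G[H])$, I would take any $t$-clustered colouring $c$ of $G[H]$ and argue that the same map $c$, viewed on the common vertex set $V(G)\times V(H)$, is also $t$-clustered on $G\boxtimes K_k$. Fix a monochromatic component $C$ of $G\boxtimes K_k$ under $c$, and let $P := \{v\in V(G) : V(C)\cap(\{v\}\times V(H))\ne\emptyset\}$ be the set of $G$-vertices whose fibre meets $V(C)$. Every non-loop edge of $G\boxtimes K_k$ projects to a $G$-edge, so the connectedness of $C$ forces the induced subgraph $G[P]$ to be connected.

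The argument then splits into two cases. If $|P|=1$, then $V(C)$ is contained in a single fibre of size $k$, and the hypothesis $t\geq k$ gives $|V(C)|\leq k\leq t$. If $|P|\geq 2$, I would show that $V(C)$ lies inside a single monochromatic component of $G[H]$ by replacing each ``intra-fibre'' step of a path in $C$ by an inter-fibre detour: given a step $(v,i)\to(v,j)$ in a $G\boxtimes K_k$-path inside $V(C)$, pick any $w\in P$ with $w\sim v$ in $G$ (which exists because $G[P]$ is connected and has at least two vertices) and any $(w,l)\in V(C)\cap(\{w\}\times V(H))$; then both $(v,i)(w,l)$ and $(w,l)(v,j)$ are edges of $G[H]$ with endpoints in $V(C)$. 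Inter-fibre steps are already edges of $G[H]$, so this rerouting gives a monochromatic path in $G[H]$ staying inside $V(C)$, showing $V(C)$ is contained in one $G[H]$-component and hence $|V(C)|\leq t$.

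The main obstacle is the detour argument in the case $|P|\geq 2$; one must be careful to always find the intermediate vertex $(w,l)$ \emph{inside} $V(C)$, which is exactly what the definitions of $C$ and $P$ guarantee. The hypothesis $t\geq k$ plays a role only in the degenerate one-fibre case, and connectedness of $G[P]$ (inherited from connectedness of $C$ via projection to $G$) is what makes the rerouting work whenever there is at least one other fibre available.
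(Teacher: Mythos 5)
Your proof is correct and follows essentially the same route as the paper: the easy direction by spanning-subgraph monotonicity, and the hard direction by showing the same colouring remains $t$-clustered on $G \boxtimes K_k$, splitting into the single-fibre case (where $t \geq k$ is used) and the case of a component meeting two adjacent fibres (where the completeness of inter-fibre adjacency in $G[H]$ is used). The only cosmetic difference is that you start from a component of $G \boxtimes K_k$ and reroute its paths through $G[H]$, whereas the paper starts from a component of $G[H]$ and checks that adding the missing intra-fibre edges cannot enlarge it.
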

\begin{proof}
Since $G[H]$ is a subgraph of $G \boxtimes K_k$, we have by monotonicity that \[\chi^{\underline{t}}(G[H]) \leq \chi^{\underline{t}}(G \boxtimes K_k).\]

 On the other hand, let $c$ be a $t$-clustered colouring of $G[H]$. If $v \neq w$ then both $G[H]$ and $G \boxtimes K_k$ have the property that $(v, i) \sim (w,j)$ if and only if $v \sim w$. Hence, if a component $C$ of colour $x$ in a $t$-clustered colouring of $G[H]$ contains vertices $(v, i)$ and $(w,j)$ such that $v \sim w$ in $G$, then all vertices $(v,k)$ with the colour $x$ also lie in $C$. Thus these monochromatic components do not increase in size if we add all missing edges $(v, i)(v, j)$.

 If there does not exist such $(v,i)$ and $(w,j)$ in $C$, then $C$ is equal to $\{(v, i) \mid i \in A \subseteq [n]\}$. This does not change when we add all missing edges between the different $(v, i)$. Hence, the component contains at most $k \leq t$ vertices in $G \boxtimes K_k$. In conclusion, every $t$-clustered colouring of $G[H]$ is also a $t$-clustered colouring of $G \boxtimes K_k$, whence the result follows. 
\end{proof}

\begin{Remark}
 Note that a similar statement to \Cref{lemma lexi prod} does not hold for the $d$-improper chromatic number. For example, take $d=2$, $G = K_6$ and $H= \comp{K_3}$. Then by the clique bound we have \[\chi^2(K_6 \boxtimes K_3)\geq \frac{\omega(K_6 \boxtimes K_3)}{3} = \frac{\omega(K_{18})}{3}=6.\] However, $\chi^{2}(K_6[\comp{K_3}]) \leq 5$ as it has the following $2$-improper colouring: 
 \begin{figure}[ht]
     \centering
     \includegraphics[width=0.25\linewidth,page=1]{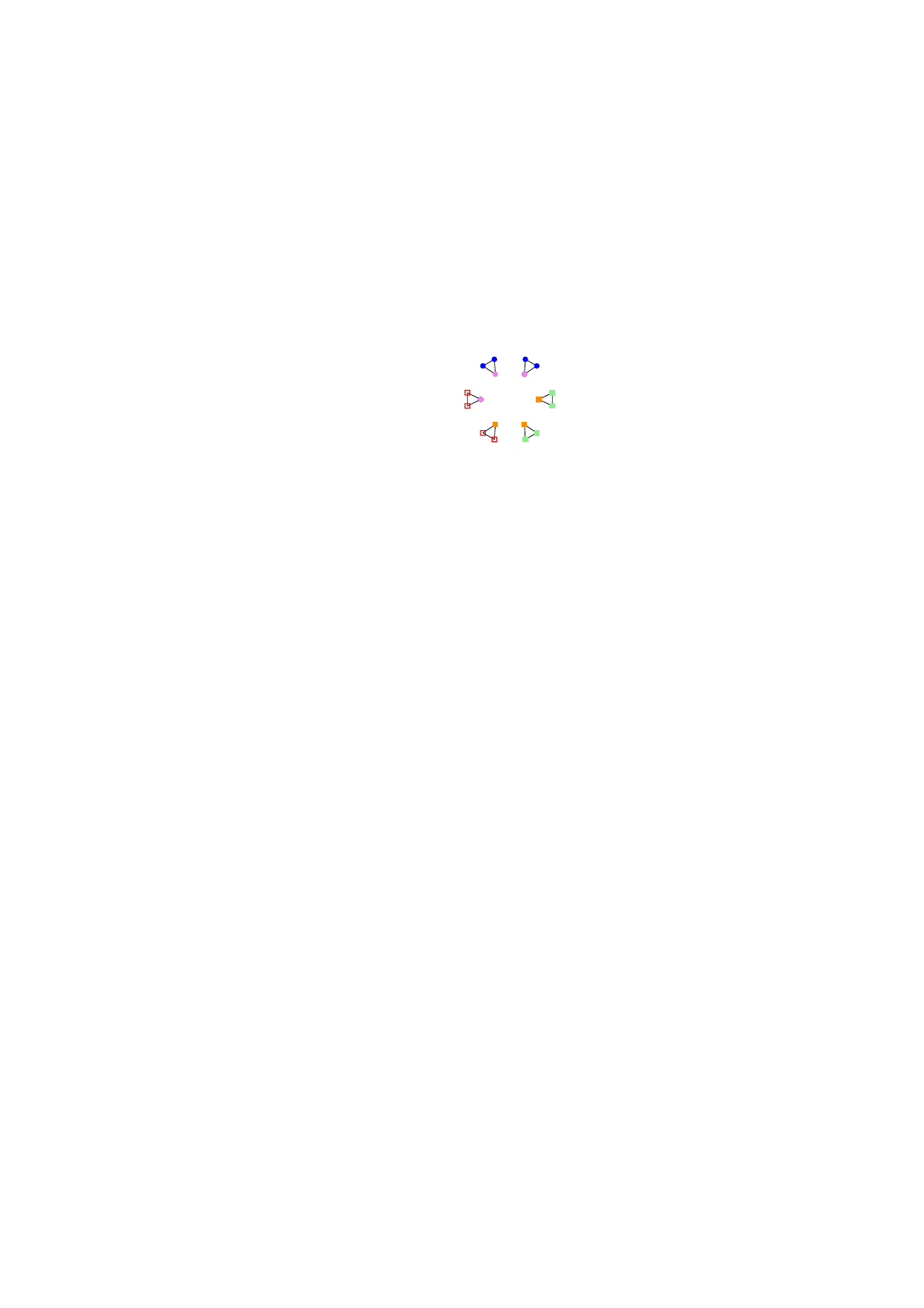}
     \caption{A $2$-improper colouring with 5 colours of $K_6[\comp{K_3}]$ drawn in its complement $\comp{K_6[\comp{K_3}]} = 6 K_3$. In the colouring of the complement graph, every vertex is non-adjacent to (at most) two vertices of its colour.}
 \end{figure}
 \end{Remark}

\section{Spectral bounds for the \texorpdfstring{$d$-improper}{d-improper} chromatic number}\label{Section: spectral bounds}

An important goal in graph theory is to develop good bounds on the chromatic number, which is NP-hard to compute, using graph invariants that are easier to compute, such as the degrees or graph eigenvalues. We have already seen a lower and an upper bound for the $d$-improper chromatic number, in \Cref{Section:Def}. In this section, we generalize some well-known spectral lower bounds for the chromatic number to the $d$-improper chromatic number. 

In the rest of the paper, we will sometimes need to be more specific by writing $\lambda_i(G)$ for the $i$th largest eigenvalue of $G$, and $A=A(G)$ for the adjacency matrix of $G$.
We will write $A$ for $A(G)$ and $\lambda_i$ for $\lambda_i(G)$ when the context is clear.

\subsection{\texorpdfstring{$d$-improper}{d-improper} Hoffman colourings}
One famous spectral bound for the chromatic number is the Hoffman bound \cite{Hof70}, also known as the ratio bound. This bound states that $\chi(G) \geq 1- \frac{\lambda_1}{\lambda_n}$. In this section, we will look at the generalization of this bound that holds for the $d$-improper chromatic number from \cite[Theorem 3]{BILU2006608}, which we will refer to as the Hoffman bound for $d$-improper colourings. Recall that, if the $d$-improper Hoffman bound is tight for some graph $G$, a $d$-improper colouring with $\chi^d(G)$ colours is called a $d$-improper Hoffman colouring of $G$. We will give a characterization of $d$-improper Hoffman colourings, which was not in \cite{BILU2006608}. 
In doing so, we give a stronger version of Bilu's theorem, with the equality characterization. To make notation a bit easier we use $m:=\chi^d(G)$.

 Given a partition $\mathcal{P}=\{V_1, \ldots, V_m\}$ of the vertices of a graph $G$ and a vector $x \in \mathbb{R}^{V(G)}$, we define the \textsl{weight-intersection numbers} for $u \in V_i$ with respect to $x$ as follows:
 \[b_{ij}(u) = \frac{1}{x_u} \sum_{v \in V_j \cap N(u)} x_v.\]

 A partition $\mathcal{P}$ is \textsl{weight-regular} whenever the weight-intersection numbers $b_{ij}(u)$ do not depend on the vertex $u \in V_i$, but only on the sets $V_i$ and $V_j$.

\begin{Theorem}\label{thm:Hoffmanncols}
     For every integer $d\geq 0$ and every graph $G$, we have \[\chi^d(G) \geq \frac{\lambda_1- \lambda_n}{d-\lambda_n} = 1- \frac{\lambda_1-d}{\lambda_n-d}.\]
     Furthermore, if equality holds, then the following hold:
     \begin{enumerate}[(a)]
         \item the multiplicity of $\lambda_n$ in $G$ is at least $m-1$;
         \item if $G$ is connected, then in any $d$-improper Hoffman colouring all  colour classes are $d$-regular and the partition into colour classes is weight-regular with respect to the weights $u_v$ where $u$ is the Perron eigenvector of $A$; 
         \item if $G$ has a unique Hoffman colouring up to permutation of the colour classes, then $\lambda_n$ has multiplicity  exactly $m-1$; and
         \item  if $G$ is regular, then the partition into $\chi^d(G)$ colour classes is equitable, and every vertex has $d$ neighbours in its colour class and $d-\lambda_n$ neighbours in every other colour class. 
     \end{enumerate}
\end{Theorem}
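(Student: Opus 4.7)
My plan is to run the Bilu argument via a Perron-weighted quotient matrix and then to extract each of (a)--(d) from the configurations that make every inequality in that argument tight. Let $V_1,\ldots,V_m$ be the colour classes of a $d$-improper colouring with $m=\chi^d(G)$ parts, let $\mathbf{u}$ be a unit Perron eigenvector of $A$, and let $P$ be the $n\times m$ matrix whose $i$th column is $p_i:=\mathbf{u}|_{V_i}/\|\mathbf{u}|_{V_i}\|$, extended by zeros to $\mathbb{R}^n$. Then $P^{\top}P=I_m$, and writing $\mathbf{a}=(\|\mathbf{u}|_{V_1}\|,\ldots,\|\mathbf{u}|_{V_m}\|)^{\top}$ gives $P\mathbf{a}=\mathbf{u}$ and hence $(P^{\top}AP)\mathbf{a}=\lambda_1\mathbf{a}$, so $\lambda_1(P^{\top}AP)=\lambda_1$. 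Because $G[V_i]$ has maximum degree at most $d$, its spectral radius is at most $d$, and the Rayleigh bound gives $(P^{\top}AP)_{ii}\le \lambda_1(A[V_i])\le d$, hence $\operatorname{tr}(P^{\top}AP)\le md$. Combining this with Cauchy interlacing $\lambda_m(P^{\top}AP)\ge \lambda_n$ yields
\[
\lambda_1+(m-1)\lambda_n\le \lambda_1(P^{\top}AP)+(m-1)\lambda_m(P^{\top}AP)\le \operatorname{tr}(P^{\top}AP)\le md,
\]
which rearranges to the claimed lower bound on $\chi^d(G)$.

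In the equality case every inequality above must be tight, forcing $(P^{\top}AP)_{ii}=d$ for all $i$ and $\lambda_2(P^{\top}AP)=\cdots=\lambda_m(P^{\top}AP)=\lambda_n$. Part (a) is then immediate from Cauchy interlacing: $\lambda_{n-m+i}(A)\le \lambda_i(P^{\top}AP)=\lambda_n$ for $i=2,\ldots,m$, while $\lambda_{n-m+i}(A)\ge \lambda_n$ by definition, so $m-1$ eigenvalues of $A$ coincide with $\lambda_n$. For part (b), connectedness of $G$ makes $\mathbf{u}$ strictly positive on each $V_i$, so the Rayleigh equality at a positive vector forces $\mathbf{u}|_{V_i}$ to be an eigenvector of $A[V_i]$ for the eigenvalue $d=\lambda_1(A[V_i])$. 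Applying Perron--Frobenius on each connected component of $G[V_i]$ shows that the component attains spectral radius equal to its maximum-degree bound, so it is $d$-regular; hence $V_i$ is $d$-regular.

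For the weight-regularity part of (b) the plan is to show that $\operatorname{col}(P)$ is $A$-invariant and then read the weight-intersection numbers off the identity $AP=P(P^{\top}AP)$. The shift $B:=A-\lambda_n I$ is positive semidefinite, and the tight equations force $P^{\top}BP=(\lambda_1-\lambda_n)\mathbf{a}\mathbf{a}^{\top}$ to have rank one. Factoring $B=C^{\top}C$, the columns of $CP$ are all proportional to a single vector, so $Bp_i=\alpha_i y$ for scalars $\alpha_i$ and a common $y\in\mathbb{R}^n$; matching this against $B\mathbf{u}=(\lambda_1-\lambda_n)\mathbf{u}$ together with $\mathbf{u}=P\mathbf{a}\in\operatorname{col}(P)$ identifies $y$ as a scalar multiple of $\mathbf{u}$, placing each $Bp_i$, and hence each $Ap_i$, inside $\operatorname{col}(P)$. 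Reading $AP=P(P^{\top}AP)$ vertex by vertex then yields $b_{ii}(v)=d$ and $b_{ij}(v)=d-\lambda_n$ for every $v\in V_i$ and $j\neq i$, which is exactly weight-regularity with respect to $\mathbf{u}$. Part (d) falls out as the specialisation to regular $G$, in which $\mathbf{u}$ is constant, weight-regularity collapses to equitability, and the intersection numbers above become the claimed degrees.

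I expect part (c) to be the main obstacle. The plan is to argue the contrapositive: if $\lambda_n$ has multiplicity at least $m$ in $A$, then there is a $\lambda_n$-eigenvector of $A$ that does not lie in the $(m-1)$-dimensional $\lambda_n$-eigenspace carved out of $\operatorname{col}(P)$ by the construction above, and one would use such an extra eigenvector to rotate $P$ to a second orthonormal frame $P'$ whose column span is still $A$-invariant and still satisfies the tight equations. The delicate step is to ensure that this rotation can be chosen so that the columns of $P'$ are again of the form $\mathbf{u}|_{V_i'}/\|\mathbf{u}|_{V_i'}\|$ for an honest partition $\{V_1',\ldots,V_m'\}$ into parts of maximum degree at most $d$, and that the resulting partition is not a mere permutation of $\{V_1,\ldots,V_m\}$; characterising which orthonormal frames inside a fixed $A$-invariant subspace correspond to actual partitions is where the serious work will lie.
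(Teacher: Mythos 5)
Your derivation of the bound and of parts (a), (b) and (d) is correct and is essentially the paper's argument in different clothing: the paper works with the quotient matrix $C_{ij}=x_i^{\top}Ax_j/\|x_i\|^2$, where $x_i$ is the Perron vector restricted to the $i$th colour class, which is diagonally conjugate to your $P^{\top}AP$, and the tightness analysis (each diagonal entry forced to equal $d$, all eigenvalues other than $\lambda_1$ forced to equal $\lambda_n$) is identical. The one place you genuinely diverge is the weight-regularity claim in (b): the paper derives $\|x_i\|=\|x_j\|$ from the symmetry of the situation and then cites Fiol's lemma on tight weight-interlacing, whereas you prove the $A$-invariance of $\operatorname{col}(P)$ from scratch via the rank-one positive semidefinite matrix $P^{\top}(A-\lambda_n I)P$. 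That argument is sound and self-contained (a nice replacement for the citation), though to extract the numerical value $b_{ij}(v)=d-\lambda_n$ you still need to note that $(P^{\top}AP)_{ii}=\lambda_n+(\lambda_1-\lambda_n)a_i^2=d$ forces all the $a_i$ to coincide, i.e.\ $\|x_i\|=\|x_j\|$.

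The genuine gap is part (c), which you yourself flag as unresolved. Your plan --- take an extra $\lambda_n$-eigenvector, rotate the frame $P$ inside the enlarged invariant subspace, and recognise the rotated frame as arising from an honest partition into parts of maximum degree at most $d$ that is not a permutation of the original --- is precisely the step that does not go through: a generic orthonormal frame in an $A$-invariant subspace has no reason to come from a vertex partition, and you give no mechanism for producing one. The paper never attempts this direction. Instead it argues from colourings to eigenvectors: tight interlacing gives $Ax_i=\sum_k C_{ik}x_k$ (equivalently your identity $AP=P(P^{\top}AP)$), hence $A(x_i-x_j)=\lambda_n(x_i-x_j)$, so each $d$-improper Hoffman colouring hands you an explicit $(m-1)$-dimensional subspace of the $\lambda_n$-eigenspace spanned by the differences $x_i-x_j$; a second Hoffman colouring that is not a permutation of the first contributes a vector $y_i-y_j$ outside that span, forcing the multiplicity to be at least $m$. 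That is the implication the paper actually establishes for (c), and it sidesteps the frame-to-partition problem entirely; note that it is the reverse implication of the one your plan targets, so if you insist on literally deducing a second colouring from an extra eigenvector you are attempting something strictly harder than (and different from) the paper's proof, and as it stands that step is missing.
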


We will prove our main theorem in parts, for readability, but we will retain all definitions throughout the remainder of this section. First, we give a slightly adapted proof for the $d$-improper Hoffman bound of $\cite{BILU2006608}$, wherein we establish our notation for the rest of the section.

\begin{proof}[Proof of Bilu's Theorem]
 Let $X_1, \ldots, X_m$ be the colour classes of a $d$-improper colouring of $G$. By definition, the subgraphs induced by $X_i$ have maximum degree at most $d$. Moreover, let $u$ be the Perron eigenvector of $A$ of norm 1 and let $x_j$ be the entrywise product of $u$ with the characteristic vector of $X_j$.

We now define an $m\times m$ matrix $C$ where the $ij$-entry is given by
$$ C_{ij}= \frac{x_i^TAx_j}{||x_i||^2}.$$ By Cauchy interlacing,  the eigenvalues of $C$ interlace the eigenvalues of $A$. Moreover,
\begin{align*}
(C \mathbb{1})_i &= \frac{1}{||x_i||^2}\sum_{j=1}^m x_i^TAx_j = \frac{1}{||x_i||^2} x_i^TAu = \frac{\lambda_1}{||x_i||^2} x_i^Tu = \frac{\lambda_1}{||x_i||^2} x_i^Tx_i=\lambda_1.
\end{align*}
Thus, $\lambda_1$ is an eigenvalue of $C$ with eigenvector $\mathbb{1}$. 
Furthermore, the trace of $C$ equals
\[\Tr(C) = \sum_{i=1}^m \frac{1}{||x_i||^2} x_i^T A x_i= \frac{1}{||x_i||^2} x_i^T B_i x_i\] where $B_i$ is the block of $A$ indexed by the rows and columns of $X_i$. In other words, $B_i$ is the adjacency matrix of the subgraph induced by $X_i$. Since the maximum degree in $X_i$ is at most $d$, the largest eigenvalue of $B_i$ is at most $d$ and so 
\begin{equation}\label{eq:hoffmanbd-each-part-regular}
   x_i^TB_ix_i\leq d||x_i||^2. 
\end{equation}
Moreover, $\frac{1}{||x_i||^2} x_i^T B_i x_i$ is non-negative as $x_i$ only has non-negative entries. Therefore, we see $$0 \leq \Tr(C) \leq \sum_{i=1}^m d = dm,$$
where the second inequality holds with equality if and only if \eqref{eq:hoffmanbd-each-part-regular} holds with equality for all $i$. By interlacing we obtain
\begin{equation}\label{eq:hoffmanbd-sum-eigs}
    dm \geq \Tr(C) = \eta_1 + \ldots + \eta_m \geq \lambda_1+\lambda_{n-m+2}+\ldots \lambda_n \geq \lambda_1+(m-1)\lambda_n .
\end{equation}
The result follows from rewriting. 
\end{proof}

We will now characterize when there is equality in the Hoffman bound for $d$-improper colourings. As the eigenvalues and the chromatic number are determined by the different connected components, we will focus on the case when the graph is connected. 

\begin{proof}[Proof of parts (a) and (b) of \Cref{thm:Hoffmanncols}]
If equality holds in the Hoffman bound for $d$-improper colourings, then \eqref{eq:hoffmanbd-each-part-regular} holds with equality for every $i$ and \eqref{eq:hoffmanbd-sum-eigs} holds with equality. In particular, part (a) immediately follows since \eqref{eq:hoffmanbd-sum-eigs} gives that $\lambda_n$ has multiplicity at least $m-1$.

For each $i$, equality in \eqref{eq:hoffmanbd-each-part-regular} gives us that $d$ is an eigenvalue of $B_i$ with eigenvector $x_i$, which is a positive vector (on $X_i$) by Perron--Frobenius. Since the maximum degree of the subgraph induced by $X_i$ is at most $d$, we see that $d$ is the largest eigenvalue and every component of the subgraph induced by $X_i$ has degree $d$.  

 It remains to show that the partition into colour classes is weight-regular with respect to the weights $u_v$. 
 Recall that $\mathbb{1}$ is an eigenvector of $C$ with eigenvalue $\lambda_1 = dm-(m-1)\lambda_n$ and all other eigenvalues of $C$ are equal to $\lambda_n$. Since $C$ is diagonalizable, the eigenspace corresponding to $\lambda_n$ consists of all vectors orthogonal to $\mathbf{1}$. Thus the spectral idempotents of $C$ are $J_{m,m}$, the all ones matrix of order $m\times m$, and $I_m - J_{m,m}$. 
Thus $C$ equals
 \[C = \begin{pmatrix}
 d & d-\lambda_n& d-\lambda_n &\ldots & d-\lambda_n\\
 d-\lambda_n & d& d-\lambda_n &\ldots & d-\lambda_n\\
 \vdots & & \ddots & & \vdots\\
  \vdots & & & \ddots & \vdots\\
 d-\lambda_n & d-\lambda_n& d-\lambda_n & \ldots & d
\end{pmatrix}.\]
In particular, we have that \[\frac{x_i^TAx_j}{||x_i||^2} = C_{ij}= d-\lambda_n = C_{ji} = \frac{x_i^TAx_j}{||x_j||^2}.\] As $d-\lambda_n > 0$, we obtain that $||x_i||=||x_j||$ for all $i,j$. Thus, $$ \frac{x_i^T A x_j}{||x_i||^2} = \frac{x_i^T A x_j}{||x_i|| ||x_j||} = Q_{i,j},$$ where $Q$ is the normalized weight quotient matrix with respect to the weights $u_v$ where $u$ is the normalized Perron eigenvector of $A$. If a graph $G$ is $d$-improper Hoffman colourable, then the interlacing between $Q=C$ and $A$ is tight. Now it follows from  \cite[Lemma 2.3]{fiol1999eigenvalue} that this partition is weight-regular with respect to the weights $u_v$. \end{proof}

\begin{proof}[Proof of part (c) of \Cref{thm:Hoffmanncols}]
The tightness in the interlacing bound implies that $$Ax_i = \sum_{k=1}^m C_{ik}x_k$$ for all $i$ by  \cite[Corollary 2.5.3]{brouwer2011spectra}. 
It follows that, \[A(x_i-x_j) = \sum_{k=1}^m (C_{ik} x_i-C_{jk}x_j) = \lambda_n(x_i - x_j).\]
Thus $x_i-x_j$ is an eigenvector of $A$ with eigenvalue $\lambda_n$ for all $i,j$. These vectors span a space of dimension $m-1$. If there is some other colouring $c'$ of $G$, there are also eigenvectors $y_i-y_j$ of $A$ corresponding to $\lambda$ corresponding to $c'$. If $c'$ is not a permutation of the colours of our original colouring, at least one of these vectors $y_i-y_j$ will not lie in the span of $x_i-x_j$'s. Thus, the eigenspace corresponding to $\lambda_n$ contains a subspace of dimension $m$ so the multiplicity of $\lambda_n$ at least $m$.
\end{proof}

If the graph $G$ is regular, we can say even more about the equality case and prove part (d) by generalizing \cite[Lemma 2.3]{blokhuis20073} to $d$-improper Hoffman colourable graphs. 

\begin{proof}[Proof of part (d) of \Cref{thm:Hoffmanncols}] Recall that, in this case, every colour class $X_i$ is $d$-regular. If $G$ is regular, the Perron eigenvector $u = \mathbf{1}$ and thus $x_i$ is equal to the characteristic vector $\mathbf{1}_{X_i}$. Since the interlacing is tight, the partition into colour classes $X_1, \ldots, X_m$ is equitable, which is to say that, for every pair $i,j$, every element in $X_i$ is connected to exactly $C_{i,j}$ elements in $X_j$. The number of edges between $X_i$ and $X_j$ is nonzero and equal to both $|X_i|(d-\lambda_n) $ and $|X_j|(d-\lambda_n)$ and we obtain $|X_i|=|X_j|$ for all $i,j$. 
\end{proof}

The lemma below gives us a way to construct $d$-improper Hoffman colourable graphs from Hoffman colourable graphs, and this already generates a rich family of $d$-improper Hoffman colourable graphs.

\begin{Lemma}\label{Lem: construct Hoffman colourable}
 If $G$ admits a Hoffman colouring, then $G \boxtimes K_{d+1}$ admits a $d$-improper Hoffman colouring.
\end{Lemma}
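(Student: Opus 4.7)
The plan is to compute the spectrum of $G \boxtimes K_{d+1}$ from that of $G$, verify that the $d$-improper Hoffman bound of $G \boxtimes K_{d+1}$ simplifies to the ordinary Hoffman bound of $G$, and then exhibit an explicit colouring meeting it.

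First, I would recall the well-known fact (from, e.g., the structure $A(G\boxtimes H)=(A(G)+I)\otimes(A(H)+I)-I$) that the spectrum of $G\boxtimes H$ consists of all numbers $(\lambda+1)(\mu+1)-1$ with $\lambda \in \operatorname{spec}(A(G))$ and $\mu \in \operatorname{spec}(A(H))$. Since $\operatorname{spec}(K_{d+1})=\{d,-1,\dots,-1\}$, the eigenvalues of $G\boxtimes K_{d+1}$ are exactly
\[
\{(d+1)\lambda_i(G)+d : 1\le i\le n\}\ \cup\ \{-1,\dots,-1\}.
\]
In particular $\lambda_1(G\boxtimes K_{d+1})=(d+1)\lambda_1(G)+d$. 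Since $G$ admits a Hoffman colouring it has at least one edge, so by Cauchy interlacing with $K_2$ we have $\lambda_n(G)\le -1$, hence $(d+1)\lambda_n(G)+d\le -1$, giving $\lambda_n(G\boxtimes K_{d+1})=(d+1)\lambda_n(G)+d$.

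Next I would substitute into the $d$-improper Hoffman bound of \Cref{thm:Hoffmanncols} applied to $G\boxtimes K_{d+1}$:
\[
1-\frac{\lambda_1(G\boxtimes K_{d+1})-d}{\lambda_n(G\boxtimes K_{d+1})-d}
=1-\frac{(d+1)\lambda_1(G)}{(d+1)\lambda_n(G)}
=1-\frac{\lambda_1(G)}{\lambda_n(G)}
=\chi(G),
\]
where the last equality uses the assumption that $G$ is Hoffman colourable.

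Finally, I would produce an explicit $d$-improper colouring of $G\boxtimes K_{d+1}$ with exactly $\chi(G)$ colours, matching the bound just computed. Take a proper colouring $c:V(G)\to\{1,\dots,\chi(G)\}$ of $G$ and lift it by assigning colour $c(v)$ to every vertex $(v,i)$ of $G\boxtimes K_{d+1}$, as illustrated in \Cref{Fig: Lifted Colourings}. For each colour class, the monochromatic subgraph is a vertex-disjoint union of copies of $K_{d+1}$ (one copy for each vertex of the corresponding colour class in $G$; there are no cross edges because $c$ is proper), so its maximum degree is $d$. This gives a $d$-improper colouring with $\chi(G)$ colours, matching the $d$-improper Hoffman bound and hence is a $d$-improper Hoffman colouring.

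The main obstacle in this plan is essentially bookkeeping: the strong-product eigenvalue formula is standard, and the lifting construction appears already in the discussion surrounding \Cref{conjecture d improper}. The only genuinely careful point is verifying that $\lambda_n(G)\le -1$ so that the minimum eigenvalue of $G\boxtimes K_{d+1}$ comes from the $d$-branch of the spectrum rather than from the $-1$'s; this is where the presence of an edge in $G$ (guaranteed by the existence of a Hoffman colouring) is used.
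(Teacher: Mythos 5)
Your proposal is correct and follows essentially the same route as the paper: compute the spectrum of $G \boxtimes K_{d+1}$, check that the $d$-improper Hoffman bound collapses to $1-\lambda_1(G)/\lambda_n(G)=\chi(G)$, and note that the lifted colouring (the paper invokes this via $\chi(G)\ge\chi^d(G\boxtimes K_{d+1})$ from \Cref{colour ineq}) attains it. Your extra care in verifying $\lambda_n(G)\le -1$ so that the minimum eigenvalue comes from the $(d+1)\lambda_n(G)+d$ branch is a nice touch that the paper leaves implicit, but it does not change the argument.
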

\begin{proof}
 If $G$ admits a Hoffman colouring, then $\chi(G) = 1 - \frac{\lambda_1(G)}{\lambda_n(G)}$. Moreover, every eigenvalue of $G \boxtimes K_{d+1}$ is either $-1$ or $(d+1)\lambda_i(G)+d$. Thus, since $-1$ is not largest nor smallest eigenvalue of $G \boxtimes K_{d+1}$, we can see that 
\[ \lambda_1(G \boxtimes K_{d+1}) = (d+1)\lambda_1(G)+d , \quad \lambda_n(G \boxtimes K_{d+1}) = (d+1)\lambda_n(G)+d .\]  Therefore,
 \begin{align*}
     1- \frac{\lambda_1(G)}{\lambda_n(G)} &= \chi(G) \geq \chi^d(G \boxtimes H) \geq \frac{\lambda_1(G \boxtimes K_{d+1}) -\lambda_n(G \boxtimes K_{d+1})}{d-\lambda_n(G \boxtimes K_{d+1})}\\ &=\frac{(d+1)(\lambda_1(G) - \lambda_n(G))}{d-(d+1)\lambda_n(G)-d} = 1- \frac{\lambda_1(G)}{\lambda_n(G)}.\end{align*} So every inequality above must hold with equality and the result follows.
\end{proof}

\begin{Remark}
Suppose we take the product of a Hoffman colourable graph $G$ with another $d$-regular graph $H$. Again $\chi^d(G\boxtimes H) \leq \chi(G)$ since we can colour every vertex $(v,w)$ with the colour $c(v)$ for a proper colouring $c$ of $G$.

The largest eigenvalue of $G \boxtimes H$ is $(d+1)\lambda_1(G)+d$ and the smallest eigenvalue is \[\min\{(d+1)\lambda_n(G)+d, (\lambda_n(H)+1) \lambda_1(G)+\lambda_n(H)\}.\] If $(d+1)\lambda_n+d$ is the minimum quantity, then we again obtain a $d$-improper Hoffman colourable graph. If, however, the other quantity is strictly smaller, then there is no equality as $\frac{a+x}{b+x}$ is a decreasing function in $x > 0$ for $a>b$.
\end{Remark}

\begin{Example}
    Here we give an example of a Hoffman colourable $G$ and a $d$-regular graph $H$ such that $G \boxtimes H$ has no $d$-improper Hoffman colouring. Take $G=C_4$ (since $G$ is bipartite, it has a Hoffman colouring) and $H=K_{d,d}$. Then \[\lambda_1(C_4 \boxtimes K_{d,d})=3d+2, \lambda_n(C_4 \boxtimes K_{d,d})= \min\{-2(d+1)+d, 2(-d+1)-d\}= -3d+2.\]
   Moreover, $\chi^d(C_4 \boxtimes K_{d,d})=2$ since it has maximum degree $3d+2$ implying we need at least two colours and we can extend the colouring of $C_4$ to $C_4 \boxtimes K_{d,d}$. Thus 
    \[2  = \chi^d(C_4 \boxtimes K_{d,d}) > \frac{6d}{4d+2}\] 
    implying that $C_4 \boxtimes K_{d,d}$ has no Hoffman colouring.
\end{Example}

Below we construct an infinite family of $d$-improper Hoffman colourable graphs, without using strong products.

\begin{Example}
Let $H$ be a $2k$-regular graph with $2$-factorization $C_1,\ldots, C_k$; here the $C_i$s are edge-disjoint, $2$-regular, spanning subgraphs.  Let $G= L(H)$. We see that $G$ is $2(2k-1)$-regular and thus $\lambda_1(G) = 2(2k-1)$ and $\lambda_n = -2$, since $G$ is a line graph. Moreover, $\chi^2(G) \leq k$ as we can take $E(C_i)$ as our colour classes. Then we see that 
\[\chi^2(G) \leq k = \frac{2(2k-1)+2}{2+2}= \frac{\lambda_1-\lambda_n}{2-\lambda_n} \leq \chi^2(G).\]
Hence,  $G$ is $2$-improper Hoffman colourable. \end{Example}

Since $\chi^{\underline{d+1}}(G) \geq \chi^d(G) \geq \frac{\lambda_1 - \lambda_n}{d-\lambda_n}$, the generalized Hoffman bound $\frac{\lambda_1 - \lambda_n}{d-\lambda_n}$ is also a lower bound for the $(d+1)$-clustered chromatic number of a graph $G$. If the Hoffman bound for the $t$-clustered chromatic number is tight for some graph $G$, a colouring with $\chi^{\underline{t}}(G)$ colours is called a \textsl{$t$-clustered Hoffman colouring}. From \Cref{Lem: construct Hoffman colourable} we obtain the following corollary.

\begin{Cor}
   If $G$ admits a Hoffman colouring, then $G \boxtimes K_{d+1}$ admits a $(d+1)$-clustered Hoffman colouring. \qedd
\end{Cor}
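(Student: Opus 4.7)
The plan is to combine \Cref{Lem: construct Hoffman colourable} with the sandwich of chromatic numbers from \Cref{colour ineq}. A $(d+1)$-clustered Hoffman colouring of $G \boxtimes K_{d+1}$ is, by definition, a $(d+1)$-clustered colouring that attains the generalized Hoffman bound $\frac{\lambda_1-\lambda_n}{d-\lambda_n}$ applied to $G \boxtimes K_{d+1}$; this bound is a valid lower bound on $\chi^{\underline{d+1}}$ because $\chi^{\underline{d+1}}(H) \geq \chi^d(H)$ for every graph $H$, as already noted in \Cref{colour ineq}.

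First, I would exhibit the candidate colouring. Given a proper Hoffman colouring $c$ of $G$, lift it to $G \boxtimes K_{d+1}$ via $(v,i) \mapsto c(v)$. For any $(v,i)$, the monochromatic component containing it is precisely the clone $\{v\}\times V(K_{d+1})$: indeed, any $(w,j)$ with $w\neq v$ and $c(w)=c(v)$ is non-adjacent to $(v,i)$ since $v\not\sim w$ in $G$ by properness of $c$. Each such component has size exactly $d+1$, so the lifted colouring is $(d+1)$-clustered and uses $\chi(G)$ colours.

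Second, the eigenvalue computation in the proof of \Cref{Lem: construct Hoffman colourable} already yields
\[
\chi(G) \;=\; 1 - \frac{\lambda_1(G)}{\lambda_n(G)} \;=\; \frac{\lambda_1(G \boxtimes K_{d+1}) - \lambda_n(G \boxtimes K_{d+1})}{d - \lambda_n(G \boxtimes K_{d+1})}
\]
whenever $G$ is Hoffman colourable. The right-hand side is the Hoffman lower bound on $\chi^{\underline{d+1}}(G \boxtimes K_{d+1})$, while the lifted colouring above gives a matching upper bound. Equality is therefore forced, and the lifted colouring is a $(d+1)$-clustered Hoffman colouring.

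There is no genuine obstacle: the corollary is a direct synthesis of \Cref{colour ineq}, which provides $\chi^{\underline{d+1}}(G \boxtimes K_{d+1}) \leq \chi(G)$ via the lift, together with the eigenvalue calculation in \Cref{Lem: construct Hoffman colourable}, which identifies $\chi(G)$ with the Hoffman-type lower bound on $\chi^{\underline{d+1}}(G \boxtimes K_{d+1})$. The only subtlety worth flagging is to record that the bound in \Cref{generalizedHoffmanbound} transfers to $\chi^{\underline{d+1}}$ by monotonicity, which is immediate.
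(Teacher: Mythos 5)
Your argument is correct and is essentially the paper's intended proof: the corollary is stated as an immediate consequence of Lemma \ref{Lem: construct Hoffman colourable} together with the observation (made just before the corollary) that $\chi^{\underline{d+1}} \geq \chi^{d}$ transfers the generalized Hoffman bound to the clustered setting, and the lifted colouring supplies the matching upper bound. Nothing is missing.
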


Since every $(d+1)$-clustered Hoffman colouring is also a $d$-improper Hoffman colouring, the graphs induced by colour classes should be $d$-regular. Since the connected components have at most $d+1$ vertices, every colour class is a disjoint union of $K_{d+1}$. Though this suggests that graphs with $(d+1)$-clustered Hoffman colourings  are strong products $H \boxtimes K_{d+1}$, of some graph $H$, this is not true and an example is given below. 

\begin{Example}
    Let $H$ be the Paley(9) graph. Take $G= L(H)$. We compute that the eigenvalues of $G$ are $6, 3^{(4)}, 0^{(4)}, (-2)^{(9)}$, where the multiplicities are given in the superscripts. Moreover, $\chi^{\underline{3}}(G) \leq 2$ see \Cref{Colouring Paley(9)}. The $(d+1)$-clustered Hoffman bound gives
\[\chi^{\underline{3}}(G) \geq \chi^2(G) \geq \frac{6+2}{2+2}=2.\]
Thus we see that $G$ has a 3-clustered Hoffman colouring and since $-1$ is not an eigenvalue of $G$ it is not isomorphic to $H \boxtimes K_3$ for any $H$.

    \begin{figure}[ht]
        \centering
        \includegraphics[width=0.3\linewidth]{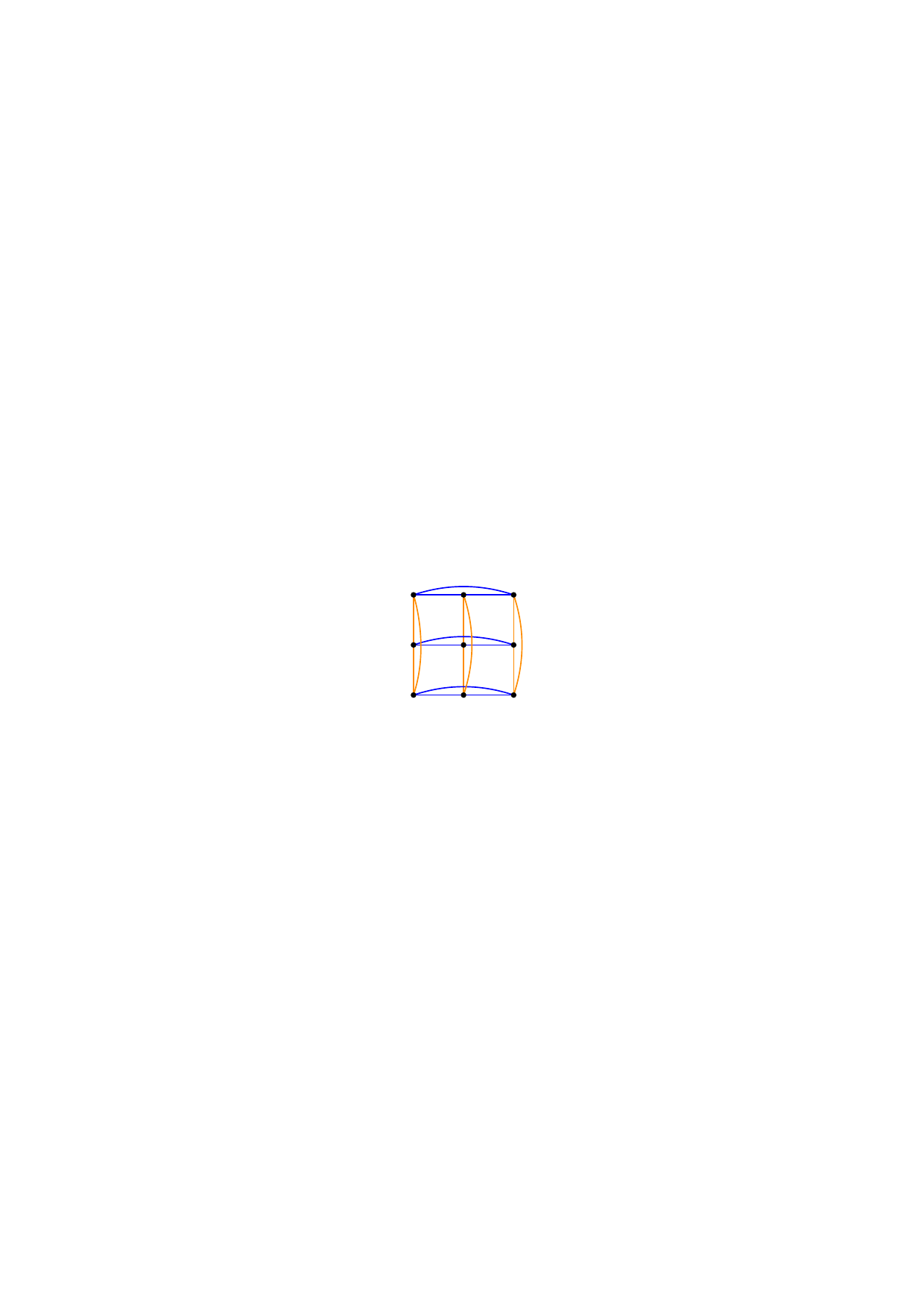}
        \caption{A $3$-clustered edge colouring of Paley(9) with 2 colours.}\label{Colouring Paley(9)}
    \end{figure}
\end{Example}

\subsection{Other spectral bounds}
The second bound that we will generalize to the $d$-improper chromatic number is the inertia bound of Cvetkovi\'{c} \cite{cvetkovic1972chromatic}. This bound uses the number of positive and negative eigenvalues of  various matrices compatible with $G$, including its adjacency matrix. In the case of $d$-improper chromatic number, we will look at the number of eigenvalues that are (strictly) greater than $d$ respectively (strictly) smaller than $-d$. The number of eigenvalues of a matrix $A$ greater than $d$ is denoted by $n_d^{+}(A)$ and $n_d^-(A)$ is used for the number of eigenvalues of $A$ smaller than $-d$. Lastly, $\alpha_d(G)$ denotes the size of the maximal induced subgraph of $G$ with maximum degree at most $d$. 

\begin{Theorem}[$d$-improper inertia bound]\label{gen inertia} Let $G$ be a graph and let $A$ be a symmetric matrix such that $|A_{u,v}| \leq 1$ for all $u$ and $v$ and $A_{u,v}=0$ if $u \not \sim v$. Then $\alpha^d(G) \leq \min\{n-n_d^+(A), n-n_d^-(A)\}$. 
\end{Theorem}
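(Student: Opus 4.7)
The plan is to apply Cauchy interlacing to the principal submatrix of $A$ indexed by a maximum $d$-sparse set, mimicking Cvetkovi\'c's classical inertia argument but replacing the zero-matrix input by a spectral-radius bound in terms of $d$. Let $S \subseteq V(G)$ be a set of size $k := \alpha^d(G)$ that induces a subgraph of maximum degree at most $d$, and let $M := A[S]$ denote the corresponding $k \times k$ principal submatrix. By hypothesis $M$ is symmetric with entries of absolute value at most $1$ and vanishes on pairs of non-adjacent vertices (in particular on the diagonal, since $G$ is loopless), while each row of $M$ has at most $d$ nonzero entries because $G[S]$ has maximum degree at most $d$.

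The first key step is to observe that every Gershgorin disk of $M$ is centered at $0$ with radius at most $d$, so every eigenvalue of $M$ lies in $[-d,d]$; in particular $n_d^+(M)=n_d^-(M)=0$. Cauchy interlacing applied to the principal submatrix $M$ of the $n \times n$ matrix $A$ then yields
\[\lambda_i(A) \geq \lambda_i(M) \geq \lambda_{i+n-k}(A) \qquad \text{for } 1 \leq i \leq k.\]
Suppose $n_d^+(A) > n-k$; then $\lambda_{n-k+1}(A) > d$, and hence $\lambda_1(M) \geq \lambda_{n-k+1}(A) > d$, contradicting $\lambda_1(M) \leq d$. Thus $n_d^+(A) \leq n-k$, i.e.\ $\alpha^d(G) \leq n - n_d^+(A)$. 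Since $-A$ satisfies the same hypotheses as $A$ and $n_d^-(A) = n_d^+(-A)$, the identical argument applied to $-A$ yields $\alpha^d(G) \leq n - n_d^-(A)$, and combining these gives the stated bound.

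There is no serious obstacle. The only point requiring care is the spectral-radius bound $\|M\|\leq d$, which reduces to the elementary observation that a symmetric matrix with zero diagonal, entries bounded by $1$, and at most $d$ nonzeros per row has operator norm at most $d$ (equivalently, by $\|M\|_{\infty}\leq d$ and symmetry). Once that is in place, the proof is a direct adaptation of Cvetkovi\'c's classical interlacing derivation, with the role of ``the principal submatrix on an independent set is zero'' played by ``the principal submatrix on a $d$-sparse set has spectrum in $[-d,d]$.''
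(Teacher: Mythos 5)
Your proposal is correct and follows essentially the same route as the paper: take the principal submatrix on a maximum $d$-sparse set, bound its spectrum in $[-d,d]$ via the row-sum/Gershgorin observation, and apply Cauchy interlacing to conclude that $A$ has at least $\alpha^d(G)$ eigenvalues in $[-d,\infty)$ and at least $\alpha^d(G)$ in $(-\infty,d]$. The only cosmetic difference is that you phrase the final step as a contradiction via $\lambda_{n-k+1}(A)$ rather than counting eigenvalues directly, which changes nothing of substance.
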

\begin{proof}
 Let $X$ be a principle submatrix of $A$ where the rows and columns correspond to  an induced subgraph of $G$ of maximum degree $d$ on $m$ vertices. Then all eigenvalues of $X$ lie in the interval $[-d,d]$ as at most $d$ of the $X_{u,v}$ per row are non-zero and the non-zero ones have absolute value at most $1$. Moreover, the eigenvalues of $A_X$ interlace those of $A$ implying
 \[
\lambda_i(A) \geq \lambda_i(X) \geq \lambda_{n-m+i}(A).
\]
 Thus, $A$ has at least $m$ eigenvalues smaller than or equal to $d$ and $A$ also has at least $m$ eigenvalues larger than or equal to $-d$. Hence, $\alpha^d \leq n-n_d^+$ and $\alpha^d \leq n-n_d^-$.
\end{proof}

\begin{Remark}
    Although this bound sounds weak, there are examples for which it is tight.  For example, take the Bowtie graph. Then the orange vertices induce a $1-$improper independent set of size $4$. The eigenvalues of this graph are $1^{(1)}$, $(-1)^{(2)}$, and  the two roots of $x^2 - x - 4$, which are  $-1.561553, 2.561553$ rounded to $6$ decimal places. Hence, the bound is tight for this graph.
\end{Remark}
\begin{figure}[ht]
    \centering
    \includegraphics[width=0.3\linewidth]{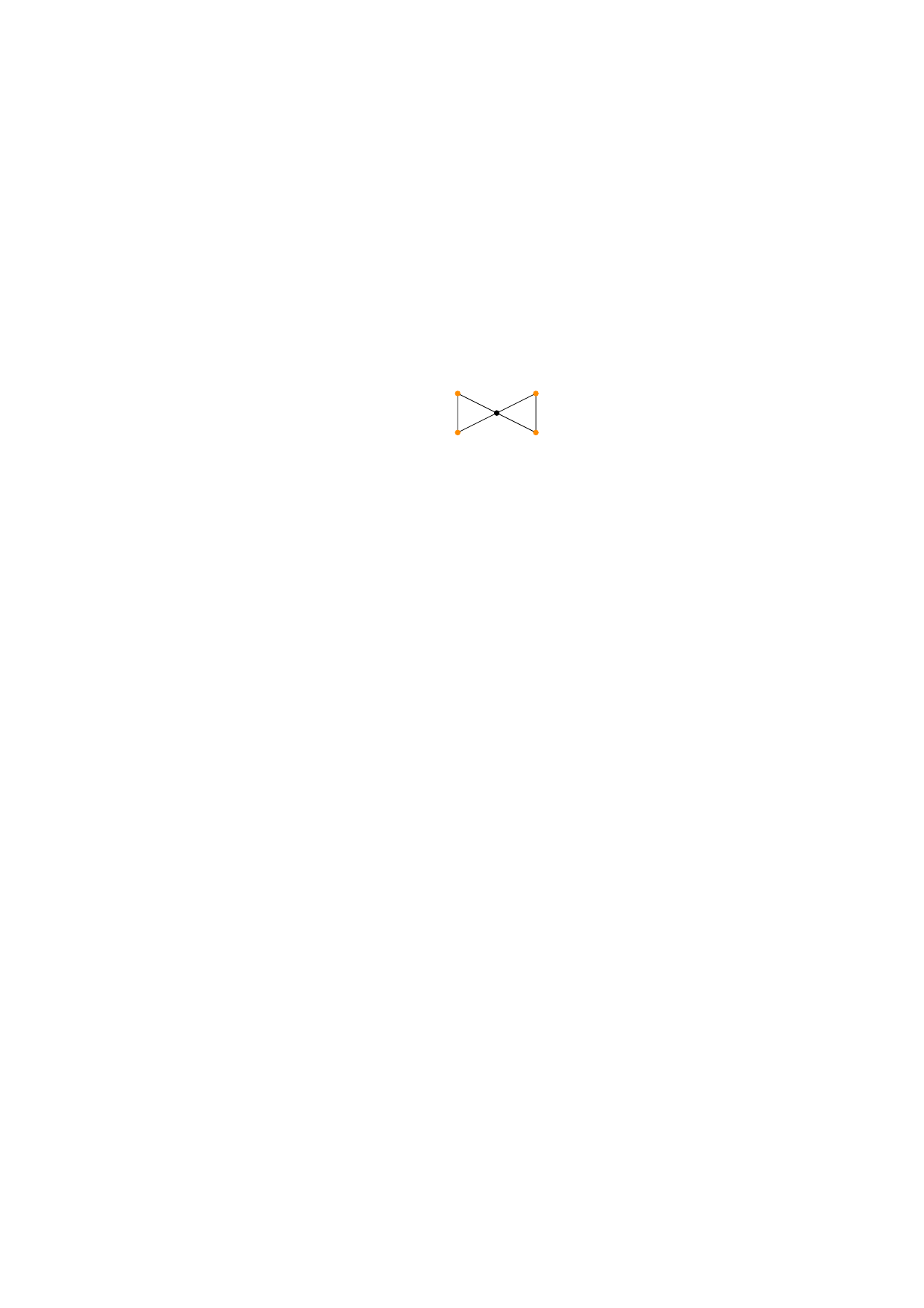}
    \caption{The Bowtie graph with a $1$-improper independent set.}
\end{figure}

\begin{Cor}\label{Cor chromatic inertia}
 Let $G$ be a graph and let $A$ be a symmetric matrix such that $|A_{u,v}| \leq 1$ for all $u$ and $v$ and $A_{u,v}=0$ if $u \not \sim v$. Then
   $\chi^d(G) \geq \left \lceil \max \left\{\frac{n}{n-n_d^+(A)},\frac{n}{ n-n_d^{-}(A)}\right\} \right\rceil$
\end{Cor}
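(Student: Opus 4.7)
The plan is to deduce the chromatic bound from the independence bound in \Cref{gen inertia} by the standard pigeonhole argument: any $d$-improper colouring partitions $V(G)$ into colour classes, each of which induces a subgraph of maximum degree at most $d$, hence has size at most $\alpha^d(G)$. Consequently $\chi^d(G)\cdot \alpha^d(G) \ge n$, so $\chi^d(G) \ge n/\alpha^d(G)$.

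Next I would invoke \Cref{gen inertia}, which gives
\[
\alpha^d(G) \;\le\; \min\{\,n-n_d^+(A),\; n-n_d^-(A)\,\}.
\]
Combining with the previous inequality yields
\[
\chi^d(G) \;\ge\; \frac{n}{\alpha^d(G)} \;\ge\; \frac{n}{\min\{n-n_d^+(A),\; n-n_d^-(A)\}} \;=\; \max\!\left\{\frac{n}{n-n_d^+(A)},\,\frac{n}{n-n_d^-(A)}\right\}.
\]
Finally, since $\chi^d(G)$ is an integer, the right-hand side can be rounded up to its ceiling, which produces the stated bound.

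There is no real obstacle here; the only minor points to verify are (i) that \Cref{gen inertia} applies unchanged to the matrix $A$ in the hypothesis, and (ii) that the pigeonhole step uses the correct notion of $\alpha^d(G)$, namely the maximum size of a vertex subset whose induced subgraph has maximum degree at most $d$ (which is exactly the parameter introduced just before \Cref{gen inertia}). Both are immediate from the definitions, so the corollary is essentially a one-line consequence of the $d$-improper inertia bound together with the pigeonhole principle.
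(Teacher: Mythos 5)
Your proposal is correct and follows exactly the paper's own argument: the paper's proof is the one-liner ``Since $\chi^d(G) \geq \lceil n/\alpha^d(G) \rceil$, the result follows from \Cref{gen inertia},'' which is precisely your pigeonhole step combined with the $d$-improper inertia bound. You have merely spelled out the details more explicitly.
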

\begin{proof}
Since $\chi^d(G) \geq \lceil\frac{n}{\alpha^d(G)} \rceil$, the result follows from \Cref{gen inertia}.
\end{proof}

\begin{Remark} The bound of \Cref{Cor chromatic inertia} is tight for the graph $2K_2 \nabla 2K_2$ (see \Cref{Join 2K2}), where $\nabla$ stands for the join. Let $A$ be the matrix in \Cref{weighted adjacency}, then $A$ satisfies all conditions of \Cref{gen inertia} for the graph $2K_2 \nabla 2K_2$. The eigenvalues of $A$, rounded to five decimal places, are
\[-2.61434, \, -2.36510, \, -1.50854, \, -1.00001, \, -0.27300, \, 1.16723, \, 2.57562,  \,4.01815.\]
In particular, $A$ has four eigenvalues smaller than $-1$ implying $\alpha^d(2K_2 \nabla 2K_2) \leq 4$ implying $\chi^1(2K_2 \nabla 2K_2)\geq \frac{8}{4}= 2$. \Cref{Join 2K2} shows that $\chi^1(2K_2 \nabla 2K_2) \leq 2$ implying that the bound is tight for the graph $2K_2 \nabla 2K$ and $d=1$.

\begin{figure}[ht]
\begin{subfigure}{0.4 \linewidth}
    \centering
     \includegraphics[width=0.55\linewidth,page= 1]{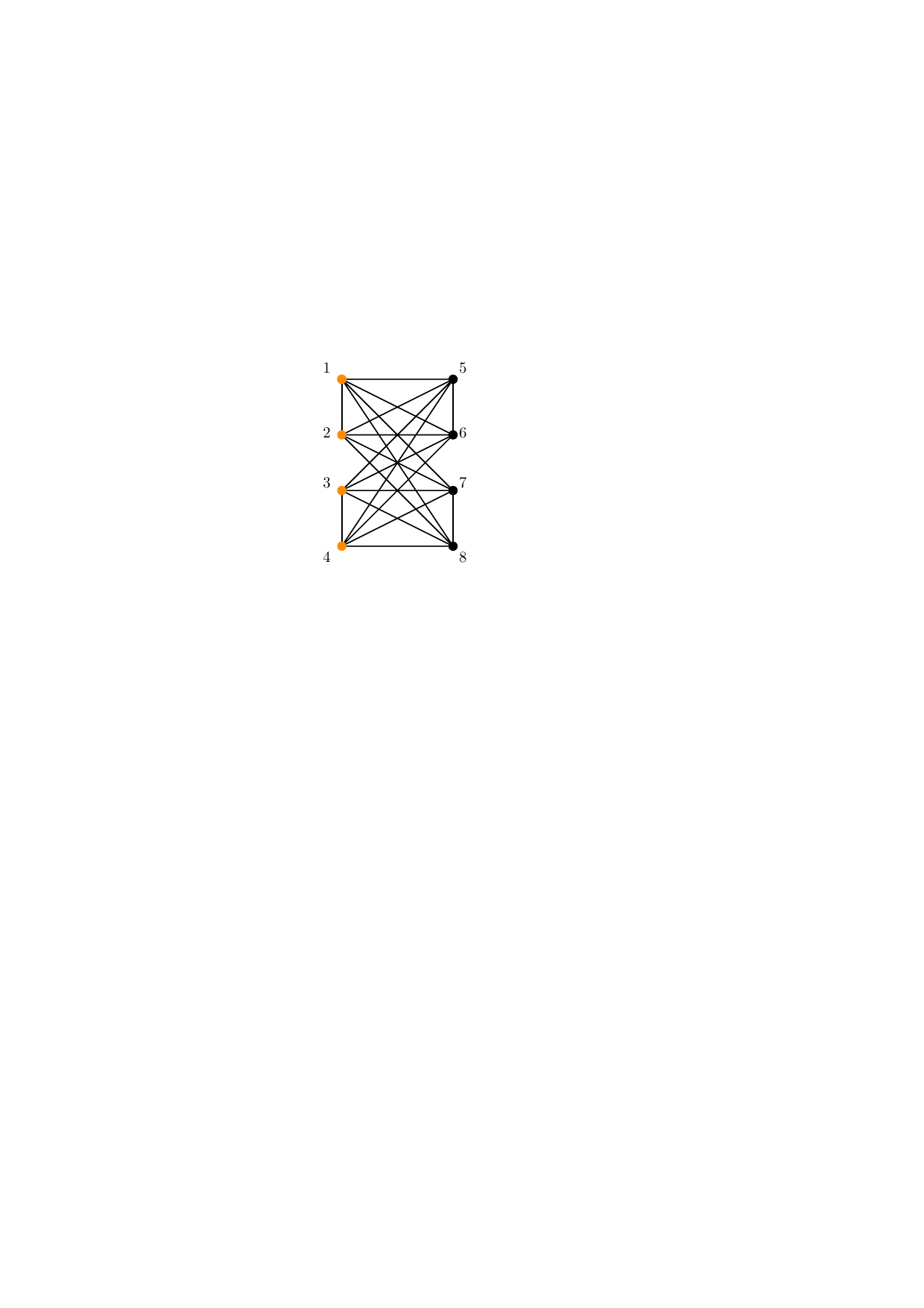}
    \caption{A $1$-improper colouring of the graph $2K_2 \nabla 2K_2$.}
    \label{Join 2K2}
\end{subfigure}\hfill
\begin{subfigure}{0.55 \linewidth}
\centering
\[\left(\begin{array}{cccccccc}
0 & 1 & 0 & 0 & 1 & 1 & -\frac{99}{100} & 1 \\
1 & 0 & 0 & 0 & 1 & 1 & 1 & 1 \\
0 & 0 & 0 & 1 & -1 & 1 & 1 & 1 \\
0 & 0 & 1 & 0 & -1 & \frac{99}{100} & 1 & 1 \\
1 & 1 & -1 & -1 & 0 & -1 & 0 & 0 \\
1 & 1 & 1 & \frac{99}{100} & -1 & 0 & 0 & 0 \\
-\frac{99}{100} & 1 & 1 & 1 & 0 & 0 & 0 & 1 \\
1 & 1 & 1 & 1 & 0 & 0 & 1 & 0
\end{array}\right)\]
\caption{A symmetric matrix $A$ such that $|A_{u,v}| \leq 1$ for all $u,v$ and $A_{u,v} =0$ if $u \not\sim v$ in the graph $2K_2 \nabla 2K_2$.}\label{weighted adjacency}
\end{subfigure}
\caption{The $1$-improper inertia bound is tight for the graph $2K_2 \nabla 2K_2$.}
\end{figure}
\end{Remark}

Also the multi-eigenvalue lower bounds of Elphick and Wocjan \cite{elphick2015unified} are generalizable to $d$-improper colourings. As these bounds are a generalization of bounds found by Nikiforov \cite{nikiforov2007chromatic} and Kolotilina \cite{kolotilina2011inequalities}, these bounds can also be adapted to hold for $d$-improper colourings. 

Before we can state our theorem, we first need to define some notation. Let $D$ be the diagonal matrix containing the degrees of the vertices of $G$, so $D_{uu} = d(u)$. Then $L=D-A$ is called the \textsl{Laplacian of $G$} and $Q=D+A$ is called the \textsl{signless Laplacian of $G$}. Suppose the eigenvalues of $L$ are denoted by $\mu_1 \geq \ldots \geq \mu_n = 0$ and the eigenvalues of $Q$ by $\theta_1 \geq \ldots \geq \theta_n \geq 0$. 
\begin{Theorem}\label{All Wocjan Elphick bounds}
 For all graphs $G$ and positive integers $d$ and $m \leq n$, the $d$-improper chromatic number of $G$ satisfies
  \begin{align}
\chi^d(G) &\geq 1+ \frac{-dm + \sum_{i=1}^m \lambda_i}{dm-\sum_{i=1}^m \lambda_{n+1-i}} \label{generalization Wocjan}\\
 \chi^d(G) &\geq 1+\frac{-dm+ \sum_{i=1}^m \lambda_i}{dm+ \sum_{i=1}^m (\mu_i -\lambda_i)}\label{generalization Nikoforov}\\ 
\chi^d(G) &\geq 1+\frac{-dm+ \sum_{i=1}^m \lambda_i}{dm+ \sum_{i=1}^m (\lambda_i+\mu_i- \theta_i)} \label{gen Kolotilina}\\ 
\chi^d(G) &\geq 1+\frac{-dm+ \sum_{i=1}^m \lambda_i}{dm+ \sum_{i=1}^m (\lambda_i + \mu_{n+1-i} - \theta_{n+1-i})}\label{generalization Kolotilina}
\end{align}
\end{Theorem}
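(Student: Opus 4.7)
The plan is to adapt the block-decomposition method of Elphick and Wocjan \cite{elphick2015unified} to the $d$-improper setting. Fix an optimal $d$-improper colouring of $G$ with $k := \chi^d(G)$ classes $V_1, \dots, V_k$, and for each $j$ let $P_j$ denote the orthogonal projection onto the coordinates in $V_j$. For any Hermitian matrix $M$ on $V(G)$, write $M_{\mathrm{on}} := \sum_{j=1}^k P_j M P_j$ for its block-diagonal part with respect to the partition. With $\omega := e^{2\pi i / k}$ and $U := \sum_j \omega^{j-1} P_j$, the averaging identity
\[
k\, M_{\mathrm{on}} \;=\; \sum_{s=0}^{k-1} U^s M U^{-s}
\]
holds, and each summand on the right is cospectral with $M$. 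The main technical step is the inequality, valid for every Hermitian $M$ and every $m \in \{1,\dots,n\}$,
\[
\sigma_m(M) + (k-1)\,\tau_m(M) \;\leq\; k\,\sigma_m(M_{\mathrm{on}}),
\]
where $\sigma_m$ and $\tau_m$ denote the sum of the top and bottom $m$ eigenvalues; it follows by applying Lidskii's inequality $\sigma_m(X+Y) \geq \sigma_m(X) + \tau_m(Y)$ inductively to the identity above.

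For inequality (\ref{generalization Wocjan}), apply the lemma with $M = A$. Each diagonal block of $A_{\mathrm{on}}$ is the adjacency matrix of a subgraph of maximum degree at most $d$, so its eigenvalues lie in $[-d,d]$ and $\sigma_m(A_{\mathrm{on}}) \leq md$. Rearranging $\sigma_m(A) + (k-1)\tau_m(A) \leq kmd$ yields (\ref{generalization Wocjan}).

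For the Nikiforov and Kolotilina generalisations (\ref{generalization Nikoforov})--(\ref{generalization Kolotilina}) the same scheme is applied but with $M$ replaced by carefully chosen Hermitian combinations of $A$, $L = D - A$, and $Q = D + A$, following the $d=0$ proofs of Nikiforov \cite{nikiforov2007chromatic}, Kolotilina \cite{kolotilina2011inequalities}, and Elphick--Wocjan \cite{elphick2015unified}. The key structural input that keeps the $d$-improper case workable is that each diagonal block of $L_{\mathrm{on}} = D - A_{\mathrm{on}}$ decomposes as a nonnegative diagonal matrix plus the Laplacian of the subgraph induced by the corresponding class (and analogously for $Q_{\mathrm{on}} = D + A_{\mathrm{on}}$), so that $L_{\mathrm{on}}, Q_{\mathrm{on}} \succeq 0$ and $-dI \preceq A_{\mathrm{on}} \preceq dI$. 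Combining this positivity with the pinching inequality $\sigma_m(M_{\mathrm{on}}) \leq \sigma_m(M)$ and Lidskii-type estimates comparing $\sigma_m(L_{\mathrm{on}})$, $\sigma_m(Q_{\mathrm{on}})$ to $\sigma_m(L)$, $\sigma_m(Q)$ with a $+md$ slack coming from $-dI \preceq A_{\mathrm{on}} \preceq dI$, the main lemma translates into each of (\ref{generalization Nikoforov})--(\ref{generalization Kolotilina}). Specialised to $d = 0$ we have $A_{\mathrm{on}} = 0$, all corrections vanish, and the classical proofs are recovered.

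The principal obstacle is algebraic bookkeeping: for each of (\ref{generalization Nikoforov})--(\ref{generalization Kolotilina}) one must identify the correct linear combination of $A$, $L$, and $Q$ to which to apply the main lemma so that, after substituting the pinching and Lidskii bounds on $\sigma_m(M_{\mathrm{on}})$, the resulting denominator matches the stated expression $dm + \sum_i(\mu_i - \lambda_i)$, $dm + \sum_i(\lambda_i + \mu_i - \theta_i)$, or $dm + \sum_i(\lambda_i + \mu_{n+1-i} - \theta_{n+1-i})$ exactly, rather than some coarser quantity. In effect, each appeal to $A_{\mathrm{on}} = 0$ in the proper-colouring proofs is replaced by a Weyl or Lidskii estimate contributing precisely the $+dm$ correction, cleanly interpolating the $d = 0$ statements.
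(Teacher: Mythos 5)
Your averaging identity, your main inequality $\sigma_m(M)+(k-1)\tau_m(M)\leq k\,\sigma_m(M_{\mathrm{on}})$, and your derivation of \eqref{generalization Wocjan} from $\sigma_m(A_{\mathrm{on}})\leq dm$ are all correct and coincide with the paper's argument (the paper writes $B=\sum_s U_sAU_s^{\dag}=kA_{\mathrm{on}}$ and bounds $\lambda_1(B)\leq kd$ by a row-sum count rather than blockwise spectral radius, but this is the same calculation). The gap is in \eqref{generalization Nikoforov}--\eqref{generalization Kolotilina}: you never identify the ``carefully chosen Hermitian combinations,'' and the mechanism you gesture at --- positivity of $L_{\mathrm{on}}$ and $Q_{\mathrm{on}}$ together with the pinching inequality $\sigma_m(M_{\mathrm{on}})\leq\sigma_m(M)$ --- is not what produces those denominators, and I do not see how it could: nothing in that sketch generates the mixed terms $\lambda_i^{\downarrow}(D-A)+\lambda_i-\lambda_i^{\downarrow}(D+A)$ with the correct signs. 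Asserting that the bookkeeping ``must'' close because the $d=0$ case does is exactly the step that needs to be checked, since the $+dm$ correction enters multiplied by $k/(k-1)$ and has to be distributed between numerator and denominator in the right way.

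What actually works (and is what the paper does) is simpler than your sketch: take $M=X-A$ for a \emph{real diagonal} matrix $X$. Since $X$ commutes with every $U^s$, the averaging identity gives $\sum_{s=1}^{k-1}U^s(X-A)U^{-s}=(k-1)X+A-kA_{\mathrm{on}}$, and Ky Fan subadditivity plus $\sigma_m(kA_{\mathrm{on}})\leq kdm$ yields $\sigma_m(X-A)\geq\sigma_m\bigl(X+\tfrac{1}{k-1}A\bigr)-\tfrac{kdm}{k-1}$. A second application of subadditivity to the splitting $X+\tfrac{1}{k-1}A=(X+A)-\tfrac{k-2}{k-1}A$ then isolates $k$ and gives $k\geq 1+\bigl(-dm+\sum_i\lambda_i\bigr)/\bigl(dm+\sum_i(\lambda_i^{\downarrow}(X-A)+\lambda_i-\lambda_i^{\downarrow}(X+A))\bigr)$. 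Substituting $X=D$ gives \eqref{gen Kolotilina}, $X=-D$ gives \eqref{generalization Kolotilina}, and $\theta_i\geq\lambda_i$ turns \eqref{gen Kolotilina} into \eqref{generalization Nikoforov}. So your framework is the right one, but the three remaining bounds require this two-step majorization with a commuting diagonal shift, not positivity of the pinched Laplacians; as written, that part of your proposal is a plan rather than a proof.
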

Plugging in $m=1$ in Equation \ref{generalization Wocjan} gives the $d$-improper Hoffman bound. Similarly, $m=1$ in Equation \ref{generalization Nikoforov} corresponds to a generalization of Nikoforov's bound \cite{nikiforov2007chromatic} to the $d$-improper chromatic number and $m=1$ in Equations \ref{gen Kolotilina} and \ref{generalization Kolotilina} correspond to the $d$-improper versions of Kolotilina bounds \cite{kolotilina2011inequalities}. In Appendix \ref{More bounds}, we included the proofs of these statements, which will follow the same line of reasoning as the original proofs of Elphick and Wocjan \cite{elphick2015unified}.

\section{The \texorpdfstring{$d$-improper}{d-improper} chromatic number of \texorpdfstring{$G \boxtimes K_{d+1}$}{G box complete d+1}}\label{Section: Conjecture}

In \Cref{Cor frac makkelijk}, we have shown that the fractional $d$-improper chromatic number of $G \boxtimes K_{d+1}$ equals the fractional chromatic number of $G$. Could it be that this still holds for the corresponding integral parameters? 
We conjecture that this is the case in \Cref{conjecture d improper} (which we restate next for convenience). In this section, we will give some arguments that support our conjecture.

\conjecturedimproper*

In \Cref{colour ineq}, we have shown that $\chi(G) \geq \chi^d(G \boxtimes K_{d+1})$ holds for all graphs $G$. Therefore, to prove the conjecture one must establish the other inequality $\chi(G) \leq \chi^d(G \boxtimes K_{d+1})$. We have done this for special classes of $G$ and provide other evidence that our conjecture holds. 

First, we  look at small values for $d$. For $d=0$, a $0$-improper colouring coincides with a proper colouring and, since $G=G \boxtimes K_1$, the statement holds trivially.
For $d=1$, we observe that  the $1$-improper chromatic number coincides with the $2$-clustered chromatic number since every connected component with maximum degree 1 has size at most 2. In \Cref{Section: proof clustered}, we show that the conjecture holds if we replace the $d$-improper chromatic number with the $(d+1)$-clustered chromatic number. As a consequence, the conjecture also holds for $d=1$.

Next, we show that \eqref{equation conjecture} holds when $\chi(G) \leq 4$, using a neat argument of Buys~\cite{Pjotrcomm}. This implies that \eqref{equation conjecture} holds when $G$ is planar.

\begin{Theorem}\label{Lem: Pjotr max degree 4} For every graph $G$ with $\chi(G) \leq 4$ and every integer $d \geq 0$, we have
\[\chi(G) = \chi^d(G \boxtimes K_{d+1}).\]
\end{Theorem}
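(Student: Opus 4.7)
By Lemma~\ref{colour ineq} we have $\chi(G) \geq \chi^d(G \boxtimes K_{d+1})$ for every $G$, so I focus on the reverse inequality when $\chi(G) \leq 4$. Equivalently, by contrapositive, I will show that whenever $G \boxtimes K_{d+1}$ admits a $d$-improper $k$-colouring $c$ with $k \in \{1,2,3\}$, one has $\chi(G) \leq k$. The main tool is the following local inequality: writing $n_j(v) = |\{i : c(v,i) = j\}|$ and $S_v = \{j : n_j(v) \geq 1\}$, the count of monochromatic neighbours of a copy of $v$ of colour~$j$ (which consists of $n_j(v)-1$ other copies of $v$ plus $n_j(w)$ copies of $w$ for each neighbour $w$ of $v$ in $G$) yields
\[ n_j(v) + n_j(w) \leq d+1 \qquad \text{for every } vw \in E(G) \text{ and every } j. \]
Summing this over $j \in S_v$ and using $\sum_{j \in S_v} n_j(v) = d+1$ gives the degree-type estimate $\sum_{w \sim v} \sum_{j \in S_v} n_j(w) \leq (d+1)(|S_v| - 1)$.

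The cases $k \in \{1, 2\}$ are quick: $k = 1$ forces $G$ to be edgeless, while $k = 2$ follows either from the clique bound $\omega(G) \leq \chi^d(G \boxtimes K_{d+1})$ or, more structurally, from the partition $V(G) = V_{\{1\}} \cup V_{\{2\}} \cup V_{\{1,2\}}$, in which the key inequality forces each $V_{\{j\}}$ to be independent with no $G$-edge to $V_{\{1,2\}}$ and the degree estimate forces $V_{\{1,2\}}$ to have maximum degree~$1$, so $\chi(G) \leq 2$. The substantial case is $k = 3$, which rules out $\chi(G) = 4$. Partition $V(G)$ into seven classes $V_S$ indexed by $\emptyset \neq S \subseteq \{1,2,3\}$. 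The key inequality together with the degree estimate give: (a) each $V_{\{j\}}$ is independent, with no $G$-edge to any $V_S$ containing~$j$; (b) every vertex of $V_{\{1,2,3\}}$ has $G$-degree at most~$2$ (apply the degree estimate with $|S_v|=3$); and (c) every vertex of $V_{\{i,j\}}$ has at most one same-type neighbour, and if it has one then it has no further neighbour in $Y := V(G) \setminus (V_{\{1\}} \cup V_{\{2\}} \cup V_{\{3\}})$. By (a), assigning colour~$j$ to $V_{\{j\}}$ is forced and is compatible with any choice $f(v) \in S_v$ on $Y$, so the task reduces to properly list-colouring $G[Y]$ with lists $S_v$ of size~$2$ or~$3$.

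The main obstacle is to realize this list colouring. The same-type edges inside each $V_{\{i,j\}}$ form a matching by (c) and are trivially $2$-list-coloured, and (c) implies these matched vertices have no other neighbour in $Y$, so they can be handled independently. Every remaining edge of $G[Y]$ is either ``cross-type'' between lists $\{i,j\}$ and $\{i,k\}$ (sharing a single colour and forbidding only one of the four ordered colour pairs), or involves a vertex of $V_{\{1,2,3\}}$ (with list $\{1,2,3\}$ and $G$-degree at most~$2$). The potential Erd\H{o}s--Rubin--Taylor obstruction---an odd cycle carrying identical $2$-element lists on every vertex---is ruled out by (c), which forces consecutive vertices of any cycle in $G[Y]$ to be of distinct types. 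A DFS-based greedy argument, completing each component via the third colour furnished by a $V_{\{1,2,3\}}$-vertex or by the alternation of distinct $V_{\{i,j\}}$-types along an odd cycle, then yields the list colouring and hence $\chi(G) \leq 3$, completing the proof.
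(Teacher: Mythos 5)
Your proposal is correct, but it is organized quite differently from the paper's proof. The paper argues by minimal counterexample: for two colours it shows every vertex of a minimal counterexample has $|c\{v\}|=1$, and for three colours it proves the stronger statement that any $d$-improper $3$-colouring of $G\boxtimes K_{d+1}$ induces a proper colouring of $G$ with colours chosen from $c\{v\}$, by successively deleting vertices with $|c\{v\}|\in\{1,3\}$ (the latter have degree at most two, by the same pigeonhole count you use) and adjacent same-type pairs, until $|c\{v\}|=2$ everywhere and adjacent sets differ, at which point the injection $\{1,2\}\mapsto 1$, $\{1,3\}\mapsto 3$, $\{2,3\}\mapsto 2$ finishes. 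You instead extract explicit quantitative structure valid for \emph{every} colouring --- your inequalities $n_j(v)+n_j(w)\leq d+1$ and $\sum_{w\sim v}\sum_{j\in S_v} n_j(w)\leq(d+1)(|S_v|-1)$ are exactly the counting underlying the paper's pigeonhole steps --- and then solve a list-colouring problem directly: forced colours on singleton types, isolated matched pairs for same-type edges, degree-two greedy for triple-list vertices, and the same canonical injection on the remaining two-list vertices. Both proofs share the same core counting and the same final injection; yours buys an explicit description of what any $d$-improper $3$-colouring of the product must look like, at the price of a longer case analysis, while the minimality argument is shorter. Two small remarks. First, your parenthetical claim that the case $k=2$ ``follows from the clique bound'' is wrong: that bound only gives $\omega(G)\leq 2$, which does not imply $\chi(G)\leq 2$ (consider $C_5$); fortunately your structural alternative for that case is correct, so nothing is lost. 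Second, the closing ``DFS-based greedy'' step is more elaborate than necessary: once same-type edges are confined to isolated matched pairs, applying the fixed injection to all two-element lists and then greedily colouring the degree-at-most-two triple-list vertices already completes the list colouring.
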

\begin{proof} 
We need only show that $\chi^d(G\boxtimes K_{d+1}) \geq \chi(G)$. We will split the argument according to the value of $\chi^d(G\boxtimes K_{d+1})$.

If $\chi^d(G\boxtimes K_{d+1}) =1$, every vertex has at most $d$ neighbours in $G\boxtimes K_{d+1}$, which implies that $G$ has no edges, and so $\chi(G)=1$.
 
Suppose we have a minimal counterexample $G$ such that $\chi^d(G\boxtimes K_{d+1}) =2$ and $\chi(G)>2$. Let $c$ be a $d$-improper colouring of $G\boxtimes K_{d+1}$ with two colours. We define $c\{v\}$ to be the set of distinct elements of  $ \{ c(v, i) \mid i = 1,\ldots, d+1\}$. 
If $|c\{v\}|=2$, then $v$ has at most one neighbour in $G$; for, if $v$ has at least two neighbours in $G$, then $| \{(u,i):u\sim v\}| \geq 2(d+1)$ and thus at least one colour appears at least $d+1$ times in this set, implying that no $(v,i)$ has that colour and $|c\{v\}|=1$. It is easy to see that a minimal counterexample has no vertices of degree one and thus $|c\{v\}|=1$ for all $v$. We can now colour every vertex $v$ with the colour in this set to obtain a proper colouring of $G$.

We will show the stronger statement that every $d$-improper colouring $c$ of $G~\boxtimes~K_{d+1}$ with three colours yields a proper colouring of $G$ where the colour of $v\in V(G)$ is  $c(v, i)$ for some $i$.

Let $H$ be a  minimal counterexample to this stronger statement. 

We again define $c\{v\}$ to be the set of distinct elements of  $\{ c(v, i) \mid i = 1,\ldots, d+~1\}$. Following the same argument as before, we see that if $v$ has at least three neighbours in $G$, then $| \{(u,i):u\sim v\}| \geq 3(d+1)$ and thus at least one colour appears at least $d+1$ times in this set, implying that no $(v,i)$ can have that colour and $|c\{v\}|\leq 2$. Thus if $|c\{v\}| = 3$, then $v$ has at most two neighbours in $H$ and thus $H$ is not a minimal counterexample. 

If there exists $v$ with $|c\{v\}|=1$, then $c\{v\}$ is disjoint from $c\{u\}$ for any neighbour $u$ of $v$. Therefore, $H-v$ would also be a counterexample to this statement. Therefore, $|c\{v\}|=2$ for every $v$. Suppose that $u \sim v$ and $c\{v\}=c\{u\}$. Let $w\notin \{u,v\}$ be adjacent to at least one of $u$ or $v$, then $c\{w\}$ can only contain the colour that is not in $c\{v\}=c\{u\}$. This is a contradiction, since no vertex $w \in V(H)$ has $|c\{w\}| = 1$. Thus, $u$ and $v$ for a connected component of $H$, isomorphic to $K_2$, and $H-u-v$ is also a counterexample. Now it must be the case that $|c\{v\}|=2$ for every $v$ and $c\{v\} \neq c\{u\}$ if $u \sim v$. Colouring the vertices with the set $\{1,2\}$ with colour 1, with set $\{1,3\}$ with colour 3 and $\{2,3\}$ with colour 2, will now result in a proper colouring of $H$.

Thus if $\chi(G) > \chi^d(G\boxtimes K_{d+1})$, we must have $ \chi^d(G\boxtimes K_{d+1}) \geq 4$ and $\chi(G) \geq 5$.
\end{proof}

Next, we will compare the expressions for our $d$-improper spectral bounds of $\chi^d(G \boxtimes K_{d+1})$ that we found in \Cref{Section: spectral bounds} to spectral bounds for $\chi(G)$. We will see that these bounds often coincide. To compare these bounds, we will need to express the eigenvalues of $A(G \boxtimes K_n), L(G~\boxtimes~K_n)$ and $Q(G \boxtimes K_n)$ in terms of those of $A(G), L(G)$, and $Q(G)$ respectively, which we give in \Cref{tab:eigenvalue-conversion}.

\begin{table}[ht]\begin{tabular}{c|c|c |c}
   & Adjacency matrix & Laplacian matrix & Signless Laplacian matrix \\
   \hline
  $G$ & $\lambda_i$ &$\mu_i$ & $\theta_i$\\
  \hline
  \multirow{2}{*}{$G \boxtimes K_n$} & $n\lambda_i+(n-1)$ & $n\mu_i$ & $n\theta_i+2(n-1)$ \\
  & $-1$ & $nd(u_i)+n$ & $(n-2)+nd(u_i)$
\end{tabular}
\caption{The eigenvalues of the different matrices, where $d(u_i)$ is the degree of $u_i$ and $i=1,\ldots, |V(G)|$. This table does not include the multiplicities of these eigenvalues, but every eigenvalue appears in this table \cite{barik2015laplacian}.
\label{tab:eigenvalue-conversion} }
\end{table}

\begin{Remark} Let $G$ be non-empty. Since $\lambda_1 > -1 \geq \lambda_n$, the largest eigenvalue of $G\boxtimes K_d$ is $(d+1)\lambda_1+d$ and the smallest eigenvalue of $G\boxtimes K_d$ is $(d+1)\lambda_n+d$. Now, applying the Hoffman bound for $d$-improper colourings to $G \boxtimes K_{d+1}$ gives us \[
 \chi^d(G \boxtimes K_{d+1}) \geq \frac{(d+1)\lambda_1+d-((d+1)\lambda_n+d)}{d-((d+1)\lambda_n+d)} = \frac{\lambda_1-\lambda_n}{-\lambda_n}= 1-\frac{\lambda_1}{\lambda_n}.\] We obtain that the (generalized) Hoffman bound for $\chi^d(G \boxtimes K_{d+1})$ equals the Hoffman bound for $\chi(G)$.
\end{Remark}

As a consequence, \eqref{equation conjecture} holds for graphs that are Hoffman colourable since
\[1-\frac{\lambda_1}{\lambda_n} = \chi(G) \geq \chi^d(G \boxtimes K_{d+1}) \geq 1-\frac{\lambda_1}{\lambda_n}\]
whence all inequalities must hold with  equality. Applying this trick to equation \ref{first inequality d-improper} gives that all graphs satisfying $\chi(G) = \omega(G)$ also satisfy
\[\omega(G) = \chi(G) \geq \chi^d(G \boxtimes K_{d+1}) \geq \frac{\omega(G \boxtimes K_{d+1})}{d+1}= \frac{\omega(G) \cdot (d+1)}{d+1}=\omega(G).\]
Thus, \eqref{equation conjecture} holds for all $G$ such that $\chi(G) = \omega(G)$.  In particular, our conjecture holds for all perfect graphs.

\begin{Remark}
Since every graph $G$ with at least one edge satisfies $\mu_1(G) \geq \Delta(G)+1$ \cite{grone1994laplacian}, we obtain that $\mu_1(G \boxtimes K_{d+1})=(d+1)\mu_1$. Hence, the $d$-improper version of Nikoforov bound (Equation \ref{generalization Nikoforov} with $m=1$) gives that
\[\chi^d(G \boxtimes K_{d+1}) = 1 + \frac{-d+(d+1)\lambda_1+d}{d+(d+1)\mu_1-((d+1)\lambda_1+d)} \geq 1+\frac{\lambda_1}{\mu_1-\lambda_1},\] which is exactly Nikiforov's bound for $\chi(G)$.
\end{Remark}

\begin{Remark}Furthermore, since $\theta_1 \geq \mu_1 \geq \Delta(G)+1$, we have that $\theta_1(G \boxtimes K_{d+1})=(d+1)\theta_1+2d$. Thus \eqref{gen Kolotilina} with $m=1$, gives that
\[
\begin{split}
    \chi^d(G \boxtimes K_{d+1}) &\geq 1+ \frac{-d+(d+1)\lambda_1+d}{d+(d+1)\lambda_1+d+(d+1)\mu_1-((d+1)\theta_1+2d)}\\
    &=1+\frac{\lambda_1}{\lambda_1+\mu_1-\theta_1},
\end{split}\]
  which is exactly one of Kolotilina's bounds for $\chi(G)$. 
\end{Remark}

\begin{Remark}\label{Last positive remark} Lastly, in  \eqref{generalization Kolotilina} with $m=1$, the smallest eigenvalue of the Laplacian matrix is $0$. Hence, we only obtain the same expression when the smallest eigenvalue of the signless Laplacian of $G \boxtimes K_{d+1}$ is $n\theta_n+2(n-1)$. This is the case if $$n\theta_n+2(n-1) \leq (n-2)\delta(G)$$  or, equivalently, if $\theta_n \leq \delta(G)-1$, where $\delta(G)$ is the minimal degree of $G$. While it is true that $\theta_n \leq \delta(G)$, there are graphs (for example, $K_5$ with a pendant vertex) such that $\theta_n \leq \delta(G)-1$ does not hold.

If equality holds in Kolotilina's bound, then $\lambda_1(G) = \Delta(G)$, which implies that $G$ is regular.  For regular graphs, the inequality $\theta_n \leq \delta(G)-1$ holds, and thus \eqref{equation conjecture} holds for graphs for which Kolotilina's bound is tight.
\end{Remark}

\subsection{The \texorpdfstring{$t$-clustered chromatic numbers}{c-clustered chromatic numbers} of \texorpdfstring{$G \boxtimes K_t$ }{G box complete graph }}\label{Section: proof clustered} 

In this section, we prove a relaxation of \Cref{conjecture d improper} by proving it for the $(d+1)$-clustered chromatic number instead of the $d$-improper chromatic number.

Let us first recall the notation from the proof of \Cref{Lem: Pjotr max degree 4}. Given an arbitrary colouring $c$ of $G \boxtimes K_{t}$, for every $v \in V(G)$ we define $c[v]$ to be the multiset $\{c((v, i)) \mid i=1,\ldots,t\}$ and $c\{v\}$ to be its underlying set.

The following observation is useful for us.
To determine whether a colouring is an $\ell$-clustered colouring, we only need to consider the multisets $c[v]$, since the closed neighbourhood of $(v,j)$ is the same for all $j=1,\dots,t$.

We will specifically prove that, starting with an $\ell t$-clustered colouring of $G \boxtimes K_t$, we are able to obtain an $\ell$-clustered colouring of $G$ by picking colours only from $c\{v\}$.

Our strategy is to prune the original $\ell t$-clustered colouring to  a `nice' colouring so that we can pick our colours using a naive algorithm.
In order to characterize nice colourings, we will introduce an auxiliary graph $\mathcal{I}_{G,c}$ based on a colouring $c$ of $G \boxtimes K_{t}$. The vertices of $\mathcal{I}_{G,c}$ are the vertices of $G$ together with some new vertices $v_C$, where $C$ is a monochromatic connected component of $G \boxtimes K_t$ in the colouring $c$. We let $v\in V(G)$ be adjacent to $v_C$ if there is some $i$ for which $(v,i) \in C$.
In this case, we say that $C$ \textsl{covers} $v$.

We now show how to transform a clustered colouring of $G \boxtimes K_{t}$ into a `nice' colouring.

\begin{Lemma}\label{no hypercycles} 
 If $c$ is an $\ell$-clustered colouring of $G \boxtimes K_{t}$, then there exists some $\ell$-clustered colouring $c_1$ of $G \boxtimes K_{t}$ such that:
 \begin{enumerate}[1)]
  \item $c_1\{v\} \subseteq c\{v\}$ for all $v$, and
  \item $\mathcal{I}_{G,c_1}$ is acyclic.
 \end{enumerate}
 In particular, the number of colours used in $c_1$ is at most the number of colours used in $c$.
\end{Lemma}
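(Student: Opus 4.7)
The plan is to take $c_1$ to be a minimizer of a suitable potential function among valid colourings, and to argue by contradiction. Let $\mathcal{C}$ be the set of $\ell$-clustered colourings $c'$ of $G \boxtimes K_t$ with $c'\{v\} \subseteq c\{v\}$ for every $v \in V(G)$. This set is nonempty since it contains $c$. Pick $c_1 \in \mathcal{C}$ minimising $\Phi(c') := \sum_{v \in V(G)} |c'\{v\}|$. Note that two distinct monochromatic components of $c'$ covering the same vertex $v$ must carry different colours (else they would be joined inside the fiber clique of $v$), so $\Phi(c')$ equals the number of edges of $\mathcal{I}_{G,c'}$.

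Suppose for contradiction that $\mathcal{I}_{G,c_1}$ contains a cycle, and pick a shortest one $v_1 - D_1 - v_2 - D_2 - \cdots - v_k - D_k - v_1$, writing $a_j$ for the colour of $D_j$. The fiber-clique argument above also gives $a_j \neq a_{j+1}$ for each $j$ (indices mod $k$). Let $n_{c_1}(v,x) := |\{i : c_1(v,i) = x\}|$ and set $s^* := \min_j n_{c_1}(v_{j+1}, a_j)$; the minimum is at least $1$ since $v_{j+1}$ must have some copy in $D_j$. Define $c_2$ by a cyclic shift: for each $j \in \{1,\dots,k\}$, recolour $s^*$ of the copies of $v_{j+1}$ currently with colour $a_j$ to colour $a_{j+1}$. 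At any index $j^*$ achieving the minimum, every $a_{j^*}$-copy of $v_{j^*+1}$ has been recoloured, so $c_2\{v_{j^*+1}\} = c_1\{v_{j^*+1}\} \setminus \{a_{j^*}\}$, and hence $\Phi(c_2) < \Phi(c_1)$. For all other vertices $v$ we have $c_2\{v\} \subseteq c_1\{v\} \subseteq c\{v\}$.

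The crux is verifying that $c_2 \in \mathcal{C}$, i.e. that $c_2$ is $\ell$-clustered. For this, we use the structural observation that for any colouring $c'$ and colour $x$, the monochromatic components of $c'$ with colour $x$ correspond bijectively to the connected components of the subgraph $G_x^{c'}$ of $G$ induced on $\{u : n_{c'}(u,x) \geq 1\}$; the component attached to a $G_x^{c'}$-component $H$ has size $\sum_{u \in H} n_{c'}(u,x)$. Every cycle vertex that gains $x$-copies already uses $x$ in $c_1$ (by adjacency to its corresponding cycle component), so $V(G_x^{c_2}) \subseteq V(G_x^{c_1})$; consequently the shift can only split existing $G_x$-components, never merge or enlarge them. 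Moreover the shortest-cycle assumption rules out any chord $v_{j''} - D_j$ with $j'' \notin \{j,j+1\}$, so among the cycle vertices only $v_j$ and $v_{j+1}$ lie in $V(D_j)$. A direct mass count then shows that in each new colour-$a_j$ component the $+s^*$ gain at $v_j$ is either exactly cancelled by the $-s^*$ loss at $v_{j+1}$ in the same component, or absorbed by the mass of $v_{j+1}$ which is removed from the component altogether; either way, the new size is bounded by $|D_j| \leq \ell$, contradicting minimality of $c_1$.

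The main obstacle I anticipate is the bookkeeping when the colours $a_1, \dots, a_k$ are not pairwise distinct (only adjacent pairs need differ). If a colour $x$ equals several $a_j$'s, then multiple cycle components $D_j, D_{j'}$ are of colour $x$ and get simultaneously perturbed. However, distinct monochromatic components with the same colour correspond to disjoint components of $G_x^{c_1}$, so the analysis decouples over them and the bound $\leq \ell$ still applies componentwise. This confirms $c_2 \in \mathcal{C}$, contradicts the minimality of $\Phi(c_1)$, and completes the argument.
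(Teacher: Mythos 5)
Your proposal is correct, and its engine is identical to the paper's: along a shortest cycle $v_1 D_1 v_2 \cdots v_k D_k v_1$ of $\mathcal{I}_{G,c_1}$ you cyclically shift $s^*$ copies of colour $a_j$ at $v_{j+1}$ to colour $a_{j+1}$, where $s^*$ is the minimum multiplicity around the cycle, and you verify $\ell$-clusteredness by the same mass-conservation argument (the total number of copies of $a_j$ carried by the vertex set covered by $D_j$ is unchanged, the support of each colour only shrinks, so every new monochromatic component lies inside an old one of size at most $\ell$). Where you genuinely diverge is the outer logic. The paper iterates the shift, and must therefore prove that no \emph{new} cycles are created in $\mathcal{I}_{G,c'}$ --- the most delicate paragraph of its proof --- to guarantee termination. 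You instead take $c_1$ minimising the potential $\Phi(c') = \sum_{v} |c'\{v\}|$ over all valid colourings and observe that the shift strictly decreases $\Phi$: at an index attaining the minimum, $a_{j^*}$ disappears from the colour set of $v_{j^*+1}$, while the colour gained, $a_{j^*+1}$, was already present there. This extremal framing makes the ``no new cycles'' analysis unnecessary and is arguably a cleaner organisation of the same idea. Two minor remarks: the chord-freeness you extract from the shortest-cycle assumption is not actually needed --- what matters is only that distinct monochromatic components of the same colour cover disjoint vertex sets, which you already use to decouple the colour classes --- and your case split (``cancelled or absorbed'') can be stated uniformly as: the mass of any new component of colour $x$ is at most the total, preserved, mass of the unique old $x$-component containing it.
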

\begin{proof} We will prove that we can eliminate all cycles iteratively. Let $v_1v_{C_1}v_2\ldots v_{C_k}v_{k+1}$, where $v_{k+1}=v_1$, be a smallest cycle in $\mathcal{I}_{G,c_1}$. Let us suppose that the component $C_i$ is monochromatic in the colour $a_i$ for all $i=2,\dots,k+1$.  By the definition of $\mathcal{I}_{G,c}$, we have $a_i, {a_{i-1}} \in c\{v_i\}$ for all $i$. Suppose that $s= \min_i \, (\text{multiplicity of } a_{i} \text{ in } c[v_i] ) $ is attained by $i=j$.

Then we cyclically redistribute the colours according to the following procedure. 
For $i=2\dots,k+1$, we take $s$ of the vertices $(v_i,x)$ of colour $a_{i}$ and recolour them with colour $a_{i-1}$.

Since $a_{i-1} \in c\{v_i\}$ for all $i$, we have $c'\{v_i\} \subseteq c\{v_i\}$ for all $i$.
One important reason for cyclically redistributing the colours in this way is that, by the choice of $j$, $c'[v_j]$ does not have any $a_j$. Hence,  $\mathcal{I}_{G,c'}$ does not contain the edge $v_jv_{C_j}$, nor does it contain the cycle $v_1v_{C_1}v_2\ldots v_{C_k}v_{k+1}$.

Since $c'\{v_i\} \subseteq c\{v_i\}$ for all $i$, we have that if a monochromatic component $C'$ of colour $a$ in the colouring $c'$ covers both $v_i$ and $v_{i'}$, then there is also a monochromatic component $C$ of colour $a$ in the colouring $c$ that covers both $v_i$ and $v_{i'}$.

It follows that, if $v_i$ and $v_{i'}$ are both adjacent to $v_{C'}$ in $\mathcal{I}_{G,c'}$ where $C'$ has colour $a$ in $c'$, then $v_i$ and $v_{i'}$ are both adjacent to some vertex $v_{C}$ in $\mathcal{I}_{G,c}$ where $C$ has colour $a$ in $c$. In both $\mathcal{I}_{G, c}$ and $\mathcal{I}_{G,c'}$ every vertex $v_i$ is adjacent to at most one vertex $v_{C'}$ corresponding to a colour $a$. 

Every closed walk in $\mathcal{I}_{G,c'}$ corresponds to a closed walk in $\mathcal{I}_{G,c}$ by replacing each vertex of the form $v_{C'}$ by a corresponding vertex of the form $v_C$ as implied by the statements above. Since the neighbours $v_{C_{i-1}'}$ and $v_{C'_i}$ of every $v_{i'}$ correspond to different colours in $\mathcal{I}_{G,c'}$, the vertices $v_{C_{i-1}}$ and $v_{C_i}$ are different in $\mathcal{I}_{G,c}$ for every $i$. Hence, there are no additional cycles in $\mathcal{I}_{G,c'}$ as compared to $\mathcal{I}_{G,c}$.

What remains to prove is that $c'$ is an $\ell$-clustered colouring of $G \xbox K_t$. For every $i$ all vertices $(v_i,x)$ and $(v_{i+1}, x')$ with colour $a_i$ belong to the monochromatic connected component $C_i$. Suppose that the monochromatic component $C_i$ of colour $a_i$ in the colouring $c$ covers the vertex set $W \subseteq V(G)$. By the cyclic redistribution, the total multiplicity of $a_i$ within the vertices of $W$ did not change. Recall that there exists a monochromatic component $C$ of colour $a$ in the colouring $c$ that covers both $v_i$ and $v_{i'}$ for each monochromatic connected component $C'$ of colour $a$ in the colouring $c'$ that covers both $v_i$ and $v_{i'}$. Thus, no vertex outside $W$ lies in a monochromatic component of colour $a_i$ in the colouring $c'$ with a vertex of $W$. Hence, every monochromatic connected component has size at most $\ell$.

By repeating this procedure,  we eventually obtain an $\ell$-clustered colouring $c_1$ of $G \boxtimes K_{t}$ such that $\mathcal{I}_{G,c_1}$ is acyclic.
\end{proof}

We will now prove that if for some $\ell$-clustered colouring $c$, the graph $\mathcal{I}_{G,c}$ is acyclic, then $G \boxtimes K_t$ under colouring $c$ contains a monochromatic component that covers few vertices of $G$. This will be a subroutine in our naive colouring algorithm.

\begin{Lemma}\label{small conn component} If $c$ is an $\ell t$-clustered colouring of $G \boxtimes K_{t}$ such that $\mathcal{I}_{G,c}$ acyclic, then there exists a monochromatic component $C$ that covers at most $\ell$ vertices of $G$.
\end{Lemma}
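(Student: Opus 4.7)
The plan is to argue by a simple double-counting contradiction. Suppose every monochromatic component $C$ in $c$ covers at least $\ell+1$ vertices of $G$, i.e.\ $\deg_{\mathcal{I}_{G,c}}(v_C) \geq \ell+1$ for every component $C$. I will combine two inequalities on $m$, the total number of monochromatic components: a lower bound coming from the cluster-size hypothesis, and an upper bound coming from the forest hypothesis on $\mathcal{I}_{G,c}$.

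First, since $c$ is $\ell t$-clustered, each component $C$ satisfies $|C| \leq \ell t$. The components partition $V(G \boxtimes K_t) = V(G) \times [t]$, so
\[
n t \;=\; \sum_{C} |C| \;\leq\; m \cdot \ell t,
\]
where $n = |V(G)|$. Dividing gives $\ell m \geq n$.

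Second, $\mathcal{I}_{G,c}$ is a bipartite graph on the two parts $V(G)$ and $\{v_C\}$, of sizes $n$ and $m$. Since it is acyclic, it is a forest on $n + m$ vertices and therefore has at most $n + m - 1$ edges (assuming $n \geq 1$, which we may; the statement is vacuous otherwise). On the other hand, the number of edges equals $\sum_C \deg_{\mathcal{I}_{G,c}}(v_C) \geq (\ell+1) m$ by the contradiction hypothesis. Hence
\[
(\ell+1) m \;\leq\; n + m - 1, \qquad \text{that is,} \qquad \ell m \;\leq\; n - 1 \;<\; n.
\]

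The two conclusions $\ell m \geq n$ and $\ell m < n$ contradict each other, so some component $C$ must cover at most $\ell$ vertices of $G$, as claimed. The only subtlety in executing this plan is to make sure the two counts refer to the correct objects: that the degree of $v_C$ in $\mathcal{I}_{G,c}$ really is the number of $V(G)$-vertices covered by $C$ (immediate from the definition), and that the inequality $|C|\le \ell t$ is used to bound $m$ from below rather than $|C|$ from above coordinate-wise. Once the two bounds on $\ell m$ are in place, the argument is just arithmetic; I do not anticipate a real obstacle beyond this bookkeeping.
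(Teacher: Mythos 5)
Your proof is correct, and it takes a genuinely different route from the paper's. Both arguments start from the same contradiction hypothesis (every monochromatic component covers at least $\ell+1$ vertices of $G$) and both exploit acyclicity of $\mathcal{I}_{G,c}$, but the mechanisms differ. The paper argues locally: since a component has at most $\ell t$ vertices of $G\boxtimes K_t$ spread over at least $\ell+1$ vertices of $G$, it must cover at least two vertices of $G$ that are also covered by \emph{other} components; this lets one grow a non-backtracking walk $v_1 v_{C_1} v_2 v_{C_2}\ldots$ in $\mathcal{I}_{G,c}$, which in a forest must be a path of distinct vertices, contradicting finiteness of $G$. You instead run a global double count: $\ell m \geq n$ from the partition of the $nt$ vertices of $G\boxtimes K_t$ into $m$ components of size at most $\ell t$, versus $(\ell+1)m \leq e(\mathcal{I}_{G,c}) \leq n+m-1$ from the forest edge bound, and these two bounds on $\ell m$ clash. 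Your version is shorter, avoids the slightly delicate ``at least two exit vertices'' step and the non-backtracking-walk argument, and yields the quantitative inequality $\ell m \leq n-1$ as a bonus; the paper's version is more constructive in spirit (it exhibits the structure that would have to exist). The only loose ends in your write-up are cosmetic: the degenerate case $n=0$ is not ``vacuous'' but simply excluded (there are then no components at all), and one should note explicitly that every monochromatic component is nonempty so that $\deg(v_C)\geq 1$ makes sense; neither affects the argument.
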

\begin{proof}
Suppose to the contrary that every monochromatic component of $G \boxtimes K_{t}$ covers at least $\ell+1$ vertices of $G$. As every vertex in $G$ corresponds to $t$ vertices of $G \boxtimes K_t$ and every monochromatic component contains at most $\ell t$ vertices in $G \boxtimes K_t$, every monochromatic component $C_1$ covers at least two vertices $v_1$ and $v_2$ of $G$ that both are covered by another monochromatic component, say, $C_0$ (respectively $C_2$). 

Notice that the vertex $v_{C_1}$ is both adjacent to $v_1$ and $v_2$ in $\mathcal{I}_{G,c}$ and $v_2$ is also adjacent to $v_{C_2}$. By the same argument, the connected component $C_2$ covers at least two vertices that lie in other components. In particular, it covers a different vertex $v_3 \neq v_2$ that lies in some other monochromatic component $C_3$. As there exists such a vertex for every new monochromatic component, we can construct an infinite walk $v_1 v_{C_1} v_2 v_{C_2} v_3 v_{C_3} \ldots$ in $\mathcal{I}_{G,c}$ where $v_i \neq v_{i+1}$ and $C_{i} \neq C_{i+1}$ for all $i$.

Since $\mathcal{I}_{G,c}$ is acyclic all vertices in this walk must be distinct. Since every other vertex of the sequence lies in $G$, we must have that $G$ has infinitely many vertices, a contradiction.
\end{proof}

\begin{Theorem}\label{clustered ell versie}
 For every graph $G$ and positive integers $t$ and $\ell$, we have
 \[\chi^{\underline{\ell}}(G) = \chi^{\underline{\ell t}}(G \boxtimes K_{t}).\] 
\end{Theorem}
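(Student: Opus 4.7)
The direction $\chi^{\underline{\ell}}(G) \geq \chi^{\underline{\ell t}}(G \boxtimes K_t)$ is immediate by lifting an optimal coloring $f$ of $G$ via $c(v,i) := f(v)$: a monochromatic component of size $k \leq \ell$ in $G$ lifts to one of size $kt \leq \ell t$ in the product. For the harder direction $\chi^{\underline{\ell}}(G) \leq \chi^{\underline{\ell t}}(G \boxtimes K_t)$, I would induct on $|V(G)|$ and transform an $\ell t$-clustered coloring $c$ of $G \boxtimes K_t$ into an $\ell$-clustered coloring of $G$ that introduces no new colors.

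For the inductive step, I would first apply \Cref{no hypercycles} (with parameter $\ell t$) to pass to an $\ell t$-clustered coloring $c_1$ with $c_1\{v\} \subseteq c\{v\}$ for every $v$, with $\mathcal{I}_{G,c_1}$ acyclic, and using no more colors than $c$. By \Cref{small conn component} there is then a monochromatic component $C$ of some colour $a$ whose set of covered vertices $W \subseteq V(G)$ satisfies $1 \leq |W| \leq \ell$. I would assign colour $a$ to every vertex of $W$ in $G$ and recursively colour $G - W$ starting from the restricted coloring $c_1|_{(G-W) \boxtimes K_t}$, which remains $\ell t$-clustered on strictly fewer ground vertices.

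The main obstacle will be verifying that the combined coloring of $G$ is $\ell$-clustered. The key observation to secure this is that if $v \in V(G) \setminus W$ is adjacent in $G$ to some $w \in W$, then $a \notin c_1\{v\}$: otherwise $(v,i)$ and any $(w,j) \in C$ would be adjacent in $G \boxtimes K_t$ with the same colour, forcing $(v,i) \in C$ and hence $v \in W$. Maintaining the invariant that the recursive coloring assigns each $v$ a colour from $c_1\{v\}$ therefore prevents any $G$-neighbour of $W$ outside $W$ from receiving colour $a$. Combined with the fact that $G[W]$ is connected---because $C$ is connected in $G \boxtimes K_t$ and projecting a path in $C$ onto its first coordinates gives a walk in $G$---this shows that $W$ is an entire monochromatic component of colour $a$ in the final coloring of $G$, of size at most $\ell$; all other monochromatic components lie inside $G - W$ and have size at most $\ell$ by induction. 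The delicate aspects are (i) checking the colour-set invariant through the recursion, and (ii) ruling out that some new colour-$a$ component of $G - W$ attaches itself to $W$; both reduce to the key observation above.
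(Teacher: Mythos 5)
Your proposal is correct and follows essentially the same route as the paper: prune the colouring via \Cref{no hypercycles}, repeatedly extract a monochromatic component covering at most $\ell$ vertices via \Cref{small conn component}, assign its colour to the covered set, and delete. Your explicit "key observation" (that no $G$-neighbour of $W$ outside $W$ can retain colour $a$ in its colour set) and the colour-set invariant are exactly the details the paper compresses into its final sentence, so this is the same argument, just stated as an induction rather than an iteration.
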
 
\begin{proof}
We have $\chi^{\underline{\ell}}(G) \geq \chi^{\underline{\ell t}}(G \boxtimes K_t)$ as we can copy every colour of $G$ exactly $t$ times. Therefore, the rest of the proof is dedicated to showing $\chi^{\underline{t}}(G \boxtimes K_t) \geq \chi(G)$.

\Cref{no hypercycles} gives us that $G \boxtimes K_t$ has a colouring $c$ with $\chi^{\underline{\ell t}}(G \boxtimes K_t)$ colours such that $\mathcal{I}_{G,c}$ is acyclic. By \Cref{small conn component}, there exists at least one monochromatic component that covers at most $\ell$ vertices.\\

We will now repeatedly apply the following steps. 
\begin{enumerate}[1)]
 \item Pick an arbitrary monochromatic component of size at most $\ell$ corresponding to some colour $a$. 
 \item Colour all vertices in this monochromatic component with the colour $a$ and delete them afterwards.
\end{enumerate}
If we delete these vertices we obtain a new graph $G'$ and the graph $\mathcal{I}_{G',c}$ is acyclic (as it is a subgraph of $\mathcal{I}_{G,c}$). Moreover, $c$ restricted to  $G'\boxtimes K_t$ is still and $\ell t$-clustered colouring. Hence, by \Cref{small conn component}, there is a monochromatic connected component that covers at most $\ell$ vertices. Hence, we can repeat these steps until we have coloured all vertices.

By definition of our algorithm, every uncoloured vertex has some preliminary colours from an $\ell t$-improper colouring of $G' \boxtimes K_t$. Therefore, we will colour all vertices. Moreover, if two neighbours have the same colour $a$, they must have been in the same connected component of size at most $\ell$. Therefore, we have an $\ell$-clustered colouring of $G$ with $\chi^{\underline{\ell t}}(G \boxtimes K_t)$ colours implying $\chi^{\underline{\ell}}(G) \leq \chi^{\underline{\ell t}}(G \boxtimes K_{t})$.
\end{proof}

As an immediate consequence, we derive \Cref{conjecture d improper} where we have replaced that $t-1$-improper chromatic number with the clustered chromatic number:
\begin{Cor}\label{cluster strong}
 For every graph $G$ and positive integer $t$, we have
 \[\chi(G) = \chi^{\underline{t}}(G \boxtimes K_{t}).\]
\end{Cor}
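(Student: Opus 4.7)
The statement is the case $\ell=1$ of the preceding \Cref{clustered ell versie}, so the plan is simply to invoke that theorem and unpack the specialization. First I would observe that a $1$-clustered colouring is exactly a colouring in which every monochromatic connected component has at most one vertex, which is equivalent to saying that no two adjacent vertices receive the same colour. In other words, a $1$-clustered colouring is the same object as a proper colouring, and therefore $\chi^{\underline{1}}(G) = \chi(G)$ for every graph $G$.

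Next I would apply \Cref{clustered ell versie} with $\ell = 1$ and the given integer $t$. The theorem yields
\[
\chi^{\underline{1}}(G) \;=\; \chi^{\underline{1 \cdot t}}(G \boxtimes K_t) \;=\; \chi^{\underline{t}}(G \boxtimes K_t).
\]
Combining this with the identification $\chi^{\underline{1}}(G) = \chi(G)$ from the previous paragraph gives the desired equality $\chi(G) = \chi^{\underline{t}}(G \boxtimes K_t)$.

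\textbf{Where the difficulty actually lies.} There is essentially no obstacle at this stage: all of the work has already been done in \Cref{no hypercycles}, \Cref{small conn component}, and \Cref{clustered ell versie}, which together reduce any $\ell t$-clustered colouring of $G \boxtimes K_t$ to an $\ell$-clustered colouring of $G$ by acyclifying the auxiliary graph $\mathcal{I}_{G,c}$ and then iteratively peeling off monochromatic components that cover at most $\ell$ vertices of $G$. The only thing to double-check is that this extraction procedure is compatible with $\ell = 1$, i.e.\ that the peeling argument correctly produces proper colour classes rather than merely small-cluster colour classes; but this is automatic from the statement of \Cref{clustered ell versie} with $\ell=1$, since the resulting colouring of $G$ is $1$-clustered by construction.
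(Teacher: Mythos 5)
Your proposal is correct and follows exactly the paper's own proof: specialize \Cref{clustered ell versie} to $\ell=1$ and use the identification of $1$-clustered colourings with proper colourings, so that $\chi^{\underline{1}}(G)=\chi(G)$ and $\chi^{\underline{1\cdot t}}(G\boxtimes K_t)=\chi^{\underline{t}}(G\boxtimes K_t)$. No further comment is needed.
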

\begin{proof}
 We take $\ell=1$ in \Cref{clustered ell versie}. Every graph $G$ satisfies $\chi^1(G)=\chi(G)$, as $1$-clustered and proper are equivalent. Therefore $\chi(G)=\chi^{\underline{t}}(G \boxtimes K_{t})$.
\end{proof}

As a corollary to \Cref{cluster strong}, we obtain the statement for $b$-fold colourings.

\begin{Cor} For every graph $G$ and all positive integers $t$ and $b$, we have
 \[\chi_b(G) = \chi^{\underline{t}}_b(G\boxtimes K_t).\]
\end{Cor}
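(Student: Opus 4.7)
The plan is to reduce the $b$-fold clustered statement to the ordinary clustered statement already proved in \Cref{cluster strong}, using two earlier identities of the paper. Specifically, I will combine the identification between $b$-fold colouring parameters and colouring parameters on a strong product with $K_b$, the commutativity/associativity of the strong product, and \Cref{cluster strong} applied to a lifted graph.

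First, I would invoke \Cref{b fold cluster erin}, which states that $\chi^{\underline{t}}_b(G \boxtimes K_t) = \chi^{\underline{t}}(G \boxtimes K_t \boxtimes K_b)$. Next, since the strong product is commutative and associative, I would rewrite this as $\chi^{\underline{t}}((G \boxtimes K_b) \boxtimes K_t)$. Then I apply \Cref{cluster strong} to the graph $G' := G \boxtimes K_b$, which yields $\chi^{\underline{t}}((G \boxtimes K_b) \boxtimes K_t) = \chi(G \boxtimes K_b)$. Finally, I use the standard identity $\chi(G \boxtimes K_b) = \chi_b(G)$ (recalled in the paper immediately before the definition of the $b$-fold improper/clustered chromatic numbers) to conclude that $\chi^{\underline{t}}_b(G \boxtimes K_t) = \chi_b(G)$.

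Chaining these equalities gives the full argument:
\[
\chi^{\underline{t}}_b(G \boxtimes K_t) \;=\; \chi^{\underline{t}}(G \boxtimes K_t \boxtimes K_b) \;=\; \chi^{\underline{t}}((G \boxtimes K_b) \boxtimes K_t) \;=\; \chi(G \boxtimes K_b) \;=\; \chi_b(G).
\]
I do not expect a meaningful obstacle here, since all the heavy lifting has already been done: \Cref{cluster strong} provides the nontrivial direction, and \Cref{b fold cluster erin} together with $\chi_b(H) = \chi(H \boxtimes K_b)$ merely translates between $b$-fold language and strong-product language. The only minor thing to verify is that the strong product is associative and commutative up to graph isomorphism, which is immediate from its definition on vertex sets of triples, so this gives a clean one-line derivation from \Cref{cluster strong}.
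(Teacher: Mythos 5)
Your proposal is correct and matches the paper's own proof essentially verbatim: both apply \Cref{b fold cluster erin}, commute the factors of the strong product, apply \Cref{cluster strong} to $G \boxtimes K_b$, and finish with the identity $\chi_b(G) = \chi(G \boxtimes K_b)$. No issues.
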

\begin{proof}
Since $\chi_b(G) = \chi(G \boxtimes K_b)$ for every $b$, \Cref{b fold cluster erin} gives that \[\chi^{\underline{t}}_b(G\boxtimes K_t) = \chi^{\underline{t}}(G\boxtimes K_t \boxtimes K_b) = \chi^{\underline{t}}(G\boxtimes K_b \boxtimes K_t)\] by associativity of the strong product. Hence, the result follows from applying \Cref{cluster strong} to the graph $G \boxtimes K_b$. 
\end{proof}

Lastly, we can prove a generalized version of \Cref{clustered ell versie} using lexicographical products denoted by $G[H]$. The \textsl{lexicographical product $G[H]$} has $G \times H$ as vertex set and $(v, i) \sim (w,j)$ if either $v \sim w$ or $v=w$ and $i \sim j$. In particular, $E(G \boxtimes H) \subseteq E(G[H])$ and there is an equality if and only if $H$ is a complete graph.

\begin{Cor}\label{lexi prod} Let $H$ be a graph on $t$ vertices. Then for every positive integer $\ell$ and graph $G$ we have
 \[\chi^{\underline{\ell}}(G) = \chi^{\underline{\ell t}}(G[H]).\]  
\end{Cor}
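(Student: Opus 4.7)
The plan is to prove Corollary \ref{lexi prod} by directly chaining two previously established results, namely Lemma \ref{lemma lexi prod} and Theorem \ref{clustered ell versie}. No new combinatorial work is required; the statement essentially falls out by matching parameters correctly.

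First, I would apply Lemma \ref{lemma lexi prod} in order to replace the lexicographic product by a strong product with a complete graph. Since $H$ has $t$ vertices and the clustered parameter $\ell t$ satisfies $\ell t \geq t$ for every positive integer $\ell$, the lemma applies with $k = t$ and with clustering parameter $\ell t$, giving
\[
\chi^{\underline{\ell t}}(G[H]) \;=\; \chi^{\underline{\ell t}}(G \boxtimes K_t).
\]
This step uses only that adjacencies between distinct ``fibres'' of the product coincide in $G[H]$ and in $G\boxtimes K_t$, and that any intra-fibre enlargement of a monochromatic component from the former to the latter stays within the allowed cluster size, which is precisely what the lemma established.

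Second, I would apply Theorem \ref{clustered ell versie} to the right-hand side, with the same parameters $\ell$ and $t$, to obtain
\[
\chi^{\underline{\ell t}}(G \boxtimes K_t) \;=\; \chi^{\underline{\ell}}(G).
\]
Combining the two equalities yields $\chi^{\underline{\ell}}(G) = \chi^{\underline{\ell t}}(G[H])$, as desired.

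There is no genuine obstacle here: the only thing to verify is that the hypothesis $t \geq k$ of Lemma \ref{lemma lexi prod} is satisfied in the form in which we invoke it, which follows from $\ell \geq 1$. So the proof is just two lines of rewriting, with Theorem \ref{clustered ell versie} doing all the substantive work and Lemma \ref{lemma lexi prod} serving to translate between the two product constructions.
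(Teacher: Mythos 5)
Your proof is correct and is essentially identical to the paper's: both apply Lemma \ref{lemma lexi prod} with $k=t$ and clustering parameter $\ell t \geq t$ to replace $G[H]$ by $G \boxtimes K_t$, and then invoke Theorem \ref{clustered ell versie}. Nothing further is needed.
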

\begin{proof}
Since $\ell t \geq t$ for all positive integers $\ell$, we have by Lemma \ref{lemma lexi prod} and Theorem \ref{clustered ell versie} that 
 \[\chi^{\underline{\ell t}}(G[H]) = \chi^{\underline{\ell t}}(G \boxtimes K_t) = \chi^{\underline{\ell}}(G). \qedhere\]
\end{proof}

\appendix

\section{Generalizing bounds of Wocjan and Elphick}\label{More bounds}
The spectral bounds from \textsl{`Unified spectral bounds on the chromatic number'} \cite{elphick2015unified} can be easily generalized to hold for the $d$-improper chromatic number. In this section, we will give a proof of \Cref{All Wocjan Elphick bounds} that follows roughly the same reasoning as the proof in their paper. 

We use $\lambda^{\downarrow}_i(A)$ to denote the $i$-th largest eigenvalue of a Hermitian matrix $A$. Moreover, we will use the following result about Hermitian matrices that is often used in majorization theory \cite{bhatia2013matrix}.

 Let $A_1, \ldots, A_l$ be arbitrary $n \times n$ Hermitian matrices, then for all $m \geq 0$ we have that
\[\sum_{i=1}^m \sum_{j=1}^l \lambda_i(A_j) \geq \sum_{i=1}^m \lambda_i\left(\sum_{j=1}^l A_j\right). \]

\begin{Lemma}\label{lemma wocjan elphick}
Let $G$ be a graph such that $\chi^d(G) = c$ and let $A$ be the adjacency matrix of $G$. Then for any real diagonal matrix $X$ 
\[\sum_{i=1}^m \lambda_i^{\downarrow}(X-A) \geq \sum_{i=1}^m \lambda_i^{\downarrow}\left(X+\frac{A}{c-1}\right) -\frac{cdm}{c-1} \]
for all $m=1,\ldots, n$.
\end{Lemma}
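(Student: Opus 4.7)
The plan is to mimic the Wocjan--Elphick proof, with a careful modification to accommodate the block-diagonal part arising from the monochromatic subgraphs.

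First, fix a $d$-improper colouring with colour classes $V_1,\ldots,V_c$ and let $P_j$ be the diagonal $0/1$ projection onto the coordinates indexed by $V_j$. Split $A$ into its ``block-diagonal'' part and its ``off-block'' part by writing $D := \sum_{j=1}^c P_j A P_j$. Since each $P_j A P_j$ is the adjacency matrix of $G[V_j]$ (padded with zeros) and $G[V_j]$ has maximum degree at most $d$, every eigenvalue of $D$ lies in $[-d,d]$, so $\lambda_i^{\downarrow}(cD)\le cd$ for every $i$.

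Next I would introduce the standard ``root-of-unity'' unitaries. Let $\omega=e^{2\pi i/c}$ and set $U_k := \sum_{j=1}^c \omega^{kj}P_j$ for $k=0,1,\ldots,c-1$. Each $U_k$ is a diagonal unitary and, crucially, commutes with the diagonal matrix $X$, so $U_k^\ast X U_k=X$. Expanding and using $\sum_{k=0}^{c-1}\omega^{k(l-j)} = c\cdot\mathbf{1}[j=l]$ gives the identity
\begin{equation*}
\sum_{k=0}^{c-1} U_k^\ast A\, U_k \;=\; c\sum_{j=1}^c P_j A P_j \;=\; cD.
\end{equation*}
Subtracting the $k=0$ term and combining with $\sum_{k=1}^{c-1} U_k^\ast X U_k = (c-1)X$ yields the key operator identity
\begin{equation*}
\sum_{k=1}^{c-1} U_k^\ast (X-A)\, U_k \;=\; (c-1)X - (cD - A) \;=\; (c-1)\!\left(X+\tfrac{A}{c-1}\right) - cD.
\end{equation*}

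Now I would finish by two applications of the Ky Fan / majorization inequality for sums of Hermitian matrices quoted just before the lemma. First, applied to the left-hand side above (and using that unitary conjugation preserves eigenvalues), it gives
\begin{equation*}
\sum_{i=1}^m \lambda_i^{\downarrow}\!\left((c-1)\!\left(X+\tfrac{A}{c-1}\right) - cD\right) \;\le\; (c-1)\sum_{i=1}^m \lambda_i^{\downarrow}(X-A).
\end{equation*}
Second, writing $(c-1)(X+\tfrac{A}{c-1}) = \bigl[(c-1)(X+\tfrac{A}{c-1})-cD\bigr] + cD$ and applying the same inequality in the form $\sum_{i=1}^m \lambda_i^{\downarrow}(P+Q)\le\sum_{i=1}^m\lambda_i^{\downarrow}(P)+\sum_{i=1}^m\lambda_i^{\downarrow}(Q)$, together with $\sum_{i=1}^m \lambda_i^{\downarrow}(cD)\le cdm$, we obtain
\begin{equation*}
(c-1)\sum_{i=1}^m \lambda_i^{\downarrow}\!\left(X+\tfrac{A}{c-1}\right) - cdm \;\le\; \sum_{i=1}^m \lambda_i^{\downarrow}\!\left((c-1)\!\left(X+\tfrac{A}{c-1}\right) - cD\right).
\end{equation*}
Chaining the two inequalities and dividing by $c-1$ gives exactly the claimed bound.

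The only genuinely new ingredient compared with Wocjan--Elphick is locating the correct algebraic identity: in the proper case $(d=0)$ one has $D=0$ and $\sum_{k=1}^{c-1} U_k^\ast A U_k = -A$, whereas here the residual $cD$ must be carried along and then absorbed using the crude $\|D\|\le d$ estimate in the last step. Choosing to form the combination $X+A/(c-1)$ on the right (rather than something involving $D$) is what makes the inequality useful downstream, and this choice is dictated precisely by the identity displayed above. Everything else is bookkeeping.
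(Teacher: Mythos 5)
Your proof is correct and follows essentially the same route as the paper's: the same root-of-unity diagonal unitaries, the same identity expressing $\sum_k U_k^\ast(X-A)U_k$ in terms of the block-diagonal part of $A$ (your $cD$ is the paper's matrix $B$), the same spectral-radius bound $\lambda_1^{\downarrow}(cD)\le cd$ coming from the maximum-degree-$d$ condition, and the same two applications of the Ky Fan majorization inequality. No substantive differences.
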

\begin{proof}
Let $\phi: G \rightarrow [c]$ be a $d$-improper colouring and let $\zeta$ be a primitive $c$-th root of unity. For $s=1,\ldots c$ we define the $n \times n$-matrices $U_s = \diag(\zeta^{\phi(1) \cdot s}, \ldots, \zeta^{\phi(n) \cdot s})$. Moreover, we define $B = \sum_{s=1}^c U_s A U_s^\dag$.

Multiplication of $A$ by the diagonal unitary matrix $U_s$ from the left corresponds to the multiplication of the $k$th row of $A$ by the $k$th diagonal entry of $U_s$. Moreover, the multiplication of $A$ by the diagonal unitary matrix $U_s^\dag$ equals the multiplication of the $l$th column of $A$ by the $l$th diagonal entry of $U_s$. Hence, 
\[B_{kl} = \sum_{s=1}^c A_{kl} \zeta^{(\phi(k)-\phi(l)) s} \]
for $k,l=1, \ldots, n$.
If $\phi(k)\neq \phi(l)$ we have $\sum_{s=1}^c \zeta^{(\phi(k)-\phi(l))s}=0$ and thus $B_{kl}=0$. If $\phi(k) = \phi(l)$ then $\sum_{s=1}^c \zeta^{(\phi(k)-\phi(l))s}=\sum_{s=1}^c 1=c$. In conclusion, 
\[B_{kl} = \begin{cases} c & \text{if } \phi(k)=\phi(l) \text{ and } k \sim l\\
 0 & \text{else}\end{cases}.\]
As $\phi$ is a $d$-improper colouring, $B$ has at most $d$-non zero entries per row and column and those are all equal to $c$. Hence, $\lambda_1(B) \leq cd$. Lastly, as $U_{c}=I$, we obtain that $\sum_{s=1}^{c-1} -U_sAU_s^\dag=A-B$.

Let $X$ be any real diagonal matrix. As $U_s$ and $U_s^{\dag}$ are also diagonal matrices, we see that $X, U_s$ and $U_s^{\dag}$ commute. Therefore,
\[\sum_{s=1}^{c-1} U_s(X-A)U_s^{\dag} = (c-1)X+A-B.\]

 Since $X, A, U_s$ and $U_s^{\dag}$ are all Hermitian matrices, we can apply the majorization principle to obtain
 \begin{align*}(c-1)\sum_{i=1}^m \lambda_i^{\downarrow}(X-A) &= \sum_{i=1}^m \sum_{s=1}^{c-1} \lambda_i^{\downarrow}(U_s(X-A)U_s^{\dag})\\ 
 &\geq \sum_{i=1}^m \lambda_i^{\downarrow}\left((c-1)X+(A-B)\right)\\
 &\geq \sum_{i=1}^m \lambda_i^{\downarrow}\left((c-1)X+A)\right)-\sum_{i=1}^m \lambda_i(B)\\
 &\geq (c-1)\sum_{i=1}^m \lambda_i^{\downarrow}\left(X+\frac{1}{c-1}A\right)-cdm. \end{align*}
Dividing by $c-1$ yields the result.
\end{proof}

\begin{Cor}\label{Cor wocjan important}
 Let $A$ be the adjacency matrix of a graph $G$. Then for any real diagonal matrix $X$ and integer $d\geq 0$, we have
 \[ \chi^d(G) \geq 1+\frac{-dm+\sum_{i=1}^m \lambda_i^{\downarrow}(A)}{dm+ \sum_{i=1}^m \lambda_i^{\downarrow}(X-A)+ \lambda_i^{\downarrow}(A)- \lambda_i^{\downarrow}(X+A)}\]
 for all $m=1, \ldots, n$.
\end{Cor}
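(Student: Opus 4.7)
Write $c := \chi^d(G)$ and, for any Hermitian matrix $M$, abbreviate $S_M := \sum_{i=1}^m \lambda_i^{\downarrow}(M)$. The plan is to combine \Cref{lemma wocjan elphick} with a single algebraic identity and one application of Ky~Fan's inequality (the $l=2$ instance of the majorization principle recalled just before \Cref{lemma wocjan elphick}). First, multiplying the conclusion of \Cref{lemma wocjan elphick} through by $c-1$ clears the denominator on the right and gives
\[
(c-1)\,S_{X-A} + cdm \;\geq\; S_{(c-1)X+A}.
\]
The matrix $(c-1)X+A$ on the right is almost, but not quite, a scalar multiple of $X+A$; since the corollary's bound is phrased in terms of $X+A$ and $A$ separately, we need to split $(c-1)X+A$ into a $X+A$-part and an $A$-part.

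The key identity is
\[
(c-1)(X+A) \;=\; \bigl((c-1)X+A\bigr) + (c-2)\,A,
\]
after which Ky~Fan's subadditivity of the top-$m$ eigenvalue sum applied to the two Hermitian summands on the right yields $(c-1)\,S_{X+A} \leq S_{(c-1)X+A} + (c-2)\,S_A$, equivalently $S_{(c-1)X+A} \geq (c-1)\,S_{X+A} - (c-2)\,S_A$. Chaining this with the rewritten lemma produces
\[
(c-1)\,S_{X-A} + cdm + (c-2)\,S_A \;\geq\; (c-1)\,S_{X+A}.
\]

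Finally, setting $D := dm + S_{X-A} + S_A - S_{X+A}$, the display above rearranges---after distributing the factor $c-1$ and isolating $S_A - dm$ on one side---to $(c-1)\,D \geq S_A - dm$. Provided $D>0$, which is the meaningful regime since otherwise the stated bound is either trivial or vacuous, dividing by $D$ recovers precisely $\chi^d(G)=c \geq 1 + (S_A - dm)/D$, as required. The only nontrivial step is spotting the decomposition $(c-1)(X+A) = ((c-1)X+A) + (c-2)\,A$, which simultaneously introduces $X+A$ and isolates the correction as a scalar multiple of $A$; everything else is subadditivity of partial eigenvalue sums plus routine algebra, and the degenerate cases (e.g.\ $c=1$, where $G$ has maximum degree at most $d$, or $D \leq 0$) amount to bookkeeping.
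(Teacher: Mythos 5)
Your proposal is correct and is essentially the paper's own argument: the identity $(c-1)(X+A)=\bigl((c-1)X+A\bigr)+(c-2)A$ is exactly the paper's decomposition $X+\tfrac{A}{c-1}=(X+A)-\tfrac{c-2}{c-1}A$ scaled by $c-1$, followed by the same single application of Ky Fan subadditivity and the same rearrangement. (You are slightly more careful than the paper in flagging the degenerate cases $c=1$ and a nonpositive denominator, which the paper also implicitly assumes away.)
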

\begin{proof}
 First, let $c= \chi^d(G)$. We now write $X+\frac{1}{c-1}A= X+A - \frac{c-2}{c-1}A$. By combining this with the previous \Cref{lemma wocjan elphick} and majorization principle for Hermitian matrices, we obtain
 \begin{align*}\sum_{i=1}^m \lambda_i^{\downarrow}(X-A) &\geq \sum_{i=1}^m \lambda_i^{\downarrow}\left(X+\frac{A}{c-1}\right)-\frac{cdm}{c-1}\\
 &= \sum_{i=1}^m \lambda_i^{\downarrow}\left(X+A-\frac{c-2}{c-1}A)\right)-\frac{cdm}{c-1}\\
 &\geq \sum_{i=1}^m \lambda_i^{\downarrow}(X+A) - \frac{c-2}{c-1}\sum_{i=1}^m \lambda_i(A)-\frac{cdm}{c-1}.
 \end{align*}
 After multiplying everything by $c-1$, we can eliminate $c$ to obtain
 \[c-1 \geq \frac{-dm+ \sum_{i=1}^m \lambda_i^{\downarrow}(A)}{dm+ \sum_{i=1}^m \lambda_i^{\downarrow}(X-A)+ \lambda_i^{\downarrow}(A)- \lambda_i^{\downarrow}(X+A)}. \qedhere\]
\end{proof}

With this Lemma, we can prove Equation \ref{generalization Wocjan} which is the $d$-improper version of the bound proven in \cite{wocjan2013new}:
\begin{Theorem}\label{gen wocjan}
 For every graph $G$ and positive integers $d$ and $m \leq n$ we have \[\chi^d(G) \geq 1+ \frac{-dm + \sum_{i=1}^m \lambda_i}{dm-\sum_{i=1}^m \lambda_{n+1-i}}.\]
\end{Theorem}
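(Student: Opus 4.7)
The plan is to derive this bound as a direct specialization of \Cref{Cor wocjan important}, taking the free diagonal parameter $X$ to be the zero matrix. Since the corollary is already proved and delivers a family of lower bounds on $\chi^d(G)$ parameterized by an arbitrary real diagonal matrix $X$, the remaining work is to choose $X$ cleverly and to simplify the three eigenvalue sums appearing in its denominator.

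Concretely, I would substitute $X = 0$ in \Cref{Cor wocjan important}. Then $X-A = -A$ and $X+A = A$, so the three quantities appearing in the denominator become $\lambda_i^{\downarrow}(-A)$, $\lambda_i^{\downarrow}(A)$, and $\lambda_i^{\downarrow}(A)$. Using the elementary identity $\lambda_i^{\downarrow}(-A) = -\lambda_{n+1-i}^{\downarrow}(A) = -\lambda_{n+1-i}$ (which follows because negation reverses the order of eigenvalues of a Hermitian matrix), one has
\begin{align*}
\sum_{i=1}^m \bigl(\lambda_i^{\downarrow}(X-A) + \lambda_i^{\downarrow}(A) - \lambda_i^{\downarrow}(X+A)\bigr)
 &= \sum_{i=1}^m \bigl(-\lambda_{n+1-i} + \lambda_i - \lambda_i\bigr)\\
 &= -\sum_{i=1}^m \lambda_{n+1-i}.
\end{align*}
Hence the denominator from \Cref{Cor wocjan important} becomes $dm - \sum_{i=1}^m \lambda_{n+1-i}$, while the numerator is already $-dm + \sum_{i=1}^m \lambda_i$, yielding exactly the claimed inequality.

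There is essentially no obstacle in this step beyond correctly tracking the three sums; the heavy lifting (the unitary twisting argument using roots of unity, the bound $\lambda_1(B) \le cd$ coming from the $d$-improperness of the colouring, and the two applications of the Lidskii/majorization principle) has already been absorbed into \Cref{lemma wocjan elphick} and \Cref{Cor wocjan important}. The only small subtlety to double-check is the sign of the denominator: one should observe that $-\sum_{i=1}^m \lambda_{n+1-i} \geq 0$ (since the bottom eigenvalues of $A$ are non-positive whenever $G$ has at least one edge), so that together with $dm \ge 0$ the denominator is positive and the inequality is genuinely informative.
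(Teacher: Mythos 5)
Your proposal is correct and is exactly the paper's own proof: substitute $X=0$ into \Cref{Cor wocjan important} and use $\lambda_i^{\downarrow}(-A) = -\lambda_{n+1-i}$ to simplify the denominator to $dm - \sum_{i=1}^m \lambda_{n+1-i}$. The additional remark about the positivity of the denominator is a reasonable sanity check but not needed for the derivation.
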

\begin{proof}
Plug in $X=0$ into \Cref{Cor wocjan important}. Then we obtain
 \[c \geq 1+ \frac{\sum_{i=1}^m \lambda_i(A) -dm}{\sum_{i=1}^m\lambda_i(-A)+dm} = 1+ \frac{-dm + \sum_{i=1}^m \lambda_i}{dm-\sum_{i=1}^m \lambda_{n+1-i}}. \qedhere \]
\end{proof}
This bound becomes the $d$-improper Hoffman bound when we plug in $m=1$. Next, we will give an example that shows that for every $m$ there exist graphs for which this bound is (slightly) better than the $d$-improper Hoffman bound.

\begin{Remark}
 Let $G$ be a bipartite graph. Let $G \boxtimes K_{n}$. For $k$ satisfying $\lambda_k \geq 1$, the largest $k$ eigenvalues of $G \boxtimes K_n$ are equal to $n\lambda_i+(n-1)$ while the smallest $k$ eigenvalues are equal to $-n(\lambda_i)+n-1$. Hence, for $m=k$ the Wocjan fraction equals $\frac{2n(\lambda_1+\ldots \lambda_k)}{k(d-n+1)+n(\lambda_1+\ldots+ \lambda_k)}$. 

  If $\lambda_k \geq 1$, we have $(k-1)\lambda_k < \lambda_1+\ldots +\lambda_{k-1}$ as the largest eigenvalue has multiplicity 1. Hence, if $d< n-1$ we find
 \[2n\lambda_k((k-1)(d-n+1) > 2n(\lambda_1+\ldots + \lambda_k)(d-n+1)\]
implying
 \[2n\lambda_k((k-1)(d-n+1)+n(\lambda_1+\ldots \lambda_{k-1}) > 2n(d-n+1+n\lambda_k)(\lambda_1+\ldots+ \lambda_{k-1})\]
 and thus
 \[\frac{2n(\lambda_1+\ldots \lambda_k)}{k(d-n+1)+n(\lambda_1+\ldots+ \lambda_k)} > \frac{2n(\lambda_1+\ldots \lambda_{k-1})}{(k-1)(d-n+1)+n(\lambda_1+\ldots+ \lambda_{k-1})}.\]

Hence, for these graphs, the generalized Wocjan bound is strictly better than the Hoffman bound.
 \end{Remark}

Recall that if $D$ is the diagonal matrix with the degrees of the vertices of $G$ as its entries, then $L=D-A$ is the Laplacian of $G$ and $Q=D+A$ is the signless Laplacian of $G$. The eigenvalues of $L$ are denoted by $\mu_1 \geq \ldots \geq \mu_n = 0$ and the eigenvalues of $Q$ by $\theta_1 \geq \ldots \geq \theta_n \geq 0$. By plugging in $X=D$, we obtain Equation \ref{gen Kolotilina} which is a generalization of bound $(7)$ in \cite{elphick2015unified}.
\begin{Cor}
 For every graph $G$ and positive integers $d$ and $m \leq n$ we have 
 \[ \chi^d(G) \geq 1+\frac{-dm+ \sum_{i=1}^m \lambda_i}{dm+ \sum_{i=1}^m (\lambda_i+\mu_i- \theta_i)}\]
 for all $m=1, \ldots, n$.
\end{Cor}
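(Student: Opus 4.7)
The plan is to derive this corollary as a direct application of Corollary \ref{Cor wocjan important} with the specific choice $X=D$, where $D$ is the diagonal degree matrix of $G$. Since $D$ is a real diagonal matrix, the hypothesis of Corollary \ref{Cor wocjan important} is trivially satisfied, so the only work is to identify the three spectral quantities appearing in the inequality under this substitution.

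With $X = D$, observe that $X - A = D - A = L$ is the Laplacian of $G$ and $X + A = D + A = Q$ is its signless Laplacian. Hence $\lambda_i^{\downarrow}(X-A) = \mu_i$ and $\lambda_i^{\downarrow}(X+A) = \theta_i$ by definition of $\mu_i$ and $\theta_i$. Substituting these into the inequality
\[ \chi^d(G) \geq 1+\frac{-dm+\sum_{i=1}^m \lambda_i^{\downarrow}(A)}{dm+ \sum_{i=1}^m \lambda_i^{\downarrow}(X-A)+ \lambda_i^{\downarrow}(A)- \lambda_i^{\downarrow}(X+A)} \]
provided by Corollary \ref{Cor wocjan important}, and collecting the summands in the denominator as $\lambda_i + \mu_i - \theta_i$, yields exactly the claimed bound.

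Since all heavy lifting (the majorization-based comparison of $\sum \lambda_i^{\downarrow}(X-A)$ and $\sum \lambda_i^{\downarrow}(X+\tfrac{A}{c-1})$ in Lemma \ref{lemma wocjan elphick}, and the subsequent rewrite in Corollary \ref{Cor wocjan important}) has already been done, no further technical obstacle arises. The proof is essentially a one-line substitution; the only point worth noting is that the choice $X=D$ is the minimal one for which the denominator simplifies into Laplacian and signless Laplacian eigenvalues, which is precisely what makes this specialization the natural $d$-improper analogue of Kolotilina's bound.
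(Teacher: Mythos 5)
Your proof is correct and matches the paper's own argument exactly: the paper likewise obtains this bound by substituting $X=D$ into the preceding corollary, so that $X-A=L$ and $X+A=Q$ yield the eigenvalues $\mu_i$ and $\theta_i$ in the denominator. Nothing further is needed.
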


Using that $\theta_i \geq \lambda_i$ for all $i$, we get Equation \ref{generalization Nikoforov}. Lastly, when we plug in $X=-D$, we obtain Equation \ref{generalization Kolotilina} which is a generalization of bound $(8)$ in \cite{elphick2015unified}.
\begin{Cor}
 For every graph $G$ and positive integers $d$ and $m \leq n$ we have 
 \[ \chi^d(G) \geq 1+\frac{-dm+ \sum_{i=1}^m \lambda_i}{dm+ \sum_{i=1}^m (\lambda_i + \mu_{n+1-i} - \theta_{n+1-i})}\]
 for all $m=1, \ldots, n$.
\end{Cor}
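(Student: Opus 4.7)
The plan is to derive the desired inequality as a direct specialization of Corollary~\ref{Cor wocjan important} by choosing the diagonal matrix $X$ appropriately. Since the general corollary yields a lower bound on $\chi^d(G)$ in terms of $\sum_{i=1}^m \lambda_i^\downarrow(X-A)$ and $\sum_{i=1}^m \lambda_i^\downarrow(X+A)$ for any real diagonal $X$, and since the Laplacian $L$ and signless Laplacian $Q$ are defined by $L=D-A$ and $Q=D+A$, the natural choice is to take $X$ equal to $\pm D$ so that these combinations appear up to a sign.

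Specifically, I would set $X=-D$. Then $X-A = -(D+A) = -Q$ and $X+A = -(D-A) = -L$. Because negating a Hermitian matrix reverses the order of its eigenvalues, I have
\[\lambda_i^\downarrow(-Q) = -\lambda_{n+1-i}^\downarrow(Q) = -\theta_{n+1-i}
\quad\text{and}\quad
\lambda_i^\downarrow(-L) = -\lambda_{n+1-i}^\downarrow(L) = -\mu_{n+1-i}.\]
Substituting these identifications into the denominator of Corollary~\ref{Cor wocjan important} transforms
\[dm + \sum_{i=1}^m \bigl(\lambda_i^\downarrow(X-A) + \lambda_i^\downarrow(A) - \lambda_i^\downarrow(X+A)\bigr)\]
into
\[dm + \sum_{i=1}^m \bigl(-\theta_{n+1-i} + \lambda_i + \mu_{n+1-i}\bigr),\]
which is exactly the denominator appearing in Equation~\eqref{generalization Kolotilina}. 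The numerator $-dm + \sum_{i=1}^m \lambda_i^\downarrow(A) = -dm + \sum_{i=1}^m \lambda_i$ is already in the claimed form, so the stated bound follows immediately.

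There is essentially no obstacle here, as the entire argument reduces to one substitution plus the elementary observation that $\lambda_i^\downarrow(-M)=-\lambda_{n+1-i}^\downarrow(M)$ for Hermitian $M$. The only care needed is in the index bookkeeping to make sure the sign reversal on the spectra of $-Q$ and $-L$ correctly produces the reversed indices $\theta_{n+1-i}$ and $\mu_{n+1-i}$ appearing in \eqref{generalization Kolotilina}, as opposed to the non-reversed indices $\theta_i$ and $\mu_i$ that would arise from the alternative choice $X=+D$ used in the proof of \eqref{gen Kolotilina}.
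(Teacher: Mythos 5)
Your proposal is correct and is exactly the paper's route: the paper also obtains \eqref{generalization Kolotilina} by substituting $X=-D$ into Corollary~\ref{Cor wocjan important}, so that $X-A=-Q$ and $X+A=-L$. Your explicit index bookkeeping via $\lambda_i^{\downarrow}(-M)=-\lambda_{n+1-i}^{\downarrow}(M)$ is the only detail the paper leaves implicit, and you have it right.
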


\end{document}